\title{\textbf{Vector bundles whose restriction to a linear section is Ulrich}}
\author{
	Rajesh S.~Kulkarni 
		\footnote{Michigan State University, East Lansing, Michigan. {\sf kulkarni@math.msu.edu}},
	Yusuf Mustopa 
		\footnote{Tufts University, Medford, Massachusetts. {\sf Yusuf.Mustopa@tufts.edu}}, and 
	Ian Shipman 
		\footnote{University of Utah, Salt Lake City, Utah. {\sf ian.shipman@gmail.com}}
}
\begin{document}
\maketitle 

\begin{abstract}
An Ulrich sheaf on an $n-$dimensional projective variety $X \subseteq \mathbb{P}^{N}$ is an initialized ACM sheaf which has the maximum possible number of global sections.   Using a construction based on the representation theory of Roby-Clifford algebras, we prove that every normal ACM variety admits a reflexive sheaf whose restriction to a general 1-dimensional linear section is Ulrich; we call such sheaves $\delta-$Ulrich.  In the case $n=2,$ where $\delta-$Ulrich sheaves satisfy the property that their direct image under a general finite linear projection to $\P^2$ is a semistable instanton bundle on $\mathbb{P}^{2}$, we show that some high Veronese embedding of $X$ admits a $\delta-$Ulrich sheaf with a global section. 
\end{abstract}

\section*{Introduction}
The structure theory of ACM sheaves on a subvariety $X \subseteq \mathbb{P}^{N}$ is an important and actively studied area of algebraic geometry.  Ulrich sheaves are the ``nicest possible" ACM sheaves on $X,$ since their associated Cohen-Macaulay module has the maximum possible number of generators, they are closed under extensions (they form an abelian subcategory of ${\rm Coh}(X)$), and their Hilbert series is completely determined by their rank and $\deg(X).$  Moreover, they are all Gieseker-semistable.  

Ulrich sheaves on $X$ admit a clean geometric characterization; they are precisely the sheaves whose direct image under a general finite linear projection $\pi : X \to \P^{n}$ is a trivial vector bundle.  In particular, a sheaf on $\P^n$ is Ulrich with respect to $\cO_{\P^{n}}(1)$ if and only if it is a trivial vector bundle.  Varieties known to admit Ulrich sheaves include curves and Veronese varieties \cite{ESW,H}, complete intersections \cite{HUB}, generic linear determinantal varieties \cite{BHU}, Segre varieties \cite{CMP}, rational normal scrolls \cite{MR}, Grassmannians \cite{CMR}, some flag varieties \cite{CMR,CHW}, generic K3 surfaces \cite{AFO}, abelian surfaces \cite{Bea}, Enriques surfaces \cite{BN} and ruled surfaces \cite{ACM}.  

The question of whether every subvariety of projective space admits an Ulrich sheaf was first posed in \cite{ESW} and remains open.  It was shown in \cite{kms-1} that an affirmative answer is equivalent to the simultaneous solution of a large number of higher-rank Brill-Noether problems on nongeneric curves.  In light of the fact that the varieties known to admit Ulrich sheaves are mostly ACM, a natural first step is to restrict the question to ACM varieties.  

It is straightforward to check that if $\cE$ is an Ulrich sheaf on $X,$ then the restriction of $\cE$ to a general linear section is Ulrich.  The converse holds for linear sections of dimension 2 or greater (Lemma \ref{lem:rest-lin-sec}) but not linear sections of dimension 1 (e.g. Remark \ref{rmk:lin-det-examples}).  In addition, Ulrich sheaves on 1-dimensional linear sections have very recently been used by Faenzi and Pons-Llopis to show that most ACM varieties are of wild representation type \cite{FaPL}.  All this suggests a natural enlargement of the class of Ulrich sheaves whose existence problem may be more tractable.  

\begin{definition*}
Let $\cE$ be a reflexive sheaf on a projective variety $X \subseteq \P^N,$ and let $\cO(1) = \cO_{\P^{N}}(1)|_{X}.$  We say that $\cE$ is \emph{$\delta-$Ulrich} (with respect to $\cO(1)$) if there exists a smooth 1-dimensional linear section $Y$ of $X$ such that the restriction $\cE|_{Y}$ is Ulrich with respect to $\cO_{Y}(1).$ 
\end{definition*}

Perhaps the simplest examples of $\delta-$Ulrich sheaves that are not Ulrich are constructed in Theorem 2.2.5 of \cite{OSS}---they are nontrivial rank-2 bundles on $\P^2$ whose restriction to a general line is trivial.  By definition they are $\delta-$Ulrich with respect to $\cO_{\P^2}(1),$ but their nontriviality means they are not Ulrich.  Though we are not aware of a complete characterization of sheaves on $\P^2$ that are $\delta-$Ulrich with respect to $\cO_{\P^{2}}(1),$ Theorem 17 of \cite{Jar} implies they are all semistable instanton sheaves in the sense of \cite{Jar}.  The connection with instanton sheaves is discussed in Section \ref{subsec:instanton}.

While $\delta-$Ulrich sheaves are a strict generalization of Ulrich sheaves, their existence is not immediate; it is worth mentioning that the examples we present in this paper (specifically, in Remarks \ref{rmk:lin-det-examples} and \ref{rmk:non-loc-free}) are on varieties already known to admit Ulrich sheaves.  Our main result, which is implied by Theorem \ref{thm:reflexive-delta-ulrich}, is the following:  

\begin{customthm}{A}\label{main-theorem}
Let $X \subseteq \mathbb{P}^{N}$ be a normal ACM variety.  Then $X$ admits a $\delta-$Ulrich sheaf.
\end{customthm}

The $\delta-$Ulrich condition for a sheaf $\cF$ on $X$ can be rephrased as saying that if $\pi : X \to \mathbb{P}^{n}$ is a general finite linear projection, the direct image $\pi_{\ast}\cF$ restricts to a trivial vector bundle on a general line $\ell \subseteq \mathbb{P}^{n},$ so to construct a $\delta-$Ulrich sheaf on $X$ amounts to finding a reflexive sheaf $\cE$ on $\mathbb{P}^{n}$ and a line $\ell \subseteq \mathbb{P}^{n}$ such that $\cE$ is a $\pi_{\ast}\cO_{X}-$module and $\cE|_{\ell}$ is a trivial vector bundle on $\ell.$  It suffices to carry this out this construction on an open subset of $\mathbb{P}^{n}$ whose complement is of codimension at least 2.

Lemma \ref{lem:monogenic-opens} implies that if $\pi : X \to \mathbb{P}^{n}$ is a finite linear projection, then there are open affine subsets $V_{1},V_{2} \subseteq \mathbb{P}^{n}$ and polynomials $p_{i}(z_{i}) \in \cO_{V_i}[z_i]$ such that the complement of $V_{1} \cup V_{2}$ is of codimension 2 and $\pi_{\ast}\cO_{X}|_{V_i} \cong \cO_{V_{i}}[z_i]/(p_{i}(z_i)).$  Our strategy for proving Theorem A begins with constructing for $i=1,2$ a locally Cohen-Macaulay sheaf $\cE_i$ on $V_i$ which admits the structure of a $\pi_{\ast}\cO_{X}|_{V_i}$-module.  What allows us to do this is the notion of a \textit{characteristic morphism} of (sheaves of) algebras.  Such morphisms generalize algebra homomorphisms in the sense that they respect the Cayley-Hamilton theorem; see Section \ref{subsec:roby-char} for details, as well as \cite{kms-2}.  Although we are not aware of any earlier work on characteristic morphisms as such, we were inspired by the use of characteristic polynomials in \cite{Pa}.  For similar ideas in the context of invariant theory, see \cite{Pr}.   

It is not obvious that $\cE_1$ and $\cE_2$ glue together to form a $\pi_{\ast}\cO_{X}|_{V_{1} \cup V_{2}}-$module.  However, the special characteristic morphism we construct in Proposition \ref{prop:acm-fil-ps} ensures that the restrictions of $\cE_1$ and $\cE_2$ to a general line $\ell \subseteq V_1 \cup V_2$ glue together to form an Ulrich sheaf for the restriction $\pi^{-1}(\ell) \to \ell$ of $\pi.$  The $\delta-$Ulrich sheaf we produce is an algebraization of a sheaf on the formal neighborhood of $\ell$ which comes from gluing completions of $\cE_1$ and $\cE_2$ along this neighborhood (Lemma \ref{lem:algebrize} and Theorem \ref{thm:reflexive-delta-ulrich}).

Even though it is not used explicitly, the central concept underlying the proof of Proposition \ref{prop:acm-fil-ps} is that of the Roby-Clifford algebra $R_F$ of a degree-$d$ homogeneous form $F$ over a field $\bk.$  This was introduced by Roby in \cite{Ro}, and it directly generalizes the classical Clifford algebra of a quadratic form, as $R_F$ satisfies a similar, higher-degree universal property (see Remark \ref{rmk:def-roby-clifford}).  It is shown in \cite{VDB} that Ulrich sheaves on the cyclic covering hypersurface $\{w^{d} = F\}$ correspond to finite-dimensional $R_{F}-$modules, and a more refined correspondence involving the natural $\Z/d\Z$-grading on $R_F$ is used in \cite{BHS} to construct Ulrich sheaves on hypersurfaces.  The latter construction uses the \textit{$\Z/d\Z$-graded tensor product} of modules over Roby-Clifford algebras (see Section \ref{prelim:subsec:twisted-tensor}) to construct an Ulrich sheaf over the zero locus of the ``generic homogeneous form of degree $d$ which is a sum of $s$ monomials."  Our proof of Proposition \ref{prop:acm-fil-ps} uses $\Z/d\Z$-graded tensor products to extend an algebraic object (the characteristic morphism) from the line $\ell \subset \P^n$ to all of $\P^n$. 

We can say more about $\delta-$Ulrich sheaves when $X$ is a normal ACM surface.  It is immediate from the definition that $\delta-$Ulrich sheaves on normal ACM surfaces are locally Cohen-Macaulay, a necessary condition for being Ulrich.  As mentioned earlier, the sheaves on $\P^2$ which are $\delta-$Ulrich with respect to $\cO_{\mathbb{P}^{2}}(1)$ are semistable instanton sheaves, so in general, $\delta-$Ulrich sheaves on a surface have the property that their direct image under a finite linear projection is a semistable instanton sheaf (Proposition \ref{prop:surface-instanton}).  We show that the intermediate cohomology module $H^{1}_{\ast}(\cE)$ satisfies the Weak Lefschetz property (Proposition \ref{prop:coh-mod-WLP}); moreover, the maximum value of the Hilbert function of $H^{1}_{\ast}(\cE)$ is $h^{1}(\cE(-1))$.   

A substantial difference between Ulrich and $\delta-$Ulrich sheaves is that the former are globally generated, while the latter need not have any global sections at all (compare Remark \ref{rmk:lin-det-examples}).  However, a $\delta-$Ulrich sheaf $\cE$ on $X$ is Ulrich if and only if it has ${\rm deg}(X) \cdot {\rm rk}(\cE)$ global sections (see Proposition \ref{prop:ulrich-equiv-conditions}).  If we replace $\cO_{X}(1)$ by a potentially high twist, we have enough control on the cohomology to obtain the following result.

\begin{customthm}{B}
If $X \subseteq \mathbb{P}^{N}$ is a smooth ACM surface, there exists $k > 0$ such $X$ admits a $\delta-$Ulrich sheaf with respect to $\cO_{X}(k)$ possessing a global section.
\end{customthm}

This theorem follows from a more precise statement.  If $\cE$ is a $\delta-$Ulrich sheaf on $X,$ consider the quantity
\[ \alpha(\cE) = h^0(\cE)/\deg(X)\rk(\cE) \]
Our earlier observation can be rephrased as saying that $\cE$ is Ulrich if and only if $\alpha(\cE) = 1$.  Theorem B is proved by exhibiting a sequence of sheaves $\{\cE_m\}_{m}$ where $\cE_m$ is $\delta$-Ulrich with respect to $\cO_X(2^m)$ and such that $\lim_{m \to \infty}{ \alpha(\cE_m) } = 1$.

\subsection*{Acknowledgments}

I.S. was partially supported during the preparation of this paper by National Science Foundation award DMS-1204733. R. K. was partially supported by the National Science Foundation awards DMS-1004306 and DMS-1305377.  We would like to thank the referee for helpful comments.

\subsection*{Notation and Conventions}

Our base field $\bk$ is algebraically closed of characteristic zero.  All open subsets are Zariski-open.  If $R$ is a ring we use the notation $R\{t_1,\dotsc,t_n\}$ for the free $R$-module with basis $t_1,\dotsc,t_n$.


\section{Preliminaries} \label{sec:preliminaries}
In this section, we collect the algebraic prerequisites for the proof of Theorem \ref{main-theorem}.  Throughout, $R$ denotes a commutative $\bk-$algebra and $A$ denotes a commutative $R-$algebra which is free of rank $d \geq 2$ as an $R-$module.

\subsection{Roby Modules and Characteristic Morphisms}
\label{subsec:roby-char}

\begin{definition}
Let $M,W$ be free $R-$modules and let $F \in {\rm Sym}^{\bullet}_{R}(M^{\vee})$ be a homogeneous form of degree $e \geq 2.$  An $R-$module morphism $\phi : M \to \End_R(W)$ is an $F-$Roby module if for all $m \in M$ we have 
\[ \phi(m)^{e} = F(m) \cdot \id_W \]
where $F(m)$ is the image of $m^{\otimes e}$ under the symmetric map $M^{\otimes e} \to R$ associated to $F.$  If, in addition, $W$ is a $\mathbb{Z}/e\mathbb{Z}$-graded $R-$module and $\phi(m)$ is a degree-1 endomorphism for $0 \neq m \in M,$ we say that $\phi$ is a \textit{graded} $F-$Roby module.
\end{definition}

\begin{rmk}
\label{rmk:def-roby-clifford}
The terminology can be explained as follows.  If $\phi$ is an $F-$Roby module, the induced $R-$algebra morphism $T^{\bullet}_{R}(M) \to {\rm End}_{R}(W)$ annihilates $\{\phi(m)^{e}-F(m) : m \in M\},$ and therefore descends to a morphism $R_{F} \to {\rm End}_{R}(W),$ where 
\[ R_{F} := T^{\bullet}_{R}(M)/{\langle}\phi(m)^{e}-F(m) : m \in M{\rangle} \]
is the Roby-Clifford algebra of $F$ (see \cite{Ro}).  Conversely, given an $R-$algebra morphism $R_{F} \to {\rm End}_{R}(W),$ we recover an $F-$Roby module by composing with the natural injection $M \hookrightarrow R_{F}.$
\end{rmk}

\begin{example}\label{ex:monomial-Roby-module}
We recall a construction from \cite{Ch} which will be used in the proof of Proposition \ref{prop:acm-fil-ps}.  Let $M = R\{x_1,\dotsc,x_n\}$ and suppose that $y_1,\dotsc,y_n$ is the dual basis of $M^\vee$.  Consider a monomial $F=y_{i_1}y_{i_2} \dotsc y_{i_e} \in \Sym_R^e( M^\vee)$ and put $W = R\{w_1,\dotsc,w_e\}$.  Then there is a natural, $\Z/e\Z-$graded $F-$Roby module $\phi:M \to \End_R(W)$ given by
\[ \phi( x_i )(w_j) = \begin{cases} w_{j+1} & i = i_j, \\ 0 & \text{otherwise}, \end{cases} \]
where the indices on the elements $w_1,\dotsc,w_e$ are taken modulo $e,$ and $\deg(w_i) = i$ for all $i$.
\end{example}

\begin{definition}
If $A$ is an associative $R-$algebra whose underlying $R-$module is of finite rank $d,$ the \textit{characteristic polynomial} of $A$ is
\[
\chi_{A}(t,a):={\rm det}(tI-\rho_{A}(a)) = \sum_{j=0}^{d}(-1)^{j}{\rm tr}(\wedge^{j}\rho_{A}(a)) \cdot t^{d-j} 
\]
where $\rho_{A} : A \to {\rm End}_{R}(A)$ is the regular representation of $A.$
\end{definition}

Observe that $\chi_{A}(t,a)$ is a degree-$d$ element of ${\rm Sym}^{\bullet}_{R}(A^{\vee}) \otimes_R R[t] \cong {\rm Sym}^{\bullet}_{R}(A^{\vee} \oplus R{\{}t{\}}).$  Also, if $B$ is an $R-$algebra, then for any $a \in A$ and $b \in B$ we have that $\chi_{A}(b,a)$ is a well-defined element of $B.$  

\begin{example}\label{ex:totally-split}
Consider the $R-$algebra $A=R^{\times d}$.  We identify $R^{\times d} = R\{e_1,\dotsc,e_d\}$ where $\{e_i\}$ is the standard basis of idempotents.  Under the regular representation we have $\rho_A(a_1,\dotsc,a_d) = \operatorname{diag}(a_1,\dotsc,a_d)$ and therefore $\chi_A(t,a_1,\dotsc,a_d) = (t-a_1)\dotsm(t-a_d)$.  It folows that 
\[ \chi_A(t) = (t-x_1)\dotsm (t-x_d) \]
where $x_1,\dotsc,x_d$ is the dual basis to $e_1,\dotsc,e_d$.
\end{example}

We record the following elementary properties, which will be used in the sequel.

\begin{lem}
\label{lem:char-base-change}
Let $B$ be an $R-$algebra.
\mbox{}
\begin{itemize}
\item[(i)]{ If $B$ is commutative and free of finite rank as an $R-$module, then $\chi_{A}$ is taken to $\chi_{A \otimes_{R} B}$ under the natural map 
\[
{\rm Sym}^{\bullet}_{R}(A^{\vee})[t] \to {\rm Sym}^{\bullet}_{B}((A \otimes_{R} B)^{\vee})[t]
\]
induced by the base-change map $A^{\vee} \to (A \otimes_{R} B)^{\vee} = {\rm Hom}_{B}(A \otimes_{R} B, B).$}
\item[(ii)]{If $B \to C$ is an embedding of $R-$algebras which are both free of the same finite rank, then $\chi_B$ is the image of $\chi_C$ under the natural morphism
\[ \Sym^\bt_R(C^\vee)[t] \to \Sym^\bt_R(B^\vee)[t]. \]   \hfill \qedsymbol }
\end{itemize} 
\end{lem}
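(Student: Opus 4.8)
The plan is to reduce both statements to the fact that the determinant of a matrix commutes with ring homomorphisms, with the genuine content of (ii) being a non-zero-divisor cancellation. For (i), choose an $R$-basis $v_1,\dots,v_d$ of $A$; then $v_1\otimes 1,\dots,v_d\otimes 1$ is a $B$-basis of $A\otimes_R B$, and the base-change map $A^\vee\to(A\otimes_R B)^\vee$ carries the dual basis of $A^\vee$ to the dual basis of $(A\otimes_R B)^\vee$. Hence the induced map $\Sym^\bt_R(A^\vee)[t]\to\Sym^\bt_B((A\otimes_R B)^\vee)[t]$ is just coefficientwise extension along $R\to B$ together with the identification of coordinate functions. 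In these bases the matrix of $\rho_{A\otimes_R B}(a\otimes 1)$ is the image under $R\to B$ of the matrix of $\rho_A(a)$, since both are read off from the structure constants of $A$, which base-change along $R\to B$. Because $\chi_A(t,a)=\det(tI-\rho_A(a))$ is a universal polynomial in these matrix entries and in $t$, and determinants are preserved by ring homomorphisms, $\chi_A$ is sent to $\chi_{A\otimes_R B}$. (Basis-free: $\rho_{A\otimes_R B}(a\otimes 1)=\rho_A(a)\otimes_R\id_B$, and the traces of exterior powers in the coefficients of $\chi_A$ commute with base change.)

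For (ii), write $\iota:B\hookrightarrow C$ for the embedding and let $P\in M_d(R)$ be its matrix with respect to chosen $R$-bases of $B$ and $C$. The image of $\chi_C$ under $\Sym^\bt_R(C^\vee)[t]\to\Sym^\bt_R(B^\vee)[t]$ is exactly the universal polynomial $b\mapsto\chi_C(t,\iota(b))=\det(tI-\rho_C(\iota(b)))$, since the map is dual to $\iota$. Because $\iota$ is a homomorphism of $R$-algebras, multiplication by $\iota(b)$ on $C$ restricts along $\iota$ to multiplication by $b$ on $B$; that is, $\iota\circ\rho_B(b)=\rho_C(\iota(b))\circ\iota$, which in matrices reads
\[ P\,\rho_B(b)=\rho_C(\iota(b))\,P. \]
This identity is $R$-linear in $b$ and holds on each basis vector of $B$, hence holds for the universal element, i.e.\ as an identity of matrices over $\Sym^\bt_R(B^\vee)$. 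Rewriting it as $P(tI-\rho_B(b))=(tI-\rho_C(\iota(b)))P$ and taking determinants yields
\[ \det(P)\cdot\chi_B(t,b)=\chi_C(t,\iota(b))\cdot\det(P) \]
in $\Sym^\bt_R(B^\vee)[t]$.

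It then remains to cancel $\det(P)$, and this is where the main obstacle lies: $P$ is only injective, not invertible, so $\det(P)$ need not be a unit and one cannot simply conjugate $\rho_B$ into $\rho_C$. The key point is that an injective endomorphism of a free module of finite rank has non-zero-divisor determinant (McCoy's theorem): since $\iota$ is injective, $\det(P)$ is a non-zero-divisor in $R$, hence remains one in the polynomial ring $\Sym^\bt_R(B^\vee)[t]=R[y_1,\dots,y_d][t]$. Cancelling $\det(P)$ from the displayed identity gives $\chi_B(t,b)=\chi_C(t,\iota(b))$, which is precisely the assertion that $\chi_B$ is the image of $\chi_C$. (Alternatively, one may localize $R$ at $\det(P)$, where $\iota$ becomes an algebra isomorphism and the claim follows from (i); the same non-zero-divisor property guarantees that $\Sym^\bt_R(B^\vee)$ injects into this localization, so no information is lost.)
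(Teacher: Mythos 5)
Your proof is correct. The paper does not actually prove this lemma---it is stated with a tombstone symbol as one of several ``elementary properties''---so there is no argument of the paper to compare against; your write-up supplies the missing details. Part (i) is indeed just the compatibility of the regular representation (equivalently, of the structure constants) with base change, together with the base-change invariance of ${\rm tr}(\wedge^{j}(-))$. The real content is in part (ii), and you have identified exactly the right point: the intertwining identity $P\,\rho_B(b)=\rho_C(\iota(b))\,P$ only yields $\det(P)\,\bigl(\chi_B-\iota^{*}\chi_C\bigr)=0$, and since $P$ need not be invertible one must know that $\det(P)$ can be cancelled; McCoy's theorem supplies this from the injectivity of $\iota$, and a non-zero-divisor of $R$ remains a non-zero-divisor in the polynomial ring ${\rm Sym}^{\bullet}_{R}(B^{\vee})[t]$. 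One very minor remark on your parenthetical alternative: part (i) as literally stated requires the base algebra to be free of finite rank over $R$, which the localization $R[\det(P)^{-1}]$ is not; but your structure-constant argument for (i) works verbatim for an arbitrary commutative base change, so this causes no real gap.
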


If $\phi : A \to B$ is a morphism of $R-$algebras, the Cayley-Hamilton theorem implies that 
\[ \chi_{A}(\phi(a),a) = \phi(\chi_{A}(a,a))=0 \]
for all $a \in A.$  The more general notion that follows is a key ingredient in our construction of $\delta-$Ulrich sheaves.

\begin{definition}
If $B$ is an $R-$algebra, an $R-$module morphism $\phi:  A \to B$ is a \textit{characteristic morphism} if $\chi_{A}(\phi(a),a)=0$ for all $a \in A.$
\end{definition}

\begin{rmk}
The notion of a characteristic morphism is strictly more general than that of an $R-$algebra morphism.  If $A = R{\{}e_{1},e_{2}{\}}$ is the $R-$algebra generated by the orthogonal idempotents $e_{1}$ and $e_{2},$ then for any $a,b \in R$ satisfying $a+b \neq 0,$ the map $\phi : A \to {\rm Mat}_{2}(R)$ defined by 
\[
\phi(e_{1})=\begin{pmatrix} 1 & a \\ 0 & 0 \end{pmatrix}, \hskip5pt \phi(e_{2}) = \begin{pmatrix} 0 & b \\ 0 & 1 \end{pmatrix}
\]
is a characteristic morphism, but not an $R-$algebra morphism.
\end{rmk}

We now turn to the sheaf-theoretic formulations of these concepts.  For the remainder of this subsection, $Y$ denotes a smooth irreducible quasi-projective variety, $\cA$ denotes a sheaf of $\cO_{Y}-$algebras which is locally free of rank $d \geq 2,$ and $W$ denotes a finite-dimensional $\bk$-vector space. For a sheaf $\cF$ on $Y$, we denote the stalk of $\cF$ at a point $y \in Y$ by $\cF_y$ and the ${\bk}(Y)-$vector space of rational sections of $\cF$ by $\cF(Y).$

\begin{definition}
\label{defn:char-mor}
If $\cB$ is a coherent sheaf of $\cO_{Y}-$algebras, a $\cO_{Y}-$linear morphism $\phi : \cA \to \cB$ is a \textit{characteristic morphism} if for each $y \in Y$, the $\cO_{Y,y}-$module morphism $\phi_y : \cA_y \to \cB_y$ is a characteristic morphism.
\end{definition} 

The following observation will be used later.

\begin{lem}
\label{lem:char-mor-field}
$\phi : \cA \to \cB$ is a characteristic morphism if and only if the induced ${\bk}(Y)-$linear map $\phi_{{\bk}(Y)} : \cA(Y) \to \cB(Y)$ is a characteristic morphism. \hfill \qedsymbol
\end{lem}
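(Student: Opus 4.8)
The plan is to prove the biconditional by showing that the pointwise condition in Definition \ref{defn:char-mor} is equivalent to the single condition at the generic point $\eta$ (where $\cO_{Y,\eta} = \bk(Y)$), using that $Y$ is integral and that $\cA$, $\cB$ are torsion-free. The key insight is that the vanishing $\chi_{\cA_y}(\phi_y(a), a) = 0$ is the vanishing of a section of a sheaf, and on an integral variety a section of a torsion-free sheaf vanishes everywhere iff it vanishes at the generic point. I will make this precise by introducing the ``universal'' element and treating characteristicity as the vanishing of an explicit sheaf map.

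First I would reduce to the affine case. Since being a characteristic morphism is a local (stalkwise) condition on the left side and a statement about the generic fiber on the right, it suffices to work over an affine open $U = \operatorname{Spec}(R) \subseteq Y$ on which $\cA$ and $\cB$ restrict to free modules $A$ and $B$ of finite rank; because $Y$ is irreducible, every such $U$ has the same function field $\bk(Y) = \operatorname{Frac}(R)$, and $\cA(Y) = A \otimes_R \bk(Y)$, $\cB(Y) = B \otimes_R \bk(Y)$. The hypothesis that $Y$ is smooth and irreducible guarantees $R$ is a (Noetherian) integral domain, which is the property I will actually use.

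Next I would package the characteristic condition as the vanishing of a single $R$-module map. Consider the ``generic'' element $a_{\mathrm{univ}} \in A \otimes_R \operatorname{Sym}^\bullet_R(A^\vee)$ obtained by pairing $A$ with its dual. Applying $\phi$ (extended $\operatorname{Sym}^\bullet_R(A^\vee)$-linearly) and substituting into $\chi_A$ via Lemma \ref{lem:char-base-change}(i) for the base change $R \to \operatorname{Sym}^\bullet_R(A^\vee)$, the quantity $\chi_A(\phi(a), a)$ becomes a single element
\[
\Phi \in B \otimes_R \operatorname{Sym}^\bullet_R(A^\vee),
\]
and $\phi$ is a characteristic morphism precisely when $\Phi = 0$ (the condition ``for all $a$'' is captured by working over $\operatorname{Sym}^\bullet_R(A^\vee)$, since evaluating $\Phi$ at any $a \in A$ recovers $\chi_A(\phi(a),a)$, and the base field is infinite so the polynomial identity is equivalent to all its evaluations vanishing). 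Now $B \otimes_R \operatorname{Sym}^\bullet_R(A^\vee)$ is a free, hence torsion-free, $R$-module. The forward direction is immediate: if $\phi_{\bk(Y)}$ is characteristic, then $\Phi$ maps to $0$ in $B \otimes_R \operatorname{Sym}^\bullet_R(A^\vee) \otimes_R \bk(Y)$, and since $R \hookrightarrow \bk(Y)$ and the module is free, the localization map is injective, forcing $\Phi = 0$, whence every stalk condition holds by localizing further. Conversely, if each $\phi_y$ is characteristic then $\Phi$ vanishes in every localization $B_y \otimes \cdots$, and since an element of a module over a domain that dies in all (or even just the generic) localizations is zero, we get $\Phi = 0$ and therefore $\phi_{\bk(Y)}$ is characteristic.

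The step I expect to be the main obstacle is the careful bookkeeping that identifies $\chi_{\cA_y}(\phi_y(a),a)$ with the image of the single universal element $\Phi$ under localization — that is, verifying that ``pointwise characteristic for all $y$'' and ``characteristic at the generic point'' are both exactly the vanishing of the \emph{same} section $\Phi$ over a torsion-free sheaf. This requires the compatibility of the characteristic polynomial with base change and localization (Lemma \ref{lem:char-base-change}), together with the fact that over an infinite field the ``for all $a \in A$'' quantifier is faithfully encoded by the universal element; once these identifications are in place, the equivalence is a one-line consequence of injectivity of $R \hookrightarrow \bk(Y)$ for torsion-free modules.
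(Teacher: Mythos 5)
The paper offers no proof of this lemma (it is stated with a tombstone and left to the reader), so there is nothing to compare against; your universal-element argument is surely the intended routine one, and most of it is sound. Packaging the condition as the vanishing of a single element $\Phi \in B \otimes_R \Sym^{\bullet}_R(A^{\vee})$, using Lemma \ref{lem:char-base-change}(i) to identify the stalkwise and generic-point conditions with localizations of that same $\Phi$, and invoking the infinite base field to see that the quantifier ``for all $a$'' is faithfully encoded by $\Phi$ are all correct.

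The gap is in your opening reduction: you pass to an affine open on which $\cB$ restricts to a \emph{free} module $B$, and you later rely on $B \otimes_R \Sym^{\bullet}_R(A^{\vee})$ being torsion-free over $R$. But Definition \ref{defn:char-mor} only assumes $\cB$ coherent; a coherent sheaf need not be free (or even torsion-free) near a given point, and the ``for each $y$'' condition must be verified at every point, not just on the dense open where $\cB$ happens to be locally free. This is not cosmetic: the ``if'' direction genuinely fails for a torsion target. Take $Y = \mathbb{A}^1 = \operatorname{Spec}\bk[s]$, $\cA = \cO_Y\{e_1,e_2\}$ with orthogonal idempotents so that $\chi_{\cA}(t,a_1e_1+a_2e_2) = (t-a_1)(t-a_2)$ as in Example \ref{ex:totally-split}, let $\cB$ correspond to $\bk[s][\epsilon]/(\epsilon^2,\, s\epsilon)$, and set $\phi(e_1)=1+\epsilon$, $\phi(e_2)=0$. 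Then $\chi_{\cA}(\phi(a),a) = a_1(a_1-a_2)\epsilon$, which is nonzero in the stalk at the origin (take $a=e_1$) yet vanishes after tensoring with $\bk(s)$ because $\epsilon$ is torsion. So your argument proves the lemma only under the extra hypothesis that $\cB \to \cB \otimes \bk(Y)$ is injective, e.g.\ that $\cB$ is torsion-free. That hypothesis holds in every application in the paper, where the target is always the locally free sheaf $\End(W \otimes \cO_Y)$, so your proof suffices for the paper's purposes; but you should either add the hypothesis explicitly or note that for general coherent $\cB$ only the ``only if'' direction survives.
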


If $\rho_{A} : \cA \to {\cE}nd(\cA)$ is the regular representation of $\cA,$ then since ${\rm tr}(\wedge^{j}\rho_{\cA})$ is a global section of ${\rm Sym}^{j}(\cA^{\vee})$ for each $j,$ there exists a global characteristic polynomial $\chi_{\cA} \in H^{0}({\rm Sym}^{\bullet}(\cA^{\vee} \oplus \cO_{Y}\{t\})).$  

\begin{definition}
An $\cO_{Y}-$linear morphism $\psi : \cA \oplus \cO_{Y}\{T\} \to {\rm End}(W \otimes \cO_{Y})$ is a \emph{$\chi_{\cA}-$Roby module} if for each $y \in Y$, the $\cO_{Y,y}-$module morphism $\psi_y : \cA_y \oplus \cO_{Y,y}\{T\} \to {\rm End}(W \otimes \cO_{Y,y})$ is a $\chi_{\cA}-$Roby module in the sense that for all $a \in \cA_y$ and all $r \in \cO_{Y,y}$ we have
\[ \psi(a,rT)^{d} = \chi_{\cA}(a,r) \cdot {\rm Id}. \]
If $W$ is $\Z/d\Z-$graded and $\psi(a,rT)$ is a degree-1 endomorphism for all local sections $a,r$ then $\psi$ is a \emph{graded $\chi_\cA-$Roby module}.
\end{definition}

If $\psi$ is a graded $\chi_{\cA}$-Roby module as above, then $\psi(T)$ is globally defined, and $\psi(T)^{d} = {\rm Id}$ since $\chi_\cA$ is monic in $t$.  In particular, $\psi(T)$ is invertible in ${\rm End}(W \otimes \cO_{Y}).$

\begin{lem}
\label{lem:t-inverted}
Let $\psi : \cA \oplus \cO_{Y}\{T\} \to {\rm End}(W \otimes \cO_{Y})$ be a graded Roby $\chi_{\cA}-$module.  Then the morphism $C_{\psi} : \cA \to {\rm End}(W \otimes \cO_{Y})$ defined by the composition
\[
\cA \into \cA \oplus \cO_{Y}\{T\} \xrightarrow{-\psi} {\rm End}(W \otimes \cO_{Y}) \xrightarrow{\cdot \psi(T)^{-1}} {\rm End}(W \otimes \cO_{Y})
\]
is a characteristic morphism.
\end{lem}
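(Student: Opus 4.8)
The plan is to reduce the statement to a purely ring-theoretic identity inside $\End(W \otimes \cO_{Y,y})$ at each point $y$ (equivalently, by Lemma \ref{lem:char-mor-field}, to an identity over $\bk(Y)$), and then to recognize the defining relation of a graded Roby module as a \emph{one-sided factorization} of the characteristic polynomial, from which a Cayley--Hamilton-style telescoping gives the vanishing we want.

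First I would fix $y$ and set up notation. For $a \in \cA_y$ write $P = \psi(a,0)$ and $Q = \psi(T)$, so that $\cO_{Y,y}$-linearity of $\psi$ gives $\psi(a,rT) = P + rQ$ for every $r \in \cO_{Y,y}$, while the composition defining $C_\psi$ reads $C_\psi(a) = -P Q^{-1}$. By the graded hypothesis $Q$ is invertible with $Q^d = \Id$. The goal is to prove $\chi_{\cA}(C_\psi(a),a)=0$, where I view $\chi_{\cA}(a,r) = \sum_{j=0}^{d}(-1)^j{\rm tr}(\wedge^j\rho_{\cA}(a))\,r^{d-j}$ as a monic degree-$d$ polynomial in $r$ with coefficients in $\cO_{Y,y}$. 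The only input from the Roby structure is the relation $(P+rQ)^d = \chi_{\cA}(a,r)\cdot\Id$, valid for all $r$; since $\bk \subseteq \cO_{Y,y}$ is infinite and both sides are polynomial in $r$, I may treat this as an identity of polynomials in a formal variable $r$.

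The crux is to rewrite the left-hand side. Setting $R = PQ^{-1} = -C_\psi(a)$, I would use $P + rQ = (R+r)Q$ and push the factors of $Q$ to the right one at a time; each pass conjugates $R$, producing
\[
(P+rQ)^d = \big((R+r)Q\big)^d = \prod_{i=0}^{d-1}(S_i + r), \qquad S_i := Q^i R Q^{-i},
\]
where the trailing $Q^d$ is absorbed via $Q^d = \Id$ and $S_0 = R$. Combining this with the Roby relation yields the factorization
\[
\chi_{\cA}(a,r)\cdot\Id = \big(r - C_\psi(a)\big)\prod_{i=1}^{d-1}(S_i + r)
\]
in the noncommutative polynomial ring $\End(W \otimes \cO_{Y,y})[r]$, since $S_0 + r = r - C_\psi(a)$. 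To finish I would invoke the elementary one-sided Cayley--Hamilton principle: if a polynomial $p(r)$ with central (here scalar) coefficients factors as $p(r)\,\Id = (r-X)\,q(r)$, then comparing coefficients gives $q_{m-1}-Xq_m = p_m\Id$, and telescoping $p(X)=\sum_m X^m(q_{m-1}-Xq_m)$ collapses to $0$. Applying this with $X = C_\psi(a)$ and $p(r)=\chi_{\cA}(a,r)$ gives $\chi_{\cA}(C_\psi(a),a)=0$, as desired.

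I expect the main obstacle to be the bookkeeping forced by noncommutativity: $P$ and $Q$ do not commute, so the expansion of $\big((R+r)Q\big)^d$ must be tracked carefully through the conjugates $S_i = Q^iRQ^{-i}$, and the concluding step must use the \emph{left} factorization rather than the naive substitution $r \mapsto C_\psi(a)$, which is meaningless here. Two smaller points also deserve attention: justifying the passage to a formal $r$ (the relation holds for all scalars $r\in\bk$ and $\bk$ is infinite, so a Vandermonde argument equates coefficients), and recording that the graded hypothesis is precisely what makes $Q=\psi(T)$ a globally defined invertible endomorphism with $Q^d=\Id$, which is what renders $C_\psi$ well-defined and the factorization valid.
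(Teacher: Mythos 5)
Your proof is correct, but it reaches the conclusion by a genuinely different route than the paper. You work directly in the (noncommutative) polynomial ring ${\rm End}(W\otimes\cO_{Y,y})[r]$: writing $P=\psi(a,0)$, $Q=\psi(T)$, $R=PQ^{-1}=-C_\psi(a)$, you expand $(P+rQ)^d=\bigl((R+r)Q\bigr)^d=\prod_{i=0}^{d-1}(Q^iRQ^{-i}+r)$ using $Q^d=\id$, obtain the left factorization $\chi_{\cA}(a,r)\cdot\id=(r-C_\psi(a))\,q(r)$, and conclude by the one-sided factor theorem (the telescoping $\sum_m X^m(q_{m-1}-Xq_m)=0$, which is legitimate precisely because the coefficients of $\chi_\cA(a,r)$ are central scalars). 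I checked the ordering of the conjugates $S_i=Q^iRQ^{-i}$ and the telescoping identity; both are right, and your passage from ``identity for all $r\in\bk$'' to a formal polynomial identity is justified since $\bk$ is infinite. The paper instead packages $\psi^\vee$ as a matrix-factorization-type datum: it builds the graded module $M=W_K\otimes\Sym^{\bullet}(A^\vee)[t]$ over the hypersurface ring $\Sym^{\bullet}(A^\vee)[t,w]/(w^d-\chi_A)$ with $w$ acting by $\psi^\vee$, observes that $M$ is a graded Ulrich module and hence free over the Noether normalization $\Sym^{\bullet}(A^\vee)[w]$, solves for the $t$-action, and reduces modulo $w$ so that $\chi_A(t)$ acts by zero while $t$ acts by $C_\psi$. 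Your approach is more elementary and self-contained, needing no input from the theory of maximal Cohen--Macaulay modules over hypersurfaces; the paper's approach is more conceptual, makes the connection to Ulrich modules (the guiding theme of the paper) explicit, and makes the functoriality of $C_\psi$ in invariant subspaces, used later in Proposition \ref{prop:acm-fil-ps}, essentially transparent. One very minor inaccuracy in your closing remarks: $Q^d=\id$ follows from the Roby relation at $a=0$, $r=1$ together with $\chi_{\cA}$ being monic in $t$, not from the grading hypothesis per se (the grading is what forces $\psi(T)$ to be homogeneous of degree $1$); this does not affect the proof.
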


\begin{proof}
By Lemma \ref{lem:char-mor-field} it suffices to consider a field extension $K/\bk$ and a $d-$dimensional commutative $K-$algebra $A$ in place of $\cO_{Y}$ and $\cA,$ respectively.  Put $W_K = W \tensor_\bk K$.  Let $\chi_{A}=\chi_{A}(t)$ be the characteristic polynomial of $A,$ and let $\psi:  A \oplus K\{T\} \to {\rm End}_{K}(W_K)$ be a graded $\chi_{A}-$Roby module.  Then $\psi$ corresponds to an element $\psi^{\vee}$ of ${\rm End}_{K}(W_K) \otimes (A^{\vee} \oplus K\{t\})$ whose $d-$th power 
\[
(\psi^{\vee})^{d} \in {\rm End}_{K}(W_K) \otimes {\rm Sym}^{d}(A^{\vee} \oplus K\{t\}) \cong {\rm Hom}(W_K,W_K \otimes  {\rm Sym}^{d}(A^{\vee} \oplus K\{t\}))
\] 
is equal to $1_{W} \otimes \chi_{A}.$  

Consider the graded $S = \Sym^\bt(A^\vee)[t,w]/(w^d-\chi_A)$-module $M = W_K \tensor_K \Sym^\bt(A^\vee)[t]$ on which $w$ acts by $\psi^\vee$ (and $A^\vee,t$ act in the obvious way).  Now, $M$ is a graded maximal Cohen-Macaulay $S$-module, generated in degree zero--a graded Ulrich module, in fact.  Since $\Sym^\bt(A^\vee)[w]$ is a standard-graded polynomial subring of $S$ over which $S$ is finite and flat, it follows that $M$ is graded-free over $\Sym^\bt(A^\vee)[w]$ and generated in degree zero.  Consequently, the map 
\[ W_K \tensor_K \Sym^\bt(A^\vee)[w] \to M \]
is an isomorphism.  We aim to compute the action of $t$ in terms of the action of $w$ and $A^\vee$.  We can write 
\[ \psi^\vee = \psi_0^\vee + \psi(T) \tensor t, \quad \psi_0^\vee \in \End_K(W_K)\tensor A^\vee.\]
So if $m \in M$ we have
\[ w m = \psi_0^\vee m + t \psi(T)m. \]
As we observed earlier, $\psi(T)$ is invertible.  Replacing $m$ by $\psi(T)^{-1}m$ in the previous equation, we have that
\[  tm = w \psi(T)^{-1} m - \psi_0^\vee \psi(T)^{-1} m. \]
Let $\overline{M}$ be the graded $\Sym^\bt(A^\vee)[t]/(\chi_A(t))$-module obtained from reducing the $S-$module structure of $M$ modulo $w$.  Then $\overline{M}$ is graded-free over $\Sym^\bt(A^\vee),$ and it is generated in degree zero.  Now, $t$ acts on $\overline{M}$ by $-\psi^\vee_0 \psi(T)^{-1}$.  Since $\chi_A(t)$ acts by zero on $\overline{M}$, the map $A \to \End_K(W_K)$ corresponding to $-\psi^\vee_0 \psi(T)^{-1}$ is a characteristic morphism.  This map is $C_\psi$, so we see that $C_\psi$ is a characteristic morphism.
\end{proof}

\begin{example}\label{ex:totally-split-2}
The following construction will be used in the proof of Lemma \ref{lem:integral-curve-map}.  Once again, we consider $A = R^{\times d} = R\{e_1,\dotsc,e_d\}$.  From Example \ref{ex:totally-split} we see that $\chi_A(t) = \prod_{i=1}^d{(t-x_i)}$ where $\{x_i\}$ is the dual basis to $\{e_i\}$.  There is a natural graded $\chi_A-$Roby module $\phi:A\oplus R\{T\} \to \End_R( R\{w_1,\dotsc,w_d\} )$ defined by
\[ \phi(T)(w_i) = w_{i+1}, \quad \phi(e_i)(w_j) = \begin{cases} -w_{j+1} & i = j+1, \\ 0 & i \neq j+1, \end{cases} \]
where the indices on the $w_i$ are taken modulo $d$ and $\deg(w_i)=i$.  Since
\[ \phi(r T + \sum_{i=1}^r{a_i e_i})(w_j) = (r-a_{j+1})w_{j+1} \]
we see by iteration that $\phi$ is indeed a $\chi_A-$Roby module.  Let us compute $C_\phi$.  We have
\[ C_\phi(e_i)(w_j) = -\phi(e_i)(\phi(T)^{-1}(w_j)) = -\phi(e_i)(w_{j-1}) = \delta^i_j w_j. \]
So, better than simply being a characteristic morphism we see that $C_\phi$ is an algebra morphism, equipping $W$ with the structure of a free $R^{\times d}$-module.  However, note that the construction of this module implicitly relied on a cyclic ordering on the idempotents $e_\bt \in A$.
\end{example}

\begin{rmk}
We note that the formation of $C_\phi$ is functorial.  More precisely, suppose that $\phi:\cA \oplus \cO\{T\} \to \End(W)\tensor \cO$ is a $\chi$-Roby morphism.  If $W' \subset W$ is an invariant subspace in the sense that for any local section $a + r T$ of $\cA \oplus \cO\{T\}$, the action of $\phi(a,rT)$ on $W \tensor \cO$ sends $W' \tensor \cO$ into itself, then the action of $\cA$ via $C_\phi$ will also send $W' \tensor \cO$ into itself.  This equips $W' \tensor \cO$ and $W/W' \tensor \cO$ with the structures of $\chi$-Roby modules and characteristic modules, respectively.
\end{rmk}

\subsection{$\Z/d\Z-$Graded Tensor Products}
\label{prelim:subsec:twisted-tensor}

The following notion, which was first applied to the study of Roby modules in \cite{Ch}, is required for the proof of Proposition \ref{prop:acm-fil-ps}.  The proof is essentially that of Theorem 3.1 in \cite{BHS}. 

\begin{prop-def}
\label{prop:twisted-tensor-roby}
Let $M$ be a free $R-$module and $F_1,F_2 \in \Sym^d_R(M^\vee)$ homogeneous forms.  Suppose that for each $i=1,2$ we have a graded $F_i-$Roby module $\phi_i:M \to \End_R(W_i)$, where $W_1,W_2$ are $\Z/d\Z-$graded $R-$modules.  Then the morphism $\phi:M \to \End_R( W_1 \tensor W_2 )$ defined by
\[ \phi(m)( w_1 \tensor w_2 ) = \phi_1(m)(w_1) \tensor w_2 + \xi^{\deg(w_1)}w_1 \tensor \phi_2(m)(w_2) \]
is a graded $F_1+F_2-$Roby module, where $W_1 \tensor W_2$ is graded by $\deg(w_1 \tensor w_2) = \deg(w_1)+\deg(w_2)$ for homogeneous elements $w_i \in W_i$.  We denote this morphism by $\phi = \phi_1 \wh{\tensor}_\xi \phi_2$.
\end{prop-def}


\section{Construction of $\delta$-Ulrich Sheaves} \label{sec:construction}
We now take up the proof of Theorem \ref{main-theorem} in earnest.  As our first step, we use Lemma \ref{lem:t-inverted} to produce a type of enhanced Ulrich sheaf for any finite covering of $\P^1.$

\begin{lem}
\label{lem:integral-curve-map}
Let $C$ be a smooth curve and let $f : C \to \mathbb{P}^{1}$ be a morphism of degree $d \geq 2$  Then there exists a graded $\chi_{f_{\ast}\cO_{C}}$-Roby module $\psi$ whose associated characteristic morphism $C_{\psi}$ is an $f_{\ast}\cO_{C}-$module morphism.
\end{lem}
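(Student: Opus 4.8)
The plan is to reduce the global statement on $C$ to a local computation over an open cover of $\P^1$, using the monogenic structure of $f_*\cO_C$ away from a finite set and then gluing via $\Z/d\Z$-graded tensor products. The key insight is that Lemma \ref{lem:t-inverted} converts any graded $\chi_{f_*\cO_C}$-Roby module into a characteristic morphism $C_\psi$, so it suffices to construct a graded Roby module $\psi$ whose $C_\psi$ happens to be multiplicative (an algebra morphism), and the prototype for this is exactly Example \ref{ex:totally-split-2}. That example builds, for the totally split algebra $R^{\times d}$, a graded $\chi$-Roby module whose $C_\phi$ is an $R^{\times d}$-module structure, at the cost of choosing a cyclic ordering of the idempotents. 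The heart of the matter is to globalize this choice over $\P^1$.

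\textbf{First I would} consider a general point $p \in \P^1$ where $f$ is unramified. Over a suitable affine neighborhood $U = \Spec R$ of such a point, $f_*\cO_C|_U$ becomes a split étale algebra $R^{\times d} = R\{e_1,\dotsc,e_d\}$, and Example \ref{ex:totally-split-2} supplies a graded $\chi$-Roby module $\phi$ whose $C_\phi$ realizes the module structure. The local data I need is a cyclic ordering of the $d$ sheets of $f$, i.e. a trivialization of the $d$-sheeted étale cover together with a $\Z/d\Z$-action permuting the sheets cyclically. \emph{The hard part will be} that such a cyclic ordering cannot be chosen globally on all of $\P^1$: the monodromy of $f$ around its branch points permutes the sheets nontrivially, so there is no canonical cyclic structure, and this is precisely why a naive sheet-by-sheet construction fails to glue.

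\textbf{To overcome this} I would exploit Lemma \ref{lem:monogenic-opens}, which covers $\P^1$ (up to codimension $2$, hence in the curve case up to finitely many points that are harmless after reflexive extension) by two open affines $V_1, V_2$ on which $f_*\cO_C$ is monogenic, $f_*\cO_C|_{V_i} \cong \cO_{V_i}[z_i]/(p_i(z_i))$. On a monogenic algebra the characteristic polynomial $\chi$ is literally $p_i(t)$ (by Example \ref{ex:totally-split} applied fiberwise, or directly from Lemma \ref{lem:char-base-change}(ii)), and the generator $z_i$ gives a natural graded Roby module via the companion-matrix construction: set $W = \cO_{V_i}\{w_1,\dotsc,w_d\}$, let $\psi(T)$ be the cyclic shift $w_j \mapsto w_{j+1}$, and define $\psi(z_i)$ so that $C_\psi$ sends $z_i$ to its companion matrix acting on $W$. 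This makes $C_\psi$ an algebra morphism on each $V_i$ \emph{without} requiring a splitting of the cover, since monogenicity replaces the choice of ordering by the choice of a single generator. The remaining task is to reconcile the two local Roby modules on the overlap $V_1 \cap V_2$: here the $\Z/d\Z$-graded tensor product of Proposition-Definition \ref{prop:twisted-tensor-roby} enters, allowing the two graded Roby modules to be combined compatibly with their gradings, with the root-of-unity twist $\xi^{\deg(w_1)}$ absorbing the discrepancy between the two companion structures.

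\textbf{I expect the main obstacle} to be verifying that the globally assembled $\psi$ is genuinely a single graded $\chi_{f_*\cO_C}$-Roby module on all of $C$ — that is, that $\psi(a,rT)^d = \chi_{f_*\cO_C}(a,r)\cdot\Id$ holds uniformly — and that the resulting $C_\psi$ is an $f_*\cO_C$-module morphism rather than merely a characteristic morphism. The graded Roby identity is generically governed by Lemma \ref{lem:t-inverted}, and multiplicativity of $C_\psi$ can be checked on a dense open where $f$ is étale by comparing with Example \ref{ex:totally-split-2}; since being a module morphism is a closed condition (it amounts to the vanishing of certain sections of a coherent sheaf) and the locus of validity is dense, it propagates to all of $C$. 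The delicate point is ensuring the grading conventions on $V_1$ and $V_2$ are compatible enough that Proposition-Definition \ref{prop:twisted-tensor-roby} applies on the overlap; once that bookkeeping is in place, Lemma \ref{lem:char-mor-field} lets me verify everything after passing to the generic point, where $f_*\cO_C$ becomes a product of fields and the explicit computation of Example \ref{ex:totally-split-2} directly yields an algebra morphism.
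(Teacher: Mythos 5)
Your diagnosis of the obstruction is exactly right --- the monodromy of $f$ prevents a global cyclic ordering of the sheets, and Example \ref{ex:totally-split-2} only applies where the cover splits --- but the two devices you propose to get around it do not work. First, the ``companion matrix'' construction on a monogenic piece $\cO_{V_i}[z_i]/(p_i(z_i))$ does not produce a graded Roby module. The defining condition $\psi(a,rT)^d = \chi(a,r)\cdot\mathrm{Id}$ is a \emph{scalar} identity required for \emph{every} local section $a$, which is far stronger than the Cayley--Hamilton identity $p_i(M_{z_i})=0$ satisfied by the companion matrix: $M_{z_i}^d$ is not a scalar unless $p_i(t)=t^d-c$. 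The explicit graded Roby module in the paper exists only for the totally split algebra $R^{\times d}$, where $\phi(rT+\sum a_ie_i)^d = \prod_i(r-a_i)\cdot\mathrm{Id}$ by direct computation; there is no analogous formula attached to a single generator of a general monogenic algebra. Second, the $\Z/d\Z$-graded tensor product of Proposition-Definition \ref{prop:twisted-tensor-roby} combines Roby modules for forms $F_1$ and $F_2$ into one for the \emph{sum} $F_1+F_2$; it is an additive decomposition device (used in Proposition \ref{prop:acm-fil-ps} to write $\chi=\chi_0+\sum m_{i,j}$), not a mechanism for gluing two local constructions on an overlap $V_1\cap V_2$, so it cannot ``absorb the discrepancy'' as you hope. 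With no globally defined $\psi$, the closedness/density argument at the end has nothing to propagate. (Also, Lemma \ref{lem:monogenic-opens} is stated for $n\geq 2$ and does not literally apply to $\P^1$, though that is a minor point.)

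The paper resolves the monodromy problem by going \emph{up} rather than by patching: let $g:D\to\P^1$ be the cover corresponding to the splitting field of $p(z)$, so that $\cA\otimes_{\cO}\cB$ normalizes to $\bk^{\times d}\otimes_{\bk}\cB$ where $\cB=g_*\cO_D$; the cyclic ordering of sheets is then chosen once, globally over $D$, and Example \ref{ex:totally-split-2} applies verbatim to give a graded Roby module over $\cB$ whose $C_\phi$ is an algebra morphism. The descent back to $\cO_{\P^1}$-modules is achieved by tensoring with an Ulrich sheaf $\cE$ for $D\to\P^1$ (which exists for curves by \cite{ESW} and is trivial as an $\cO_{\P^1}$-module, $\cE\cong W\otimes\cO$), and the desired $\psi$ is the restriction of the resulting map along the embedding $\cA\hookrightarrow\cB\otimes\bk^{\times d}$; Lemma \ref{lem:char-base-change} guarantees the characteristic polynomials match so the Roby identity descends. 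This use of an auxiliary Ulrich module to trivialize the splitting cover is the key idea missing from your proposal.
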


\begin{proof}
Let $K(C)/K(\P^1)$ be the field extension corresponding to $f$.  Since the extension is separated, it has the form $K(C)\cong K(\P^1)[z]/(p(z))$ for some polynomial $p(z)$.  Let $L$ be the splitting field of $p(z)$ and $g:D \to \P^1$ the map of curves corresponding to $L/K(\P^1)$.  Then $C \times_{\P^1} D$ has $d$ components, each of which is isomorphic to $D$.  So we have a diagram
\[
\xymatrix@=8pt{ \sqcup_{i=1}^d D \ar[dr]^{\eta} & & \\ & C \times_{\P^1} D \ar[r] \ar[d] & D \ar[d]^{g} \\ & C \ar[r]^{f} & \P^1 }
\]
where $\eta$ is the normalization of $C \times_{\P^1} D$.  Let $\cA = f_*\cO_C$, $\cB = g_*\cO_D$ and $\cO = \cO_{\mathbb{P}^{1}}.$  Since $C$ and $D$ are reduced curves, they are locally CM, so $\cA$ and $\cB$ are locally free as $\cO-$modules; in particular, $\cA \otimes_{\cO} \cB$ is locally free as a $\cB-$module.  Also, $\eta$ is induced by a $\cB-$module morphism $\widetilde{\eta} : \cA \tensor_\cO \cB \to \bk^{\times d} \otimes_{\bk} \cB$.  

Let $\chi(t) \in \Sym^\bt_\cO(\cA^\vee)[t]$, $\wt{\chi}(t) \in \Sym^\bt_\cB( (\cA \tensor_\cO \cB)^\vee)[t]$, and $\wt{\chi}_s(t) \in \Sym^\bt_\cB( (\cB^{\vee} \tensor_\bk \bk^{\times d})^\vee)[t]$ be the characteristic polynomials of $\cA$ over $\cO$, $\cA \tensor_\cO \cB$ over $\cB$, and $\cB \tensor_\bk \bk^{\times d}$ over $\cB$ respectively.  Then (according to Lemma \ref{lem:char-base-change}) under the natural maps
\begin{align*} \Sym^\bt_\cB( (\cB \tensor_\bk \bk^{\times d} )^\vee)[t] \to \Sym^\bt_\cB((\cA \tensor_\cO \cB)^\vee)[t] \\ \Sym^\bt_\cO(\cA^\vee)[t] \to \Sym^\bt_\cB( (\cA \tensor_\cO \cB)^\vee)[t] 
\end{align*}
we see that $\wt{\chi}_s(t)$ maps to $\wt{\chi}(t)$ and $\chi(t)$ maps to $\wt{\chi}(t)$.  This means that if $a$ and $r$ are local sections of $\cA$ and $\cO$, respectively, then $\wt{\chi}_s(a,rt) = \chi(a,rt)$.

As in Example \ref{ex:totally-split-2}, there is a natural graded $\wt{\chi}_s(t)$-Roby module  $\phi :(\bk^{\times d} \tensor_\bk \cB) \oplus \cB\{T\} \to \End_\cB(\cB \tensor_\bk \bk^{\times d})$ defined by
\[ \phi(T)(e_j) = e_{j+1}, \quad \psi( e_i )( e_j ) = \begin{cases} -e_{j+1} & i = j+1 \\ 0 & i \neq j+1 \end{cases} \]
where the indices of the standard idempotents $e_i$ are taken modulo $d$.  Clearly, $\phi$ is a Roby module for the characteristic polynomial $\wt{\chi}$ for $\cB \tensor_\bk \bk^{\times d}$ over $\cB$. Moreover, one can verify that $C_\phi$ is an algebra morphism.

By \cite{ESW}, there is an Ulrich sheaf $\cE$ for $D$ over $\P^1$, which we view as a $\cB$-module on $\P^1$.  Note that there is an algebra morphism
\[ \End_\cB(\cB \tensor \bk^{\times d}) \to \End_\cO(\cE \tensor \bk^{\times d}) \]  
defined by tensoring a map with $\cE$ over $\cB$.  We can then obtain a map
\[ \wt{\phi}: \cB \tensor_\bk \bk^{\times d} \oplus \cB\cdot T \to \End(\cB \tensor_\bk \bk^{\times d}) \to \End_\cO(\cE \tensor_\bk \bk^{\times d}) = \End(W \tensor \bk^{\times d} \tensor \cO) \]
where we choose a trivialization $\cE \cong W \tensor \cO$ as $\cO$-modules.  Now this induces a $\chi$-Roby module structure on $W \tensor \bk^{\times d} \tensor \cO$.  Let $\psi:\cA \oplus \cO\{T\} \to \End(W\tensor \bk^{\times d})\tensor \cO$ be the restriction of $\wt{\phi}$ to $\cA \oplus \cO\{T\} \subset \cB \tensor \bk^{\times d} \oplus \cB\{T\}$, where $\cA \to \cB \tensor \bk^{\times d}$ is the map $\cA \tensor 1 \to \cA \tensor \cB \xrightarrow{\wt{\eta}} \cB \tensor \bk^{\times d}$. 

Suppose that $a,r$ are local sections of $\cA$ and $\cO$ respectively.  Then we know that 
\[\phi(a,r)^d = \wt{\chi}_s(a,r)\cdot \id \in \End_\cB(\cB \tensor \bk^{\times d})\]  
Now $\wt{\chi}_s(a,r) = \chi(a,r) \in \cO$.  Since the morphism $\End_\cB(\cB \tensor \bk^{\times d}) \to \End(W \tensor \bk^{\times d}) \tensor \cO$ is $\cO$-linear, $\psi$ is a $\chi$-Roby module.  Finally, we must check that $C_\psi$ is a morphism.  Now, $C_\phi = R_{\phi(T)^{-1}} \phi$ is a morphism, where $R_{\phi(T)^{-1}}$ is right multiplication by $\phi(T)^{-1}$.  Composing $C_\phi$ with the map $\End_\cB(\cB \tensor \bk^{\times d}) \to \End(W \tensor \bk^{\times d}) \tensor \cO$, we obtain $R_{\wt{\phi}(T)^{-1}} \wt{\phi}$ and this is still a morphism.  Now, if we restrict this morphism to $\cA$ we obtain $C_\psi = R_{\psi(T)^{-1}} \psi$ since $\psi(T) = \wt{\phi}(T)$ and $\psi$ is the restriction of $\wt{\phi}$ to $\cA$.
\end{proof}

\begin{definition}
Let $W$ be a vector space, and let $F^{\bullet}$ be an increasing filtration on $W.$  A \textit{filtered pseudomorphism} $\phi: \cA \to {\rm End}(W \otimes \cO_{\mathbb{P}^{n}})$ is a characteristic morphism satisfying the following properties:
\begin{itemize}
\item[(i)]{The image of $\phi$ is contained in the algebra ${\rm End}_{F^{\bullet}}(W \otimes \cO_{\mathbb{P}^{n}})$ of endomorphisms preserving $F^{\bullet}.$}
\item[(ii)]{The induced map $\phi_{F^{\bullet}} : \cA \to \Pi_{i}{\rm End}(F^{i+1}W/F^{i}W \otimes \cO_{\mathbb{P}^{n}})$ is an $\cO_{\mathbb{P}^{n}}$-algebra morphism.}
\end{itemize}
\end{definition}

Our $\delta-$Ulrich sheaf will come from a characteristic morphism that restricts to a filtered pseudomorphism on a 1-dimensional linear section.

\begin{definition}
If $R$ is a commutative ring, a finitely generated $R-$algebra $A$ is said to be \textit{monogenic} if there exists a monic polynomial $p[z] \in R[z]$ such that $A \cong R[z]/{\langle}p(z){\rangle}.$  If $Y$ is a quasi-projective variety and $U \subseteq Y$ is open, a coherent sheaf $\cA$ of $\cO_Y$-algebras is said to be \textit{monogenic on} $U$ if $\cA|_U$ is a monogenic $\cO_U-$algebra.
\end{definition}  

\begin{lem}
\label{lem:monogenic-opens}
Let $X \subseteq \mathbb{P}^{N}$ be a subvariety of dimension $n$ which is regular in codimension 1, where $2 \leq n \leq N-2.$  Then for a general finite linear projection $\pi : X \to  \mathbb{P}^{n}$, there are affine open sets $V_{1}, V_{2} \subseteq \mathbb{P}^{n}$ satisfying the following conditions:
\begin{itemize}
\item[{\rm (i)}]{$\pi_{\ast}\cO_{X}$ is monogenic on $V_{1}$ and $V_{2}.$}
\item[{\rm (ii)}]{$V_{1} \cup V_{2}$ contains a line $\ell$ such that $X_{\ell} := \pi^{-1}(\ell)$ is smooth and contained in the regular locus $X^{{\rm reg}}.$}
\end{itemize}
\end{lem}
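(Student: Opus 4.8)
The plan is to realize $\pi$ as the projection from a general center $\Lambda\subseteq\mathbb{P}^{N}$ of dimension $N-n-1$ disjoint from $X$, so that $\pi$ is finite of degree $d=\deg(X)$, and to produce the monogenic generators as restrictions to $X$ of general linear forms on $\mathbb{P}^{N}$. Since $X$ is regular in codimension $1$, the singular locus $X^{\mathrm{sing}}$ has codimension $\geq 2$, so $\pi(X^{\mathrm{sing}})$ has codimension $\geq 2$ in $\mathbb{P}^{n}$; on $U_0:=\mathbb{P}^{n}\setminus\pi(X^{\mathrm{sing}})$ the preimage lies in the smooth (hence Cohen--Macaulay) locus of $X$, so $\pi$ is flat there and $\pi_{\ast}\cO_X|_{U_0}$ is locally free of rank $d$. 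For $v\in U_0$ the fibre $\pi^{-1}(v)$ is a length-$d$ subscheme of $X\subseteq\mathbb{P}^{N}$ contained in the $(N-n)$-plane $P_v$, and $(\pi_{\ast}\cO_X)_v\otimes\bk(v)=\cO_{\pi^{-1}(v)}$. A linear form $z$ generates this Artinian $\bk(v)$-algebra exactly when it separates the support and generates each local factor, which is achievable by some (equivalently, a general) linear form if and only if $\pi^{-1}(v)$ is curvilinear.

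First I would bound the non-monogenic locus. Let $\mathcal{S}\subseteq\mathbb{P}^{n}$ be the union of $\pi(X^{\mathrm{sing}})$ with the set of $v$ whose fibre is non-curvilinear. The central step is to show that for a general center $\Lambda$ this locus has codimension $\geq 2$; this is where the hypothesis $n\le N-2$ is used, since it provides enough freedom in choosing $\Lambda$. Non-curvilinearity at a smooth point $x\in\pi^{-1}(v)$ means $\dim\!\big(T_xX\cap\overrightarrow{P_v}\big)\geq 2$, a second-order tangency condition, and a Kleiman--Bertini dimension count on the incidence variety of (points of $X^{\mathrm{reg}}$, their tangent spaces, projection centers) shows that for general $\Lambda$ the locus of such $x$ has codimension $\geq 2$ in $X$, hence codimension $\geq 2$ image in $\mathbb{P}^{n}$; this is the only form of the generic projection theorem we need, and I expect it to be the main obstacle. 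Granting it, for $v\in U_0\setminus\mathcal{S}$ a general linear form generates, so the locus $D_z$ where a fixed general $z$ fails to generate is a divisor, and the family $\{D_z\}$ is base-point-free over $\mathbb{P}^{n}\setminus\mathcal{S}$.

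Next I would assemble $V_1,V_2$. Choosing general linear forms $z_1,z_2$, base-point-freeness and a dimension count on the incidence $\{(v,z_1,z_2):v\in D_{z_1}\cap D_{z_2}\}$ show that $D_{z_1}\cap D_{z_2}$ has codimension $\geq 2$. Since $\mathcal{S}$ has codimension $\geq 2$, for $m\gg 0$ the ideal sheaf $\mathcal{I}_{\mathcal{S}}(m)$ is globally generated away from $\mathcal{S}$, so I can choose general hypersurfaces $\Theta_1,\Theta_2\in|\mathcal{I}_{\mathcal{S}}(m)|$ so that all four intersections among $D_{z_1},D_{z_2},\Theta_1,\Theta_2$ have codimension $\geq 2$. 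Set $H_i=D_{z_i}\cup\Theta_i$ and $V_i=\mathbb{P}^{n}\setminus H_i$; each $V_i$ is affine, and since $V_i$ avoids both $D_{z_i}$ and $\mathcal{S}\subseteq\Theta_i$, the form $z_i$ generates $\pi_{\ast}\cO_X$ at every point of $V_i$, giving $\pi_{\ast}\cO_X|_{V_i}\cong\cO_{V_i}[z_i]/(p_i(z_i))$ with $p_i$ the characteristic polynomial of multiplication by $z_i$, which is monic of degree $d$ by Nakayama and constancy of rank. As $V_1\cup V_2=\mathbb{P}^{n}\setminus(H_1\cap H_2)$ and every component of $H_1\cap H_2$ is one of the four intersections above, the complement of $V_1\cup V_2$ has codimension $\geq 2$, establishing (i).

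Finally, for (ii) I would take $\ell$ a general line in $\mathbb{P}^{n}$. A general line misses the codimension-$\geq 2$ set $H_1\cap H_2$, so $\ell\subseteq V_1\cup V_2$. Moreover $X_\ell=\pi^{-1}(\ell)=X\cap M_\ell$, where $M_\ell$ is the $(N-n+1)$-plane spanned by $\Lambda$ and the preimage of $\ell$; as $\ell$ varies, these $M_\ell$ form a family of linear sections whose common part is exactly $\Lambda$, which is disjoint from $X$, so $\{X\cap M_\ell\}$ is base-point-free. By Bertini the general member is a smooth curve contained in $X^{\mathrm{reg}}$: smoothness on the smooth locus is Bertini, and a general one-dimensional section avoids the codimension-$\geq 2$ set $X^{\mathrm{sing}}$ by a dimension count on the incidence of lines through points of $\pi(X^{\mathrm{sing}})$. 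Intersecting the two open dense conditions ``$\ell$ misses $H_1\cap H_2$'' and ``$X_\ell$ is smooth and contained in $X^{\mathrm{reg}}$'' yields the required line $\ell$, completing the proof.
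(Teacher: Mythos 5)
Your proposal is correct and reaches the same lemma by a genuinely different assembly, while the crux is the same as in the paper. Both arguments reduce to showing that, for a general center $\Lambda$ of dimension $N-n-1$, the locus of $v\in\P^n$ whose fiber fails to be curvilinear (equivalently, the locus of smooth $x\in X$ where the span of $\Lambda$ and $x$ meets $T_xX$ in dimension $\geq 2$), together with $\pi(X^{\rm sing})$, has codimension $\geq 2$. You assert this via a dimension count on the incidence variety of points, tangent $2$-planes, and centers and flag it as the main obstacle; that count is precisely where the paper spends the first half of its proof, and it does close: the relevant fiber of the incidence variety over the space of centers has dimension $n-4$, so the bad locus actually has codimension $\geq 4$ in $X^{\rm reg}$. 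Where you genuinely diverge is in producing $V_1,V_2$: you take two general linear forms $z_1,z_2$, show the joint degeneracy locus $D_{z_1}\cap D_{z_2}$ has codimension $\geq 2$ by a base-point-freeness argument, pad with hypersurfaces $\Theta_i$ through the bad set to make the complements affine, and only then choose a general line missing the codimension-$2$ set $H_1\cap H_2$. The paper instead fixes $V_1$ first, chooses the line $\ell$, and then builds $V_2$ adapted to the finitely many points of $\ell\setminus V_1$ by lifting generators of the finitely many fiber algebras $\cA|_{y_i}$ simultaneously. Your route buys the stronger conclusion that the complement of $V_1\cup V_2$ has codimension $\geq 2$ (a statement the introduction of the paper invokes but the paper's own proof of the lemma does not literally establish); the paper's route avoids having to control the intersection $D_{z_1}\cap D_{z_2}$ globally. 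Two small points to tidy in your write-up: the restriction of a linear form on $\P^N$ is a section of $\pi_{\ast}\cO_X(1)$ rather than of $\pi_{\ast}\cO_X$, so on $V_i$ you must either trivialize $\cO(1)$ or divide by a linear form and throw one more general hyperplane into each $H_i$ (harmless for the codimension count); and you should note that $D_{z_i}$ is a priori only defined over the locally free locus, so one takes its closure before forming $H_i$.
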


\begin{proof}
Consider the space of triples $P \subset X \times \PGr(\P^N,N-n-1) \times\PGr(\P^N,N-n)$ defined as the closure of
\[ P^o = \{ (x,\Lambda',\Lambda) : x \in X^{{\rm reg}}, \, x \in \Lambda, \, \Lambda' \subset \Lambda, \, \dim( T_x X \cap T_x\Lambda ) > 1 \}. \]
Let $P' \subset X \times \PGr(\P^N,N-n-1)$ be the image of $P$ under projection.  For a general $(x,\Lambda')$ in $P'$, we have $x \notin \Lambda'$ and therefore $\Lambda$ is the projective span of $\Lambda'$ and $x$.  So $\dim(P) = \dim(P')$.  Let $Q$ be the image of $P$ in $X \times \PGr(\P^N,N-n)$.  Then $P \to Q$ is generically a projective space bundle whose fibers have dimension $N-n$.  So $\dim(P) = \dim(Q) + N - n$.  We will compute the dimension of $Q$, using the projection to $X$.  Let $x \in X^{\rm reg}$.  Then the fiber of $Q$ over $x$ is birationally isomorphic to the set of pairs 
\[ \{ (\alpha, \Lambda) : \alpha \in \Gr(T_x X,2), x \in \Lambda \in \PGr(\P^N,N-n), \alpha \subset T_x\Lambda \}.\]
This set of pairs is a $\Gr(N-2,N-n-2)-$bundle over $\Gr(T_x X,2) \cong \Gr(n,2)$.  So it has dimension $2(n-2) + n(N-n-2)$.  Hence we see that $\dim(Q) = n + 2(n-2) + n(N-n-2)$.  Finally we deduce that 
\[ \dim(P') = N + 2(n-2) + n(N-n-2). \]
In what follows, we denote the linear span of $\Lambda' \in \PGr(\P^N,N-n-1)$ and $x \in X$ by $\Lambda'_x.$  If $P' \to \PGr(\P^N,N-n-1)$ is dominant, then a general fiber has dimension
\[ N + 2(n-2) + n(N-n-2) - (N-n)(n+1) = n- 4. \]

This means that for a general $(N-n-1)-$plane $\Lambda'$, we have $\dim( T_x\Lambda'_x \cap T_x X) \leq 1$ for all $x$ away from a subset of $X^{\rm reg}$ having codimension at least $4$.  If $P' \to \PGr(\P^N,N-n-1)$ is not dominant, then a general $(N-n-1)-$plane $\Lambda'$ would have the property that for any $x \in X^{\rm reg}$, $\dim( T_x\Lambda'_x \cap T_x X) \leq 1$.  In either case, for a general $(N-n-1)-$plane $\Lambda'$, we have that $\Lambda' \cap X = \varnothing$, $\Lambda'_x$ is transverse to $X$ at a general point, and away from a locus of codimension at least two, $T_x\Lambda'_x \cap T_x X$ is at most one dimensional.  Fixing such a $\Lambda'$ for the rest of the proof, we define $Z$ to be the union of $X^{\rm sing}$ and the set of all $x \in X$ for which at least one of these properties fails.  Our discussion thus far implies that $Z$ is of codimension at least 2 in $X.$

Let $\pi:X \to \P^n$ be the finite projection associated to $\Lambda'$, and let $\cA := \pi_*\cO_X$ be the associated locally free sheaf of $\cO_{\P^n}$-algebras.  Given $p \in \P^n \setminus \pi(Z)$ and $x \in \pi^{-1}(p),$ we see that since $T_x\Lambda'_x \cap T_x X$ is at most one-dimensional, the cotangent space to $\pi^{-1}(p)$ at $x$ is at most one-dimensional.  Hence $\cA|_p$ is a monogenic $\cO_{\P^{n},p}-$algebra for all $p \in \P^n \setminus \pi(Z)$ (this will be used shortly). 

Consider an affine open set $V \subset \P^n \setminus \pi(Z)$.    Let $y \in V$ be some point and let $z \in \cA(V)$ be an element such that $z|_y$ is a generator for $\cA|_y$.  Then there is a polynomial $p(z)$ (the characteristic polynomial of $z$) such that the map $\cO_V[z]/\langle p(z) \rangle \to \cA|_V$ is an isomorphism away from a divisor $D \subset V$.  Put $V_1 = V \setminus D$.  Note that $V_1$ is affine and $\cA$ is monogenic on $V_1$.  

Let $\ell \subset \P^n$ be a line which avoids $\pi(Z)$, has nonempty intersection with $V_1$, and is such that $\pi^{-1}(\ell)$ is smooth.  Let $y_1,\dotsc,y_r$ be the members of $\ell \cap (\P^n \setminus V_1)$, and let $V' \subset \P^n\setminus \pi(Z)$ be an affine open set that contains all of the $y_i$.  For each $i$, the algebra $\cA|_{y_i}$ is monogenic, so we can fix a generator $z_i$ of $\cA|_{y_i}$.  Since $\cA(V') \to \prod_{i=1}^r \cA|_{y_i}$ is surjective, there is an element $z \in \cA(V')$ whose restriction to $y_i$ is $z_i$.  Now as before there is a polynomial $q(z)$ such that the map $\cO_{V'}[z]/\langle q(z) \rangle \to \cA|_V$ is an isomorphism away from a divisor $D'\subset V'$.  By construction $y_1,\dotsc,y_r \notin D'$.  Put $V_2 = V' \setminus D'$.  Then $\cA$ is monogenic on $V_2,$ and moreover $\ell \subset V_1 \cup V_2$.
\end{proof}

\begin{prop}
\label{prop:acm-fil-ps}
Let $X \subseteq \mathbb{P}^{N}$ be a normal ACM variety of dimension $n \geq 2,$ and let $\pi : X \to \mathbb{P}^{n}$ be a finite linear projection.  If $\ell \subseteq \mathbb{P}^{n}$ is a line, there exists a filtered vector space $W$ and a characteristic morphism $\phi : \cA \to {\rm End}(W \otimes \cO_{\mathbb{P}^{n}})$ such that $\phi|_{\ell}$ is a filtered pseudomorphism.
\end{prop}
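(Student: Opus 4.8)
The plan is to realize $\phi$ as the characteristic morphism $C_\psi$ attached by Lemma~\ref{lem:t-inverted} to a graded $\chi_\cA$-Roby module $\psi : \cA \oplus \cO_{\P^n}\{T\} \to \End(W \tensor \cO_{\P^n})$ constructed on all of $\P^n$, where $\cA = \pi_*\cO_X$. Since $X$ is ACM and $\pi$ is finite, $\cA$ is locally free of rank $d = \deg X$ and has no intermediate cohomology on $\P^n$, because $H^i_*(\P^n,\cA) = H^i_*(X,\cO_X)$ vanishes for $0 < i < n$; Horrocks' splitting criterion then gives $\cA \cong \bigoplus_{i=1}^d \cO(-a_i)$ with each $a_i \geq 0$. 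This splitting is what supplies enough \emph{global} linear forms for the construction: one has $H^0(\cA^\vee) = \bigoplus_i H^0(\cO(a_i))$, and the multiplication maps $\Sym^j H^0(\cA^\vee) \to H^0(\Sym^j \cA^\vee)$ are surjective. Hence the coefficients ${\rm tr}(\wedge^j \rho_\cA)$ of the global characteristic polynomial $\chi_\cA \in H^0(\Sym^d(\cA^\vee \oplus \cO\{t\}))$ lie in the image of products of global linear forms, and we may write $\chi_\cA = \sum_k M_k$ as a finite sum of products $M_k = \prod_{j=1}^d \lambda_{k,j}$ of $d$ global linear forms $\lambda_{k,j} \in H^0(\cA^\vee) \oplus \bk\{t\}$.

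Given such a decomposition, Example~\ref{ex:monomial-Roby-module} (used with the linear forms $\lambda_{k,j}$ in place of dual basis vectors, so that $\psi_k(m)(w_j) = \lambda_{k,j}(m)\,w_{j+1}$) furnishes a graded $M_k$-Roby module $\psi_k : \cA \oplus \cO\{T\} \to \End(W_k \tensor \cO)$ on the $d$-dimensional space $W_k = \bk\{w_1,\dots,w_d\}$, and Proposition-Definition~\ref{prop:twisted-tensor-roby} assembles the $\Z/d\Z$-graded tensor product $\psi = \psi_1 \wh{\tensor}_\xi \cdots \wh{\tensor}_\xi \psi_s$ into a graded $\chi_\cA$-Roby module on $W = W_1 \tensor \cdots \tensor W_s$. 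Because $\chi_\cA$ is monic in $t$ we have $\psi(T)^d = {\rm Id}$, so $\psi(T)$ is invertible and Lemma~\ref{lem:t-inverted} yields the characteristic morphism $\phi = C_\psi$; as $\psi$ is graded, $\phi$ preserves the $\Z/d\Z$-grading on $W$. This disposes of the global assertion, and by Lemma~\ref{lem:char-mor-field} we may analyze the restriction to $\ell$ over the function field $\bk(\ell)$.

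The substance of the proposition is that $\phi|_\ell$ be a filtered pseudomorphism, and for this the decomposition above must be adapted to $\ell$. I first apply Lemma~\ref{lem:integral-curve-map} to $f = \pi|_{X_\ell} : X_\ell \to \ell \cong \P^1$, obtaining a graded $\chi_{\cA|_\ell}$-Roby module whose associated characteristic morphism is an $\cO_\ell$-algebra morphism; for a general $\ell$ the curve $X_\ell$ is smooth, and in general one passes to its smooth model, which is harmless in characteristic zero. The mechanism behind that algebra morphism is the totally split construction of Example~\ref{ex:totally-split-2} over the splitting cover of $X_\ell$, descended along an Ulrich sheaf for the curve that exists by \cite{ESW}. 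I then choose the global splitting $\chi_\cA = F_0 + F_1$ so that $F_0|_\ell = \chi_{\cA|_\ell}$ reproduces this split structure while $F_1 = \chi_\cA - F_0$ restricts to $0$ on $\ell$; grouping the factors $W_k$ accordingly, write $\psi = \psi' \wh{\tensor}_\xi \psi''$ with $\psi'$ the $F_0$-Roby module and $\psi''$ the $F_1$-Roby module on tensor factors $W'$ and $W''$. Because each $\lambda_{k,j}$ lifts a form on $\ell$ through the surjections $H^0(\cO(a_i)) \to H^0(\cO_\ell(a_i))$, the products making up $F_1$ can be arranged to contain a factor vanishing on $\ell$. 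The Roby relation then gives $\psi''(a,rT)^d|_\ell = F_1(a,r)|_\ell \cdot {\rm Id} = 0$, so the operators coming from the $W''$ factor are nilpotent along $\ell$, and the filtration $F^\bullet$ on $W = W' \tensor W''$ is the one induced by this nilpotent action on $W''$.

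With these choices, condition (i) is immediate: the endomorphisms in the image of $\phi|_\ell = C_{\psi|_\ell}$ preserve $F^\bullet$ since the $\psi'$-contributions respect the grading while the nilpotent $\psi''$-contributions strictly lower the filtration. Passing to the associated graded annihilates the latter, so ${\rm gr}^{F}\,\phi|_\ell$ depends only on $\psi'$ and is computed by $C_{\psi'}|_\ell$. Condition (ii) then amounts to arranging that this morphism is an $\cO_\ell$-algebra morphism, and this is exactly where Lemma~\ref{lem:integral-curve-map} enters: the principal part $F_0$ and its Roby module $\psi'$ must be chosen so that $C_{\psi'}|_\ell$ is identified with the split-cover, Ulrich-descent algebra morphism produced there. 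I expect this compatibility to be the main obstacle—on $\P^n$ the monomial-tensor construction yields only a characteristic morphism, since no Ulrich sheaf for the splitting cover is available, and the whole point is that restricting to the curve $\ell$, where Ulrich sheaves exist by \cite{ESW}, is what upgrades the associated graded to an algebra morphism. The twist bookkeeping in the Horrocks splitting and the grading conventions of Proposition-Definition~\ref{prop:twisted-tensor-roby} are secondary points to track.
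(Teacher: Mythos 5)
Your overall architecture matches the paper's: split $\chi_\cA$ into a principal part restricting to $\chi_{\cA|_\ell}$ on $\ell$ plus a remainder lying in the ideal of $\ell$, realize each piece as a graded Roby module, combine them with $\wh{\tensor}_\xi$, and read off the filtration from the nilpotency of the remainder along $\ell$. The genuine gap is exactly the one you flag yourself: you build the principal-part Roby module $\psi'$ out of monomial Roby modules attached to a decomposition of $F_0$ into products of global linear forms, and then hope that $C_{\psi'}|_\ell$ ``is identified with'' the algebra morphism produced by Lemma \ref{lem:integral-curve-map}. That identification will not happen: as you yourself observe, the monomial-tensor construction only ever yields a characteristic morphism, and there is no mechanism by which restricting a tensor product of monomial Roby modules to $\ell$ recovers the split-cover/Ulrich-descent module of Lemma \ref{lem:integral-curve-map}, whose construction depends on data (the splitting field of $K(X_\ell)/K(\ell)$ and an Ulrich sheaf on the splitting cover) invisible to any choice of linear forms $\lambda_{k,j}$. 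So condition (ii) of the filtered-pseudomorphism definition is left unproved at the decisive point.

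The paper closes this gap by never decomposing the principal part into monomials at all. Working with graded rings, write $\ell = \proj(\bk[x,y])$ and $R = \bk[x,y,z_2,\dotsc,z_n]$; Lemma \ref{lem:integral-curve-map} is applied \emph{first}, on the curve, to produce $\phi_\ell$ with $C_{\phi_\ell}$ an algebra morphism, and the principal part on $\P^n$ is then simply the base change $\phi_0 := \phi_\ell \tensor_{\bk[x,y]} R$, a Roby module for $\chi_0 := \chi_\ell$ viewed inside $\Sym^\bt_R(S^\vee)[t]$. Its restriction to $\ell$ is tautologically $\phi_\ell$, so the associated graded of $C_\phi|_\ell$ is $C_{\phi_\ell}$ and condition (ii) is automatic. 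Only the difference $\chi - \chi_0$, which lies in $(z_2,\dotsc,z_n)\Sym^\bt_R(S^\vee)[t]$, is expanded into monomials and handled by Example \ref{ex:monomial-Roby-module}; your nilpotency and filtration analysis of that part is essentially correct. A side remark: the Horrocks splitting of $\cA$ is not needed for the decomposition of the remainder --- freeness of $S$ over $R$ as a graded module, which is the ACM hypothesis, already supplies the dual basis $\Gamma_i$ and homogeneous coefficients in $R$.
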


\begin{proof}
Let $x,y,z_2,\dotsc,z_n$ be a coordinate system on $\P^n$ such that $\ell = V(z_2,\dotsc,z_n)$.  It is convenient to work with graded rings instead of schemes.  So let us view $\P^n = \proj(R)$ where $R = \bk[x,y,z_2,\dotsc,z_n]$ and $X = \proj(S)$ where $S$ is a graded Cohen-Macaulay $R$-algebra.  Given that $S$ is a free $R-$module, we fix a homogeneous basis $1=\gamma_1,\dotsc,\gamma_d$ for $S$ as an $R$-module.  Note that $\deg(\gamma_i) > 0$ for $i > 1$.  Moreover $\ell = \proj(\bk[x,y])$.  Let $\Sbr = S/(z_2,\dotsc,z_n)S$ and write $\chi(t)$ and $\chi_\ell(t)$ for the characteristic polynomials of $S$ over $R$ and $\Sbr$ over $\bk[x,y]$, respectively.

As in Lemma \ref{lem:integral-curve-map} we can find a graded $\chi_\ell(t)$-Roby module 
\[ \phi_\ell:\Sbr \oplus \bk[x,y]\cdot T \to \End(W) \tensor \bk[x,y] \]
such that $C_{\phi_\ell}$ is a morphism.  Recall that $\phi_\ell$ must have the property that 
\[
\phi_\ell(\alpha_1\gamma_1 + \dotsc + \alpha_d \gamma_d+\tau T)^d = \chi_\ell( \alpha_1\gamma_1 + \dotsc + \alpha_d \gamma_d + \tau t )\cdot \id_W
\]
where $\alpha_\bt,\tau \in \bk[x,y]$.  Since $\bk[x,y]$ is naturally a subring of $R$, we may define $\phi_0 := \phi_\ell \tensor_{\bk[x,y]} R$.  Write $\chi_0(t)$ for $\chi_\ell(t)$ viewed as an element of $\Sym^\bt_R(S^\vee)[t]$.   Then $\phi_0$ is a graded $\chi_0(t)$-Roby module.

Now we note that $\chi(t) - \chi_0(t) \in (z_\bt)\Sym^\bt_R(S^\vee)[t]$.  So let us write
\[ \chi(t) = \chi_0(t) + \sum_{i=0}^{d-1}\sum_{j=1}^{n_i}{t^i (c_{i,j,1}\Gamma_{k(i,j,1)})(c_{i,j,2} \Gamma_{k(i,j,2)})\dotsm (c_{i,j,d-i}\Gamma_{k(i,j,d-i)})} \]
where $\Gamma_1,\dotsc,\Gamma_d$ are the variables dual to the basis $\gamma_1,\dotsc,\gamma_d$ and $c_{i,j,s} \in R$ has degree equal to $\deg(\gamma_{k(i,j,s)})$.  Now put $m_{i,j} = t^i(c_{i,j,1}\Gamma_{k(i,j,1)})\dotsc (c_{i,j,d-i}\Gamma_{k(i,j,d-i)})$.

We recall the construction of Example \ref{ex:monomial-Roby-module}.  Define a map $\phi_{m_{i,j}}:S \oplus R\{T\} \to \End(R \tensor \bk^d)$ by
\[ \phi_{m_{i,j}}(\gamma_p)(\epsilon_r) = c_{i,j,r-i}\delta^p_{k(i,j,r-i)}\epsilon_{r+1}, \quad (i < r \leq d), \quad \phi_{m_{i,j}}(T)(\epsilon_r) = \begin{cases} \epsilon_{r+1} & 1 \leq r \leq i, \\ 0 & i < r \leq d \end{cases} \]
where $\epsilon_\bt$ are the standard basis vectors of $\bk^d$ and addition in the subscripts are modulo $d$.

Now consider $\phi = \phi_0 \widehat{\tensor}_\xi \phi_{m_{0,1}} \widehat{\tensor}_\xi \dotsm \widehat{\tensor}_\xi \phi_{m_{d,n_d}}$ which is a $\chi_\cA-$Roby action on $\Wt = (\bk^d)^{\tensor n_0+\dotsm+n_{d-1}} \tensor W \tensor R$.  By Proposition \ref{prop:twisted-tensor-roby}, this is a $\chi$-Roby module.  For each $i,j$ at least one of the $c_{i,j,r}$ must be in the ideal $(z_\bt)$.  After possibly reindexing, we may assume that $c_{i,j,d-i} \in (z_\bt)$.  

Consider the filtration $F^i\bk^d = \bk\{\epsilon_i,\dotsc,\epsilon_d\}$ on each ``monomial'' Roby module.  Then upon restriction to $\ell$, the Roby-action of $\Sbr \oplus \bk[x,y]\{T\}$ on $\bk^{\times d} \tensor \bk[x,y]$ via $\phi_{m_{i,j}}$ preserves $F^\bt$.  Moreover, if we put $F^{d+1}=0,$ we have that for each $i,$
\[ (\Sbr \oplus \bk[x,y]\{T\}) \cdot F^i \subset F^{i+1} \]
Let us equip $\Wt$ with the filtration $\widehat{F}^{\bullet}$ which is the tensor product of the filtrations above on its monomial factors and the trivial filtration on $W \tensor R$.  Then the action of $\Sbr \oplus \bk[x,y]\{T\}$ preserves $\widehat{F}^{\bullet}.$  The formation of the $\Z/d\Z-$graded tensor product is bi-functorial on Roby modules.  Since each $m_{i,j}$ vanishes in $\bk[x,y,t]$, we see that the minimal subquotients of $\widehat{F}^{\bullet}$ are simply the $\Z/d\Z-$graded tensor product of $\phi_\ell$ with a number of copies of the rank-one $0$-Roby module correpsonding to the zero map $\Sbr \oplus \bk[x,y]\{T\} \to \End(\bk[x,y])$.  Therefore the minimal subquotients are isomorphic to $\phi_\ell$.

Finally, the formation of $C_\phi$ is functorial.  So the action of $\Sbr$ via $C_\phi$ preserves the tensor product filtration.  Since the associated graded parts are isomorphic to $\phi_{\ell}$ and $C_{\phi_\ell}$ is a morphism, we find that $C_\phi|_\ell$ is a filtered pseudo-morphism.
\end{proof}

\begin{lem}
\label{lem:algebrize}
Let $\cA$ be an ACM sheaf of algebras on $\P^N$ and $\ell \subset \P^N$ a line.  Let $\cAh$ be the formal completion of $\cA$ along $\ell$ and let $\cEh$ be a coherent sheaf of $\cAh$-modules.  Then there is a coherent sheaf $\cE$ of $\cA$-modules whose completion is isomorphic to $\cEh$.
\end{lem}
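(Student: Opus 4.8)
The plan is to obtain $\cE$ from Grothendieck's formal existence theorem (formal GAGA), the part of whose content we need is the essential surjectivity of the completion functor $\cF \mapsto \widehat{\cF}$ from coherent $\cA$-modules to coherent $\cAh$-modules; granting this, $\cEh$ lies in the essential image and any preimage is the desired $\cE$. To set this up I would first unwind $\cEh$ into the usual inverse system: writing $\mathcal{I}$ for the ideal sheaf of $\ell$ and $\ell^{(m)} = V(\mathcal{I}^{m+1})$ for its $m$-th infinitesimal neighborhood, a coherent $\cAh$-module is the same datum as a compatible family $\{\cE_m\}_{m \geq 0}$ in which $\cE_m$ is a coherent module over $\cA_m := \cA \tensor_{\cO} \cO_{\ell^{(m)}}$, the transition maps induce isomorphisms $\cE_{m+1} \tensor_{\cO_{\ell^{(m+1)}}} \cO_{\ell^{(m)}} \cong \cE_m$, and $\cEh = \varprojlim_m \cE_m$. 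Since $\P^N$ is projective over $\bk$ and $\cA$, being ACM, is in particular coherent, the completion functor is defined on coherent $\cA$-modules; the goal becomes the production of a single coherent $\cA$-module $\cE$ with $\cE \tensor_{\cO} \cO_{\ell^{(m)}} \cong \cE_m$ for all $m$.

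The construction proceeds order by order and then by passing to the limit. Given an algebraic model realizing the truncation up to order $m$, the obstruction to extending it one step further, together with the torsor of such extensions, is governed by $\operatorname{Ext}$-groups computed on $\ell$ with coefficients twisted by $\operatorname{Sym}^{\bullet}$ of the conormal bundle of $\ell$. Here the ACM hypothesis on $\cA$ is what I would lean on: it forces the vanishing of the intermediate cohomology of $\cA$, and hence of the relevant obstruction groups, so that the truncations assemble compatibly and, crucially, the inverse limit of the algebraic models remains coherent rather than merely pro-coherent and retains its $\cA$-module structure. Properness of $\P^N$ is then what upgrades this formal, step-by-step data to an honest coherent sheaf, exactly as in the proof of formal GAGA.

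The main obstacle is precisely this essential surjectivity — the passage from the purely formal datum $\cEh$ to a genuine coherent $\cA$-module — since the order-by-order lifting only reconstructs the formal object unless properness and the ACM vanishing are invoked together to algebraize the limit. Once $\cE$ is produced on a neighborhood of $\ell$, I would extend it to all of $\P^N$ by taking a coherent $\cA$-submodule of the pushforward of $\cE$ from that neighborhood; since the completion along $\ell$ is insensitive to this extension, its completion is still isomorphic to $\cEh$, which finishes the construction.
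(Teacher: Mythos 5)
There is a genuine gap, and it sits exactly at the step you call "the main obstacle." The essential surjectivity of the completion functor is the entire content of the lemma, and it is not an instance of Grothendieck's existence theorem: that theorem concerns a scheme proper over an $I$-adically complete noetherian affine base, completed along the preimage of $V(I)$. The completion of $\P^N$ along a line $\ell$ is not of that form (the only proper map from $\P^N$ to an affine scheme is the constant map, whose fiber is all of $\P^N$), and coherent sheaves on formal neighborhoods of positive-dimensional subvarieties of a projective variety are not algebraizable in general. Your order-by-order discussion does not repair this: the compatible tower $\{\cE_m\}$ is already part of the datum $\cEh$, so there is no lifting problem and no obstruction class to kill, and the inverse limit of that tower is $\cEh$ itself --- asserting that this limit "remains coherent and retains its $\cA$-module structure" over $\P^N$ is a restatement of the conclusion, not an argument. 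The ACM hypothesis also enters your sketch in the wrong place; it has nothing to do with vanishing of obstructions to extending $\cE_m$ to $\cE_{m+1}$.

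What the paper actually does, and what is missing from your proposal, is an explicit algebraization of a presentation. First, Serre vanishing on $\spec$ of the Rees-type algebra $\bigoplus_m \cI^m/\cI^{m+1}$ produces a single twist $n_0$ with $\rmH^1\bigl((\cI^m\cEh/\cI^{m+1}\cEh)(n_0)\bigr)=0$ for all $m$ simultaneously; this makes $\rmH^0(\cEh(n_0)) \to \rmH^0((\cEh/\cI\cEh)(n_0))$ surjective and yields a presentation of $\cEh$ by finite sums of twists of $\cAh$. Second --- and this is where the ACM hypothesis is genuinely used --- $\cA$ is dissoci\'{e} (a direct sum of line bundles on $\P^N$), so $\rmH^0(\cA(k)) \to \rmH^0(\cAh(k))$ is an isomorphism for $k \gg 0$, because a section of $\cO(k)$ is determined by and recovered from its restriction to the $m$-th infinitesimal neighborhood of $\ell$ once $m>k$. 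This lets one descend the presentation matrix from $\cAh$ to $\cA$ and take $\cE$ to be the cokernel. Without some such use of the special structure of $\cA$ and of $\ell$, the algebraization claim is unsupported, so your proof does not go through as written.
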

\begin{proof}
Let $\cI$ be the ideal defining $\ell$.  Consider the sheaf of algebras $\cS = \oplus_{m \geq 0} \cI^m / \cI^{m+1}$ and the coherent graded $\cS$ module $\cF = \oplus_{m \geq 0} \cI^m \cEh  / \cI^{m+1} \cEh$.  Note that $\cS(1)$ is ample on $\spec(\cS)$, where $\cS(1)$ is the pullback of $\cO_\ell(1)$ under the natural map $\spec(\cS) \to \ell$.  It follows that for some $n_0 \gg 0$, $\rmH^1(\cF(n_0)) = 0$.  Observe that 
\[ \rmH^1(\spec(\cS), \cF(n_0) ) = \oplus_m \rmH^1(\ell, (\cI^m \cEh / \cI^{m+1} \cEh)(n_0)), \]
since $\spec(\cS) \to \ell$ is affine.  Therefore, for each $m$,
\[ \rmH^1( (\cI^m \cEh / \cI^{m+1} \cEh)(n_0)) = 0. \]
It follows that the maps
\[ \rmH^0( (\cEh / \cI^{m+1} \cEh)(n_0)) \to \rmH^0((\cEh / \cI^m \cEh)(n_0) ) \]
are surjective.  Therefore the map $\rmH^0( \cEh(n_0) ) \to \rmH^0( (\cEh / \cI \cEh )(n_0) )$ is surjective.  If $V \subset \rmH^0( ( \cEh / \cI \cEh)(n_0))$ is a finite dimensional space of sections such that 
\[ V \tensor \cA(-n_0) \to \cEh / \cI \cEh \]
is surjective and $V' \subset \rmH^0(\cEh(n_0))$ is a lift then 
\[ V' \tensor \cAh(-n_0) \to \cEh \]
is also surjective.  Indeed, the support of the cokernel is empty.  Iterating this arugment we obtain a presentation
\[ W \tensor \cAh(-n_1) \stackrel{\hat{\alpha}}{\to} V' \tensor \cAh(-n_0) \to \cEh \to 0. \]
Now consider the map $\rmH^0( \cA(k) ) \to \rmH^0( \cAh(k) )$.  We wish to show that it is surjective.  Since $\cA$ is dissoci\'{e}, it suffices to show that the maps $\rmH^0(\cO(k)) \to \rmH^0(\cOh(k))$ are surjective for all $k \gg 0$.  If $m > k$ then $\rmH^0(\cO(k)) \to \rmH^0((\cO/\cI^m)(k))$ is an isomorphism.  Hence $\rmH^0(\cA(k)) \to \rmH^0(\cAh(k))$ is an isomorphism.  Therefore there is a morphism
\[ \alpha: W \tensor \cA(-n_1) \to V' \tensor \cA(-n_0) \]
whose completion is $\hat{\alpha}$.  Thus we may take $\cE = \coker(\alpha)$.
\end{proof}

The next result completes the proof of Theorem \ref{main-theorem}.

\begin{thm}
\label{thm:reflexive-delta-ulrich}
Under the hypothesis of Proposition \ref{prop:acm-fil-ps}, there exists a 1-dimensional linear section $C \subseteq X$ and a reflexive sheaf $\cE$ on $X$ such that $\cE|_{C}$ is Ulrich.
\end{thm}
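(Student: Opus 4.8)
The plan is to build the desired sheaf as the algebraization of a module on the formal neighborhood of a suitable line, obtained by promoting the characteristic morphism of Proposition \ref{prop:acm-fil-ps} to an honest module structure. First I would fix the data. By Lemma \ref{lem:monogenic-opens}, choose the finite linear projection $\pi : X \to \P^{n}$ general, together with affine opens $V_{1}, V_{2}$ on which $\cA := \pi_{\ast}\cO_{X}$ is monogenic and a line $\ell \subseteq V_{1} \cup V_{2}$ with $X_{\ell} := \pi^{-1}(\ell)$ smooth and contained in $X^{\mathrm{reg}}$; the complement of $V_{1} \cup V_{2}$ has codimension at least $2$. Since $\ell$ is a line and $\pi$ is a linear projection, $C := \pi^{-1}(\ell) = X_\ell$ equals $X \cap \Lambda$ for a linear subspace $\Lambda$ of codimension $n-1$, so $C$ is the smooth $1$-dimensional linear section required by the statement. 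I would then apply Proposition \ref{prop:acm-fil-ps} to this $\ell$ to obtain a characteristic morphism $\phi : \cA \to \End(W \otimes \cO_{\P^{n}})$ whose restriction $\phi|_{\ell}$ is a filtered pseudomorphism for a filtration $F^{\bullet}$ of $W$. Because $X$ is ACM, $\cA$ is a direct sum of line bundles, so $\pi$ is finite and flat and Lemma \ref{lem:algebrize} is applicable; the problem thus reduces to producing a coherent sheaf $\cEh$ of $\cAh$-modules on the formal neighborhood of $\ell$ whose restriction to $\ell$ is the Ulrich module arising from the associated graded of $\phi|_{\ell}$.

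The local input is that over the monogenic locus a characteristic morphism already produces honest modules. On $V_{i}$ write $\cA|_{V_{i}} \cong \cO_{V_{i}}[z_{i}]/(p_{i}(z_{i}))$, where $p_{i}$ is the characteristic polynomial $\chi_{\cA}(t,z_{i})$ of the generator $z_{i}$. Setting $\Phi_{i} := \phi|_{V_{i}}(z_{i})$, the defining property of a characteristic morphism gives $p_{i}(\Phi_{i}) = \chi_{\cA}(\Phi_{i},z_{i}) = 0$, so declaring $z_{i}$ to act by $\Phi_{i}$ endows $\cE_{i} := W \otimes \cO_{V_{i}}$ with the structure of a locally Cohen--Macaulay $\cA|_{V_{i}}$-module. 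The difficulty is that $\cE_{1}$ and $\cE_{2}$ need not agree on $V_{1} \cap V_{2}$: the two module structures coincide there exactly when $\phi$ is an algebra morphism on the overlap, which a general characteristic morphism is not.

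The crux of the argument---and the step I expect to be the main obstacle---is to glue the completions of $\cE_{1}$ and $\cE_{2}$ into a single sheaf $\cEh$ of $\cAh$-modules along the formal neighborhood of $\ell$. Here the filtered pseudomorphism property is decisive. Along $\ell$ the two module structures induce the \emph{same} action on $\mathrm{gr}_{F^{\bullet}} W$, because the associated graded of $\phi|_{\ell}$ is an algebra morphism; by the proof of Proposition \ref{prop:acm-fil-ps} this graded module is a direct sum of copies of the Ulrich module produced by $C_{\phi_{\ell}}$ in Lemma \ref{lem:integral-curve-map}. Consequently, on the overlap the discrepancy between the two module structures is filtration-decreasing modulo the ideal $\cI$ of $\ell$, hence nilpotent in both the filtration and the $\cI$-adic directions. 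I would therefore trivialize the gluing cocycle by successive approximation: construct a filtration-preserving automorphism of $W \otimes \cO$ intertwining the two structures, correcting it order by order along powers of $\cI$, with each correction controlled by the strictly lower-triangular (and $\cI$-adically small) nature of the defect. Since the filtration of $W$ is finite and the formal neighborhood is $\cI$-adically complete, the relevant group of automorphisms congruent to the identity is pro-unipotent, so the iteration converges and yields $\cEh$, whose restriction to $\ell$ is by construction the Ulrich module on $C$. Verifying the vanishing of the obstruction at each stage is the technical heart of the proof.

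Finally I would algebraize and globalize. By Lemma \ref{lem:algebrize} there is a coherent sheaf of $\cA$-modules on $\P^{n}$ whose completion along $\ell$ is $\cEh$; under the equivalence between coherent $\cA$-modules and coherent $\cO_{X}$-modules afforded by the finite morphism $\pi$, this corresponds to a coherent sheaf $\cF$ on $X$. Because the completion of $\cF$ along $C$ is the vector bundle $\cEh$, the sheaf $\cF$ is locally free in a Zariski neighborhood of $C$. I would then set $\cE := \cF^{\vee\vee}$, which is reflexive since $X$ is normal; as $\cE$ agrees with $\cF$ wherever $\cF$ is reflexive, in particular near $C$, the restriction $\cE|_{C}$ coincides with $\cF|_{C} = \cEh \otimes_{\cAh} \cO_{C}$, which is Ulrich by construction. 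This produces the reflexive sheaf $\cE$ on $X$ and the linear section $C$ with $\cE|_{C}$ Ulrich, as required.
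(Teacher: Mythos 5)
Your overall architecture coincides with the paper's: local $\cA$-module structures on $W\otimes\cO_{V_i}$ obtained from the characteristic morphism via the Cayley--Hamilton identity on the monogenic opens, a gluing over the formal neighborhood of $\ell$, algebraization by Lemma \ref{lem:algebrize}, and passage to the reflexive hull. The difference---and the gap---is in the gluing step, which you reduce to a successive-approximation, pro-unipotent argument and then explicitly leave open (``verifying the vanishing of the obstruction at each stage is the technical heart of the proof''). That vanishing is not a formal consequence of unipotence: a filtered module is not determined by its associated graded unless the extension groups controlling each step vanish, so the fact that the discrepancy between the two module structures is filtration-decreasing modulo $\cI$ only guarantees that your iteration converges \emph{if} each successive correction exists; it does not produce the corrections. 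Completeness and finiteness of the filtration handle convergence, not solvability.

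What makes each obstruction vanish---and what the paper's proof actually uses---is projectivity over affine overlaps. Because $X_\ell$ is smooth, the subquotients $F^{i+1}W/F^{i}W\otimes\cO_\ell$ are locally free $\cA|_\ell$-modules; because $V_1\cap V_2\cap\ell$ is affine, the filtration on $\cE_{ij}|_\ell$ therefore splits as a filtration of $\cA|_\ell$-modules (the relevant $\Ext^1$ groups vanish by projectivity), and this is exactly what produces a filtered intertwiner $\psi_\ell:\cE_{12}|_\ell\to\cE_{21}|_\ell$ inducing the identity on subquotients. Then, since $\Vh_1\cap\Vh_2$ is an affine formal scheme and the $\cEh_i$ are projective $\cAh$-modules there (again because $X_\ell$ is smooth, so $\cE_i$ is locally free as an $\cA$-module near $\ell$), the isomorphism $\psi_\ell$ lifts across the $\cI$-adic filtration to an isomorphism $\cEh_{12}\to\cEh_{21}$, which is the gluing datum. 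So your plan is correct in outline, but the step you flag as the technical heart must be replaced by these two projectivity statements rather than a unipotent-cocycle heuristic. A smaller point: the triviality of the glued bundle on $\ell$ should be deduced from its being filtered with trivial associated graded, not asserted ``by construction''.
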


\begin{proof}
Let $\pi :X \to \P^n$ and $V_1,V_2$ be as in Lemma \ref{lem:monogenic-opens} and let $\ell \subset V_1 \cup V_2$ be a line.  Next, let $(W,F^\bt)$ be a filtered vector space and $\phi:\cA = {\pi}_*\cO_X \to \End(W\tensor\cO)$ be as in Proposition \ref{prop:acm-fil-ps} with respect to the line $\ell$.  Let $z_i \in \cA(V_i)$ be an algebra generator for $\cA$ over $V_i$.  Consider the algebra map $\phi_i$ defined by the diagram
\[ \xymatrix{ \cA_i = \cA_{V_i} & \ar[l]_(.6){\cong} \cO_{V_i}[z_i]/(p_i(z_i)) \ar[r] & \End(W \tensor\cO_{V_i}) } \]
where the second map is the unique algebra homomorphism which sends $z_i$ to $\phi(z_i)$.  Write $\cE_i$ for $W\tensor \cO_{V_i}$ with the $\cA_{V_i}$-module structure coming from $\phi_i$.  Note that $\cE_i$ is maximal Cohen-Macaulay over $V_i$ and therefore locally free on $V_i \setminus p(\sing(X))$; in particular, $\cE_i$ is locally free in a neighborhood of $\ell \cap V_i$.  For $i,j=1,2,~ i \neq j,$ we denote the restriction of the $\cA_{V_i}-$module $\cE_i$ to $V_{ij} = V_i \cap V_j$ by $\cE_{ij}.$ 

Let $F^\bt$ be the filtration on $W$.  By assumption, the pseudomorphism $\phi:\cA|_\ell \to \End(W\tensor \cO_\ell)$ induces an $\cO_{\ell}-$algebra morphism $\cA|_\ell \to \prod \End(F^{i+1}W/F^i W \tensor \cO_\ell)$.  Since $X_\ell$ is smooth, the $\cA|_\ell$-module structure on $F^{i+1} W / F^i W \tensor \cO_\ell$ is locally free.  Now, $V_1 \cap V_2 \cap \ell$ is affine.  This means that the filtration $F^\bt \cE_{ij}$ has projective subquotients.  So there is an isomorphism $\gr_{F^\bt} \cE_{ij} \to \cE_{ij}$ which is compatible with the filtration when $\gr_{F^\bt} \cE_{ij}$ is filtered by $F^k = \oplus_{k' \leq k} F^{k'} \cE_{ij} / F^{k'-1}\cE_{ij}$ and which induces the identity on subquotients.  Using these isomorphisms we produce a filtered $\cA_{12}$-module isomorphism
\[ \psi_\ell:\cE_{12}|_\ell \to \cE_{21}|_\ell \]
which induces the same isomorphism on associated graded modules as the identification $\cE_{12} = W \tensor \cO_{\ell \cap V_{12}} = \cE_{21}$.  Let $\cF$ be the vector bundle on $\ell$ obtained by gluing $\cE_1|_\ell$ to $\cE_2|_\ell$ along $\psi_\ell$.  Now since $\psi_\ell$ is filtered, $\cF$ is filtered.  By construction, the subquotients of $\cF$ for this filtration are the same as the subquotients for $W \tensor \cO_\ell$.  Hence the associated graded of $\cF$ is trivial.  It follows that $\cF$ is itself a trivial vector bundle.

Let $\Uh$ be the formal neighborhood of $\ell$ in $\P^n$.  Let $j:\Uh \to \P^n$ and put $\Vh_i = j^{-1}(V_i)$ and $\cAh = j^*\cA$.  Write $\cEh_i = j^*\cE_i$.  Since $\cE_i$ is a locally free $\cA_i$ module on a neighborhood of $\ell \cap V_i$ we see that $\cEh_i$ is a locally free $\cAh$-module.  Since $\P^n$ is separated, $V_{12}=V_1 \cap V_2$ is affine.  Hence $\Vh_{12}=\Vh_1 \cap \Vh_2$ is an affine formal scheme.  Hence the $\cEh_i$ are projective $\cAh_{V_i}$-modules.  Therefore the isomorphism $\psi_\ell$ lifts to an isomorphism
\[ \psi:\cEh_{12} \to \cEh_{21} \]
of $\cAh_{12}$-modules.   The isomorphism $\psi$ gives gluing data for gluing $\cEh_1$ to $\cEh_2$.  Let $\cEh$ be $\cAh$-module obtained by gluing $\cEh_1$ to $\cEh_2$ along $\psi$.  By Lemma \ref{lem:algebrize}, there is a sheaf $\cE$ of $\cA$-modules whose restriction to $\Uh$ is $\cEh$.  Since $\cEh$ is locally free, $\cE$ is locally free in a neighborhood of $\ell$.  So replacing $\cE^{\vv}$ is also isomorphic to $\cEh$ on $\Uh$.  Hence $\cE^{\vv}$ is the desired sheaf.
\end{proof}


\section{Generalities on $\delta-$Ulrich Sheaves} \label{sec:generalities}
In this section, $X \subseteq \mathbb{P}^{N}$ is a normal ACM variety of degree $d$ and dimension $n \geq 2$. 

\begin{lem}
\label{lem:rest-lin-sec}
Suppose that $\cE$ is a locally CM sheaf on $X$ whose restriction to a
linear section $Y$ of dimension at least 2 is Ulrich.  Then $\cE$ is Ulrich.
\end{lem}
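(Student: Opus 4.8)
The plan is to work with the cohomological characterization of Ulrich sheaves: a locally Cohen--Macaulay sheaf $\cF$ on a variety $Z \subseteq \mathbb{P}^{M}$ of dimension $D$ is Ulrich with respect to $\cO_{Z}(1)$ if and only if $H^{i}(Z,\cF(-j)) = 0$ for all $i$ and all $1 \le j \le D$. This is equivalent to the stated geometric characterization: for a general finite linear projection $\pi : Z \to \mathbb{P}^{D}$ the sheaf $\pi_{\ast}\cF$ is locally free on the smooth variety $\mathbb{P}^{D}$ (being Cohen--Macaulay of full dimension), one has $H^{i}(\mathbb{P}^{D},(\pi_{\ast}\cF)(-j)) = H^{i}(Z,\cF(-j))$ since $\pi$ is finite with $\pi^{\ast}\cO_{\mathbb{P}^{D}}(1) = \cO_{Z}(1)$, and a vector bundle on $\mathbb{P}^{D}$ is trivial exactly when these groups vanish for $1 \le j \le D$. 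So it suffices to verify this vanishing for $\cE$ on $X$.

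First I would reduce the general codimension to a single hyperplane section. Writing $\dim Y = n - c$, choose a flag of linear subspaces $\mathbb{P}^{N} = \Lambda_{0} \supset \Lambda_{1} \supset \dots \supset \Lambda_{c}$ with $\operatorname{codim}\Lambda_{i} = i$ and $X \cap \Lambda_{c} = Y$, and set $Y_{i} = X \cap \Lambda_{i}$. Since $Y_{c} = Y$ is nonempty of dimension $n - c$ and each hyperplane section lowers dimension by at most one, every $Y_{i}$ has dimension exactly $n - i$ and $Y_{i+1} \subsetneq Y_{i}$; as $\cE|_{Y_{i}}$ is locally CM (hence unmixed of pure dimension $n-i$), each defining linear form $h_{i+1}$ avoids its associated primes and is a non-zero-divisor, so $\cE|_{Y_{i+1}} = \cE|_{Y_{i}}/h_{i+1}\cE|_{Y_{i}}$ is again locally CM. Granting the codimension-one case, I would argue by ascending induction: $\cE|_{Y_{c}} = \cE|_{Y}$ is Ulrich by hypothesis, and whenever $\cE|_{Y_{i+1}}$ is Ulrich (with $\dim Y_{i+1} = n-i-1 \ge \dim Y \ge 2$) the codimension-one case gives that $\cE|_{Y_{i}}$ is Ulrich; taking $i = 0$ finishes the argument.

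For the codimension-one step, put $Z = Y_{i}$, $W = Y_{i+1} = Z \cap H$ with $m := \dim W \ge 2$ and $\dim Z = m+1$, and $\cF = \cE|_{Z}$, so that $\cF$ is locally CM and $\cF|_{W}$ is Ulrich. Because $h$ is a non-zero-divisor on $\cF$, twisting the structure sequence of $W$ yields, for every $j$, a short exact sequence $0 \to \cF(-j-1) \to \cF(-j) \to \cF|_{W}(-j) \to 0$. I would feed this into the long exact cohomology sequence and exploit the full strength of $\cF|_{W}$ being Ulrich, namely $H^{0}(W,\cF|_{W}(-j)) = 0$ for $j \ge 1$, $H^{m}(W,\cF|_{W}(-j)) = 0$ for $j \le m$, and $H^{i}(W,\cF|_{W}(-j)) = 0$ for $0 < i < m$ and all $j$ (all read off from the triviality of $\pi_{\ast}(\cF|_{W})$ on $\mathbb{P}^{m}$). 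For $2 \le i \le m-1$ the flanking terms vanish in every twist, so multiplication by $h$ is an isomorphism on $H^{i}(\cF(k))$ in all degrees, and comparison with Serre vanishing forces $H^{i}(\cF(k)) = 0$ for all $k$. The degrees $i = 0$ and $i = m+1$ follow by propagating the vanishing of $H^{0}(\cF(k))$ for $k \ll 0$ and of $H^{m+1}(\cF(k))$ for $k \gg 0$ through the sequence.

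The main obstacle is the two boundary cohomological degrees $i = 1$ and $i = m = \dim W$, and it is precisely here that the hypothesis $\dim Y \ge 2$ is indispensable. When $m \ge 2$, both $H^{1}(W,\cF|_{W}(-j))$ and $H^{m-1}(W,\cF|_{W}(-j))$ lie in the intermediate range of the Ulrich sheaf $\cF|_{W}$ and so vanish for every $j$; combining this with the sharp endpoint vanishings $H^{0}(\cF|_{W}(-j)) = 0$ $(j \ge 1)$ and $H^{m}(\cF|_{W}(-j)) = 0$ $(j \le m)$, the long exact sequence shows that $H^{1}(\cF(-j))$ is independent of $j$ for $j \ge 1$ and that $H^{m}(\cF(k))$ is independent of $k$ for $k \ge -m-1$. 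Anchoring the first family at $k \to -\infty$ and the second at $k \to +\infty$ (Serre vanishing in the two directions, valid since $1, m < \dim Z$) then gives the required vanishing for $1 \le j \le m+1$. If instead $\dim W = 1$, the groups $H^{1}(\cF|_{W}(-j))$ no longer vanish for all $j$ and this chase collapses, consistent with the failure of the statement for one-dimensional linear sections.
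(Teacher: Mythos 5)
Your argument is correct, and it reaches the conclusion by a genuinely different mechanism than the paper's, even though both reduce to a single hyperplane section by induction and both exploit the restriction exact sequence. The paper first pushes forward along a finite linear projection so that $\cE$ becomes a vector bundle on $\P^n$ trivial on a hyperplane $Y$, and then proves triviality by counting global sections: it shows $H^{0}(\cE) \to H^{0}(\cE|_{Y})$ is surjective by descending through the infinitesimal thickenings $mY$ via $0 \to \cE|_{Y}(-m) \to \cE|_{(m+1)Y} \to \cE|_{mY} \to 0$ (with $\dim Y \geq 2$ entering only through $h^{0}(\cE|_{Y}(-m)) = h^{1}(\cE|_{Y}(-m)) = 0$), and then invokes Proposition \ref{prop:ulrich-equiv-conditions} to pass from $h^{0}(\cE) = \rk(\cE)$ to triviality. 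You instead verify the full cohomological criterion $H^{i}(\cE(-j))=0$, $1 \leq j \leq \dim X$, in every degree $i$, chasing the twisted restriction sequence and anchoring each family of isomorphisms by Serre vanishing or duality at the appropriate end; here $\dim Y \geq 2$ enters through the vanishing of $H^{1}$ and $H^{\dim Y - 1}$ of the Ulrich restriction in all twists. Your route is longer but self-contained (it needs neither Proposition \ref{prop:ulrich-equiv-conditions} nor the thickening trick, only the standard cohomological characterization of Ulrich sheaves) and it isolates precisely the boundary cohomology groups that obstruct the statement for one-dimensional sections; the paper's route is shorter because it only has to control $H^{0}$.
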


\begin{proof}
Let $\pi : X \to \mathbb{P}^{n}$ be a finite linear projection.  Since $\cE$ is a locally CM sheaf on $X,$ the direct image $\pi_{\ast}\cE$ is a locally CM sheaf on a smooth variety, and is therefore locally free.  Replacing $\cE$ by $\pi_{\ast}\cE$ if necessary, we can assume without loss of generality that $\cE$ is locally free and $X=\mathbb{P}^{n}.$  We will show that $\cE$ is trivial.    

By induction on dimension we may assume that $\dim(Y) = n-1$ so that $Y$ is a hyperplane.  Our hypothesis on $\cE$ amounts to $\cE|_{Y}$ being a trivial bundle.  To show that $\cE$ is trivial, it is enough to check that $h^{0}(\cE)={\rm rk}(\cE).$  (See Proposition \ref{prop:ulrich-equiv-conditions}.)  We will do this by showing that restriction map $H^{0}(\cE) \to H^{0}(\cE|_Y)$ is surjective, since $h^{0}(\cE|_{Y})={\rm rk}(\cE).$

For each positive integer $j,$ let $jY$ the $(j-1)-$st order thickening of $Y.$  We claim that for all $m \geq 1,$ the restriction map $H^0(\cE|_{(m+1)Y}) \to H^0(\cE|_{mY})$ is an isomorphism.  Grant this for the time being.  If we fix $m_{0} \gg 0$ for which $H^{1}(\cE(-m_{0}))=0,$ then the restriction map $H^{0}(\cE) \to H^{0}(\cE|_{m_{0}Y})$ is surjective, and the claim yields the desired surjectivity of the map $H^{0}(\cE) \to H^{0}(\cE|_{Y}).$ 

Turning to the proof of this claim, an obvious snake lemma argument gives the following exact sequence for each $m \geq 1:$
\[ 0 \to \cE|_Y(-m) \to \cE|_{(m+1)Y} \to \cE|_{mY} \to 0. \]
Since $\cE|_Y$ is trivial and $\dim(Y) > 1,$ $h^{0}(\cE|_{Y}(-m))=h^1(\cE|_Y(-m)) = 0$, so the map $H^0(\cE|_{(m+1)Y}) \to H^0(\cE|_{mY})$ is an isomorphism.

\end{proof}

If $\cE$ is a $\delta-$Ulrich sheaf on $X$ then for a general 1-dimensional linear section $Y \subset X$, $\cE|_Y$ is Ulrich.  Indeed, if we consider a finite linear projection $\pi:X \to \P^n$ then $\pi_*\cE$ is a reflexive sheaf whose restriction to a given line is trivial.  Since trivial sheaves on $\P^1$ are rigid, the restriction of $\pi_*\cE$ to nearby lines is also trivial.  We also point out that if $X_H$ is a general hyperplane section of $X$ then $\cE|_{X_H}$ is $\delta$-Ulrich on $X_H$.

\begin{prop}
\label{prop:ulrich-equiv-conditions}
Let $\cE$ be a $\delta-$Ulrich sheaf of rank $r$ on $X.$  Then the following are equivalent.
\begin{itemize}
\item[(i)]{$\cE$ is Ulrich.}
\item[(ii)]{$h^{0}(\cE)=dr.$}
\end{itemize}
\end{prop}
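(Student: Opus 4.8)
The implication (i) $\Rightarrow$ (ii) is immediate from the geometric characterization of Ulrich sheaves: if $\cE$ is Ulrich then for a general finite linear projection $\pi : X \to \P^n$ the pushforward $\pi_*\cE$ is a trivial bundle, and since $\pi$ is finite of degree $d$ while $\cE$ has rank $r$, this bundle has rank $dr$, whence $h^0(\cE) = h^0(\pi_*\cE) = dr$. For the converse, the plan is to push everything down to $\P^n$: set $\cF := \pi_*\cE$, so that $h^0(\cF) = h^0(\cE) = dr = \rk(\cF)$, and prove that $\cF$ is trivial, which by the same characterization forces $\cE$ to be Ulrich. By the remark preceding the proposition, $\cF$ is reflexive on $\P^n$ and its restriction to a general line $\ell$ is trivial; in particular $\det\cF = \cO_{\P^n}$, since $\deg(\cF|_\ell) = 0$ and $\operatorname{Pic}(\P^n) = \mathbb{Z}$.

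The heart of the argument is to show that the evaluation map $\mathrm{ev} : H^0(\cF)\otimes \cO_{\P^n} \to \cF$ is an isomorphism. First I would record the vanishing $H^0(\cF|_L(-1)) = 0$ for a general linear subspace $L \subseteq \P^n$: the restriction of any section to a general line $\ell' \subset L$ lies in $H^0(\cO_{\ell'}(-1)^{\oplus dr}) = 0$, and since such lines cover a dense subset of $L$ and $\cF|_L$ is torsion-free, the section must vanish. Applying this along a general flag $\ell = L_1 \subset L_2 \subset \dots \subset L_n = \P^n$ together with the restriction sequences $0 \to \cF|_{L_{i+1}}(-1) \to \cF|_{L_{i+1}} \to \cF|_{L_i} \to 0$ (exact because $\cF|_{L_{i+1}}$ is torsion-free, so a general hyperplane is a non-zero-divisor), each map on $H^0$ is injective, hence the composite $r_\ell : H^0(\cF) \to H^0(\cF|_\ell)$ is injective. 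As both spaces have dimension $dr$, the map $r_\ell$ is an isomorphism, and therefore $\mathrm{ev}|_\ell : \cO_\ell^{\oplus dr} \to \cF|_\ell \cong \cO_\ell^{\oplus dr}$ is an isomorphism for general $\ell$.

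To globalize I would take determinants. Over the big open locus $U$ where the reflexive sheaf $\cF$ is locally free, $\det(\mathrm{ev})$ is a section of $(\det\cF)|_U = \cO_U$, hence extends to a constant on $\P^n$; its restriction to a general line $\ell \subset U$ equals $\det(\mathrm{ev}|_\ell) \neq 0$, so the constant is nonzero. Thus $\mathrm{ev}|_U$ is an isomorphism of vector bundles, giving $\cF|_U \cong \cO_U^{\oplus dr}$, and reflexivity of $\cF$ (which equals the pushforward of its restriction from $U$) upgrades this to $\cF \cong \cO_{\P^n}^{\oplus dr}$, so that $\cE$ is Ulrich. I expect the main obstacle to be precisely the step that $\mathrm{ev}$ is generically an isomorphism — equivalently, that the $dr$ global sections stay linearly independent after restriction to a general line — since a single line is not enough to detect a nonzero section; the iterated-hyperplane vanishing $H^0(\cF|_L(-1))=0$ is the device that combines the hypothesis $h^0(\cF) = \rk(\cF)$ with the $\delta$-Ulrich triviality on $\ell$, and the determinant computation is what allows reflexivity to finish the argument despite $\cF$ possibly failing to be locally free in codimension $\geq 3$.
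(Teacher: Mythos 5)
Your proof is correct, and it overlaps with the paper's argument only up to the key local statement before diverging. Both proofs make the same reduction to $(\P^n,\cO_{\P^n}(1))$ via $\pi_*$ and both turn on the fact that the evaluation map $ev : H^0 \otimes \cO \to \pi_*\cE$ restricts to an isomorphism on a general line $\ell$; for that step the paper simply asserts $h^0(\pi_*\cE \otimes \cI_\ell)=0$, whereas you actually prove the injectivity of $H^0(\pi_*\cE) \to H^0(\pi_*\cE|_\ell)$ by the chain of vanishings $H^0(\pi_*\cE|_{L_i}(-1))=0$ along a general flag --- a genuine filling-in of a detail the paper leaves unjustified. Where you really part ways is in globalizing. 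The paper notes that the cokernel of $ev$ misses $\ell$ and hence has codimension at least two, deduces $\Ext^1(\text{cokernel},\cO^r)=0$ by Serre duality, so the short exact sequence $0 \to \cO^r \to \pi_*\cE \to \text{cokernel} \to 0$ splits and torsion-freeness forces the cokernel to vanish. You instead use $\det(\pi_*\cE)\cong\cO$, extend $\det(ev)$ from the locally free locus $U$ to a nonzero constant on $\P^n$, conclude that $ev|_U$ is an isomorphism of bundles, and recover the sheaf on all of $\P^n$ from its restriction to $U$ by reflexivity. The paper's route is shorter and needs only torsion-freeness; yours trades the Ext/Serre-duality computation for the standard facts that a reflexive sheaf on a smooth variety is locally free outside a set of codimension at least three and is determined by its restriction to the complement of a codimension-two set. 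Both globalizations are sound, and your version has the minor advantage of making the injectivity of restriction to $\ell$ explicit.
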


\begin{proof}
The implication $(i) \Rightarrow (ii)$ is clear, so we will focus on proving $(ii) \Rightarrow (i).$  Assume $h^{0}(\cE)=dr.$  If $\pi : X \to \P^n$ is a finite linear projection, then $h^{0}(\cE) = h^{0}(\pi_{\ast}\cE),$ and $\cE$ is Ulrich if and only if $\pi_{\ast}\cE$ is Ulrich with respect to $\cO_{\P^{n}}(1),$ i.e. a trivial vector bundle.  We may then assume without loss of generality that $(X,\cO(1))=(\P^{n},\cO_{\P^{n}}(1))$ (in particular, $d=1$).  In this case the $\delta-$Ulrich condition on $\cE$ is equivalent to $\cE|_{\ell} \cong \cO_{\ell}^{r}$ for some line $\ell \subseteq \mathbb{P}^{n},$ and the Ulrich condition on $\cE$ is equivalent to $\cE \cong \cO_{\mathbb{P}^{n}}^{r}.$  

If $h^0(\cE) = r,$ then since $h^0(\cE \tensor \cI_\ell) = 0$ we find that the evaluation map $ev:\rmH^0(\cE) \tensor \cO \to \cE$ restricts to the evaluation map $\rmH^0(\cE|_\ell)\tensor\cO_\ell \to \cE|_\ell$, which is an isomorphism.  If $\cF$ is the cokernel of $ev,$ its support cannot intersect $\ell$ and therefore has codimension at least two.  We then have that $\Ext^1(\cF,\cO^r) \cong (H^{n-1}(\cF(-n-1))^{\ast})^{r} \cong 0.$  Since $\cE$ is reflexive, it is torsion-free, so we must have $\cF = 0,$ i.e. that $ev$ is an isomorphism.
\end{proof}

We shall now consider stability properties of $\delta-$Ulrich bundles.

\begin{lem}
\label{lem:psu-ss}
Let $\cE$ be a $\delta$-Ulrich sheaf on $X$.  Then $\cE$ is $\mu-$semistable, and $\omega_{X} \otimes \cE^{\vee}(n+1)$ is also $\delta$-Ulrich.
\end{lem}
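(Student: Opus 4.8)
The plan is to reduce both assertions to statements about the restriction of $\cE$ to a general $1$-dimensional linear section, exploiting the fact (established in the discussion following Lemma \ref{lem:rest-lin-sec}) that $\cE|_Y$ is Ulrich for a \emph{general} such section $Y$. I would fix once and for all a general $Y = X \cap \Lambda$, with $\Lambda$ a general linear subspace of codimension $n-1$. Since $X$ is normal its singular locus has codimension $\geq 2$, and since $\cE$ is reflexive its non-locally-free locus also has codimension $\geq 2$; hence a general $Y$ is a smooth curve contained in $X^{\mathrm{reg}}$ along which $\cE$ is locally free. The crucial numerical point is that the class of $Y$ on $X$ is $H^{n-1}$, so that for any saturated subsheaf $\cF$ the degree of its restriction to $Y$ computes its slope: $\deg(\cF|_Y) = c_1(\cF)\cdot H^{n-1} = \rk(\cF)\,\mu(\cF)$, and likewise $\mu(\cE|_Y)=\mu(\cE)$.

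For $\mu$-semistability I would argue by contradiction. Suppose $\cF \subset \cE$ is a saturated subsheaf with $\mu(\cF) > \mu(\cE)$; it suffices to rule this out, since passing to the saturation only increases the slope. Then the quotient $\cQ = \cE/\cF$ is torsion-free, hence locally free away from a locus of codimension $\geq 2$ in $X$, which a general $Y$ avoids. Consequently the restricted sequence $0 \to \cF|_Y \to \cE|_Y \to \cQ|_Y \to 0$ stays exact with locally free terms, so $\cF|_Y$ is a subbundle of $\cE|_Y$ with $\mu(\cF|_Y) = \mu(\cF) > \mu(\cE) = \mu(\cE|_Y)$. But $\cE|_Y$ is Ulrich, hence semistable on the curve $Y$, so every subbundle has slope at most $\mu(\cE|_Y)$ --- a contradiction.

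For the second assertion I would set $\cE' = \mathcal{H}om(\cE,\omega_X)(n+1)$, which is reflexive because $\omega_X$ is reflexive (being the canonical module of the normal Cohen--Macaulay variety $X$) and $\mathcal{H}om$ into a reflexive sheaf is reflexive. On the smooth locus $\omega_X$ is invertible, so restricting to a general $Y \subset X^{\mathrm{reg}}$ gives $\cE'|_Y = (\cE|_Y)^\vee \tensor \omega_X|_Y(n+1)$. Adjunction for the complete-intersection section $Y \subset X$ yields $\omega_X|_Y = \omega_Y(1-n)$, whence $\cE'|_Y = (\cE|_Y)^\vee \tensor \omega_Y(2)$. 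This is exactly the Ulrich dual on the curve $Y$ of the Ulrich bundle $\cE|_Y$, and is therefore again Ulrich by the standard duality for Ulrich sheaves (cf. \cite{ESW}). Hence $\cE'$ restricts to an Ulrich bundle on $Y$ and is $\delta$-Ulrich. One should also check that $\cE'$ agrees with $\omega_X \tensor \cE^\vee(n+1)$ of the statement away from codimension $2$, so that the two sheaves have the same reflexive hull and the restriction computation is unaffected.

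The main obstacle is purely one of genericity in the first part: one must ensure that a general linear-section curve simultaneously avoids all the relevant codimension-$\geq 2$ loci (the singular locus of $X$ together with the degeneracy loci of $\cE$, $\cF$ and $\cQ$), so that the restricted sequence remains exact and the slope identity $\mu(\cF|_Y) = \mu(\cF)$ holds verbatim. Because each of these loci has codimension at least two and the family of linear-section curves covers $X$, a general member does avoid them; notably, no restriction theorem of Mehta--Ramanathan type is needed, since we only use the elementary direction that destabilizing the total sheaf would destabilize its restriction to $Y$. The duality computation in the second part is then routine.
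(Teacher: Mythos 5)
Your proof is correct and follows essentially the same route as the paper: restrict to a general smooth $1$-dimensional linear section $Y$ avoiding the relevant codimension-$\geq 2$ loci, use semistability of the Ulrich bundle $\cE|_Y$ to get $\mu$-semistability of $\cE$ (the paper phrases this via torsion-free quotients rather than saturated subsheaves, a trivial dualization), and deduce the second claim from adjunction together with the fact that $\omega_C\otimes\cE'^{\vee}(2)$ is Ulrich whenever $\cE'$ is Ulrich on a curve $C$. The extra care you take with genericity and with the reflexive-hull comparison is sound but does not change the argument.
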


\begin{proof}
Let $\cF$ be a torsion-free quotient of $\cE,$ and let $Y \subset X$ be a smooth 1-dimensional linear section such that $\cE|_{Y}$ is Ulrich; we may also assume $Y$ avoids the singular loci of $\cE$ and $\cF.$  Then $\cF|_{Y}$ is a torsion-free quotient of the semistable bundle $\cE|_{Y};$ consequently $\mu(\cE)=\mu(\cE|_{Y}) \leq \mu(\cF_{Y}) = \mu(\cF).$

The second part of the statement follows from the adjunction formula and the fact that if $C$ is a curve embedded in projective space by $\cO_{C}(1)$ and $\cE'$ is an Ulrich bundle on $C,$ then $\omega_{C} \otimes \cE'^{\vee}(2)$ is also Ulrich.
\end{proof}

\begin{lem}
\label{lem:jh-filtration}
If $\cE$ is a $\delta$-Ulrich sheaf on $X$ which is strictly $\mu-$semistable, then there exists a $\mu-$stable subsheaf $\cE' \subset \cE$ which is $\delta-$Ulrich.
\end{lem}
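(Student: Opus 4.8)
The plan is to extract a $\mu$-stable subsheaf of the same slope (the first term of a Jordan--H\"older filtration) and then verify directly that its restriction to a general $1$-dimensional linear section is Ulrich. Since $\cE$ is $\mu$-semistable by Lemma \ref{lem:psu-ss} but not $\mu$-stable, among all nonzero proper subsheaves $\cF \subset \cE$ with $\mu(\cF) = \mu(\cE)$ I would choose one, $\cE'$, of minimal rank; any nonzero proper subsheaf of $\cE'$ then has strictly smaller slope, so $\cE'$ is $\mu$-stable. Replacing $\cE'$ by its saturation alters neither its slope nor its rank and preserves $\mu$-stability, and a saturated subsheaf of the reflexive sheaf $\cE$ is again reflexive (it is the kernel of the map from $\cE$ to the reflexive hull of the torsion-free quotient $\cE/\cE'$). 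Thus $\cE'$ is a $\mu$-stable reflexive subsheaf with $\mu(\cE') = \mu(\cE)$, and it remains to show that $\cE'$ is $\delta$-Ulrich.

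To this end I would fix a general smooth $1$-dimensional linear section $Y = X \cap H_1 \cap \dots \cap H_{n-1}$ chosen so that: $\cE|_Y$ is Ulrich (possible by the remark following Lemma \ref{lem:rest-lin-sec}, since general $1$-dimensional sections of a $\delta$-Ulrich sheaf are Ulrich); $Y$ lies in the regular locus and avoids the codimension-$\ge 2$ loci where $\cE$ or $\cE'$ fails to be locally free; and the restriction $\cE'|_Y \to \cE|_Y$ remains injective. The last point holds for general $Y$ because $\cE/\cE'$ is torsion-free, so the relevant $\mathrm{Tor}$-sheaf vanishes along a general complete intersection curve. Since $[Y] = H^{n-1}$ with $H = c_1(\cO_X(1))$, restriction multiplies slopes by a fixed positive constant, $\mu_Y(\cF|_Y) = c_1(\cF)\cdot H^{n-1}/\rk(\cF) = \mu(\cF)$ for any such $\cF$; in particular $\mu_Y(\cE'|_Y) = \mu(\cE') = \mu(\cE) = \mu_Y(\cE|_Y) =: \mu_0$, the common Ulrich slope on $Y$.

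Finally I would invoke the characterization of Ulrich bundles on a curve: a vector bundle $G$ on $Y$ is Ulrich if and only if $H^0(G(-1)) = H^1(G(-1)) = 0$, and whenever $\mu_Y(G) = \mu_0$ one has $\chi(G(-1)) = 0$, so $h^0(G(-1)) = h^1(G(-1))$ and the two vanishings become equivalent. Applying this to $G = \cE'|_Y$: the injection $\cE'|_Y(-1) \hookrightarrow \cE|_Y(-1)$ yields $H^0(\cE'|_Y(-1)) \hookrightarrow H^0(\cE|_Y(-1)) = 0$, and since $\mu_Y(\cE'|_Y) = \mu_0$ this forces $H^1(\cE'|_Y(-1)) = 0$ as well. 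Hence $\cE'|_Y$ is Ulrich and $\cE'$ is $\delta$-Ulrich. The main obstacle is organizing the general-position requirements on $Y$: one must simultaneously arrange that $\cE|_Y$ is Ulrich, that $Y$ meets none of the codimension-$\ge 2$ degeneracy loci of $\cE$ and $\cE'$, and that $\cE'|_Y \hookrightarrow \cE|_Y$ stays injective, so that the elementary slope and cohomology comparisons above are valid; once this is secured, the argument reduces to the inclusion $H^0(\cE'|_Y(-1)) \hookrightarrow H^0(\cE|_Y(-1))$ together with the equality of slopes.
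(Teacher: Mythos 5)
Your proposal is correct and follows essentially the same route as the paper: take a $\mu$-stable subsheaf of $\cE$ of the same slope, restrict to a general $1$-dimensional linear section $Y$ on which $\cE|_Y$ is Ulrich, deduce $h^0(\cE'|_Y(-1))=0$ from the inclusion into $\cE|_Y(-1)$, and get $h^1(\cE'|_Y(-1))=0$ from equality of slopes plus Riemann--Roch. Your added care about saturation, reflexivity, and the general-position requirements on $Y$ only makes explicit what the paper leaves implicit.
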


\begin{proof}
Let $\cE' \subset \cE$ be the maximal destabilizing subsheaf of $\cE.$  We will show that $\cE'$ is $\delta-$Ulrich.  Let $Y \subseteq X$ be a general 1-dimensional linear section of $\cE$ which avoids the singular loci of $\cE'$ and $\cE.$  Then $\cE'|_{Y}(-1)$ is a subsheaf of $\cE|_{Y}(-1).$  Since the latter is an Ulrich sheaf, we have that $h^{0}(\cE|_{Y}(-1))=0,$ and it follows that $h^{0}(\cE'|_{Y}(-1))=0$ as well.  The slope of $\cE'|_{Y}(-1)$ is equal to that of $\cE|_{Y}(-1),$ so Riemann-Roch implies that $h^{0}(\cE'|_{Y}(-1))=h^{1}(\cE'|_{Y}(-1))=0;$ therefore $\cE'|_{Y}$ is Ulrich.
\end{proof}

\begin{lem}
\label{lem:down-twists}
Let $\cE$ be a $\delta$-Ulrich sheaf on $X.$  Then for all $k \geq 1,$ we have $h^{0}(\cE(-k))=0.$
\end{lem}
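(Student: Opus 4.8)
The plan is to reduce the desired vanishing to the behavior of $\cE$ on a general curve section, where the Ulrich hypothesis gives direct control, and then to propagate the vanishing back to all of $X$ using torsion-freeness. Recall from the discussion following Lemma \ref{lem:rest-lin-sec} that for a \emph{general} $1$-dimensional linear section $Y \subset X$, the restriction $\cE|_{Y}$ is Ulrich on $Y$ with respect to $\cO_{Y}(1)$; moreover, as $Y$ varies, such sections sweep out a dense open subset of $X$, since through a general point of $X$ there passes a general linear section of the required dimension.

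The key curve-level input is that an Ulrich bundle $\cE'$ on a curve $Y$ satisfies $H^{0}(\cE'(-k)) = 0$ for every $k \geq 1$. Indeed, by the geometric characterization of Ulrich sheaves recalled in the introduction, a general finite linear projection $\pi_{Y} : Y \to \mathbb{P}^{1}$ satisfies $\pi_{Y\ast}\cE' \cong \cO_{\mathbb{P}^{1}}^{m}$ for some $m$; since $\pi_{Y}$ is finite and linear, the projection formula gives $H^{0}(\cE'(-k)) \cong H^{0}(\cO_{\mathbb{P}^{1}}^{m}(-k)) = 0$ for all $k \geq 1$.

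Granting these, I would argue as follows. Fix $k \geq 1$ and a section $s \in H^{0}(\cE(-k))$. For a general $1$-dimensional linear section $Y$ on which $\cE|_{Y}$ is Ulrich, the equality $\cO_{X}(1)|_{Y} = \cO_{Y}(1)$ yields $(\cE(-k))|_{Y} \cong (\cE|_{Y})(-k)$, so the restriction $s|_{Y}$ lies in $H^{0}((\cE|_{Y})(-k)) = 0$. Thus $s$ vanishes on every such $Y$. Since these sections cover a dense open subset $U \subseteq X$, we have $s|_{U} = 0$; and because $\cE(-k)$ is torsion-free (being a twist of the reflexive sheaf $\cE$), the image of $s$ cannot be a nonzero subsheaf supported on the proper closed set $X \setminus U$. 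Hence $s = 0$, giving $h^{0}(\cE(-k)) = 0$.

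The argument is largely formal once the two ingredients above are in place; I expect the only point requiring genuine care to be the passage from ``vanishing on a general section'' to ``global vanishing.'' This rests on the density of the loci swept out by general curve sections together with the torsion-freeness of $\cE$, whereas the curve-level vanishing for Ulrich bundles is standard.
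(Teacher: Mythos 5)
Your argument is correct, but it takes a genuinely different route from the paper. The paper proceeds by induction on $\dim(X)$: the exact sequence $0 \to \cE(-k-1) \to \cE(-k) \to \cE|_{X_H}(-k) \to 0$ for a general hyperplane section $X_H$ (whose restriction is again $\delta$-Ulrich, so inductively has no sections in negative twists) shows that $h^0(\cE(-k))$ is independent of $k \geq 1$, and then the $\mu$-semistability of $\cE$ from Lemma \ref{lem:psu-ss} kills $h^0(\cE(-k'))$ for a single large $k'$. You instead sweep $X$ out by the general $1$-dimensional linear sections $Y$ on which $\cE|_Y$ is Ulrich, use the projection formula on $\pi_Y: Y \to \P^1$ to get $H^0((\cE|_Y)(-k))=0$, and conclude by torsion-freeness of the reflexive sheaf $\cE$. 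Both ingredients you use are available in the paper (the genericity of Ulrich restrictions is stated after Lemma \ref{lem:rest-lin-sec}, and reflexivity is part of the definition), and your approach avoids both the induction and the semistability input, so it is arguably more elementary. The one step to phrase carefully is the passage from ``$s|_Y=0$ for all $Y$ covering a dense subset'' to ``$s=0$'': vanishing of $s$ in every fiber over $U$ gives $s|_{U}=0$ only where $\cE$ is locally free, but since $\cE$ is reflexive on normal $X$ it is locally free on a dense open set, and the same torsion-freeness appeal you already make then finishes the argument. The paper's route has the side benefit of recording that $h^0(\cE(-k))$ is constant in $k\geq 1$ and of exercising the semistability lemma, which is reused elsewhere in Section \ref{sec:surface}; yours buys a shorter, self-contained proof.
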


\begin{proof}
We proceed by induction on ${\rm dim}(X).$  Let $X_{H} \subset X$ be a general hyperplane section.  Then for each $k \geq 1$ we have the exact sequence
\[ 0 \to \cE(-k-1) \to \cE(-k) \to \cE|_{X_H}(-k) \to 0 \]
Since negative twists of an Ulrich sheaf have no global sections, our inductive hypothesis implies $h^{0}(\cE_{X_H}(-k))=0$; it follows that $h^{0}(\cE(-k)) = h^{0}(\cE(-1))$ for all $k \geq 1.$  We need only exhibit some $k' \geq 1$ such that $h^{0}(\cE(-k'))=0.$  Since $\cE$ and all its twists are $\mu$-semistable by Lemma \ref{lem:psu-ss}, any positive $k' > \mu(\cE)$ will do.
\end{proof}

\begin{rmk}
\label{rmk:lin-det-examples}
We exhibit for each $n \geq 2$ a smooth ACM variety of dimension $n$ admitting $\delta-$Ulrich sheaves which are not Ulrich.  Consider the Segre variety $X := \P^1 \times \P^{n-1} \subseteq \P^{2n-1},$ and let $H$ be the hyperplane class of $X$.  Recall that $X$ is cut out in $\mathbb{P}^{2n+1}$ by the maximal minors of the generic $2 \times n$ matrix of linear forms.  It follows from Proposition 2.8 of \cite{BHU} that the degeneracy locus $D \subseteq X$ of the first row of this matrix is a divisor whose associated line bundle $\cO_{X}(D)$ is an Ulrich line bundle on $X.$  The general 1-dimensional linear section $X' \subset X$ is a rational normal curve of degree $n,$ so if $\cL \in {\rm Pic}(X)$ satisfies $H^{n-1} \cdot \cL = 0,$ the restriction $\cL|_{X'}$ is the trivial bundle; in particular $\cL(D)$ is $\delta-$Ulrich.  Since the set $(H^{n-1})^{\perp}$ of all such $\cL$ is a corank-1 subgroup of ${\rm Pic}(X),$ we can choose $\cL \in (H^{n-1})^{\perp}$ such that $\cL(D)$ lies outside the effective cone of $X,$ e.g. satisfies $H^{0}(\cL(D))=0.$  In this case $\cL(D)$ is not Ulrich.
\end{rmk}


\section{The surface case} \label{sec:surface}
Throughout this section we consider a normal surface $X$ with a very ample line bundle $\cO_X(1)$.  We assume that $X$ has a $\delta$-Ulrich sheaf $\cE$, but not necessarily that $X$ is ACM.  

\subsection{Relation to Instanton Sheaves}
\label{subsec:instanton}

\begin{prop}
\label{prop:surface-instanton}
Let $\cE$ be a $\delta-$Ulrich sheaf of rank $r$ on $X,$ and let $\pi : X \to \P^{2}$ be a finite linear projection.  Then $\pi_{\ast}\cE$ is $\mu-$semistable, and it is an instanton sheaf on $\P^2,$ i.e. the cohomology of a monad of the form
\begin{equation}\label{eq:linear-monad} 
0 \to \cO_{\P^2}(-1)^{\oplus m} \to \cO_{\mathbb{P}^{2}}^{\oplus rd+2m} \to \cO_{\P^2}(1)^{\oplus m} \to 0
\end{equation}
where $d = \deg(X)$ and $m = h^1(\cE(-1))$.
\end{prop}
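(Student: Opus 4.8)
I want to show that for a finite linear projection $\pi:X\to\P^2$, the direct image $\cE':=\pi_*\cE$ is a $\mu$-semistable instanton sheaf, meaning it arises as the cohomology of a monad of the form (\ref{eq:linear-monad}). The strategy is to first establish the cohomological vanishings that characterize instanton sheaves, then build the monad from the associated Beilinson-type data, and finally verify the numerical invariants $rd+2m$ and $m=h^1(\cE(-1))$.

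First I would reduce everything to $\P^2$. Since $\cE$ is $\delta$-Ulrich on $X$ and $\cE|_Y$ is Ulrich on a general line section $Y$, the $\delta$-Ulrich condition translates (as explained in the introduction) into saying that $\cE'=\pi_*\cE$ restricts to a trivial bundle $\cO_\ell^{\oplus rd}$ on a general line $\ell\subset\P^2$; note the rank is $rd$ since $\pi$ has degree $d=\deg(X)$. Because $X$ is a normal surface and $\cE$ is $\delta$-Ulrich, $\cE$ is locally CM, so $\cE'$ is a locally CM sheaf on the smooth surface $\P^2$ and is therefore locally free. The $\mu$-semistability of $\cE'$ follows from Lemma \ref{lem:psu-ss} together with the fact that $\mu$-semistability is preserved under finite pushforward along $\pi$ (equivalently, one checks semistability of $\cE'$ directly: a destabilizing quotient would restrict to a destabilizing quotient on the trivial bundle $\cE'|_\ell$, which is impossible).

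The heart of the argument is producing the monad. Following the instanton-sheaf theory (in the sense of \cite{Jar}), the key is to verify the cohomological conditions
\[
h^0(\cE'(-1))=0,\qquad h^2(\cE'(-2))=0,
\]
together with $h^1(\cE'(-1))=m$, and then invoke the Beilinson spectral sequence for $\cE'(-1)$ on $\P^2$. The vanishing $h^0(\cE'(-1))=h^0(\cE(-1))=0$ is exactly Lemma \ref{lem:down-twists}, while $h^2(\cE'(-2))$ is Serre-dual to $h^0(\cE'^\vee(-1))$, which vanishes because $\cE'^\vee$ is also $\delta$-Ulrich up to twist (via the duality in Lemma \ref{lem:psu-ss}, since $\omega_X\otimes\cE^\vee(n+1)$ is $\delta$-Ulrich and $\pi_*$ commutes with the relevant dualizing operations). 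Feeding these vanishings into the Beilinson monad collapses it to a three-term complex with the outer terms $\cO_{\P^2}(-1)^{\oplus m}$ and $\cO_{\P^2}(1)^{\oplus m}$ governed by $h^1(\cE'(-1))=m$, and the middle term $\cO_{\P^2}^{\oplus a}$; a Riemann-Roch / Euler-characteristic count then forces $a=rd+2m$. The monad maps are linear because the only nonzero Ext-groups between the relevant twisted line bundles are in degree-one pieces.

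The main obstacle I anticipate is the precise cohomological bookkeeping needed to collapse the Beilinson spectral sequence and, in particular, confirming the symmetry $h^1$ on the two ends (that both outer terms have the same rank $m$). This symmetry should come from the self-duality built into the Ulrich/$\delta$-Ulrich condition: the duality statement in Lemma \ref{lem:psu-ss} ensures $\cE'$ and a suitable twist of $\cE'^\vee$ play symmetric roles, forcing $h^1(\cE'(-1))=h^1(\cE'^\vee(-1))$ and hence equal ranks at both ends. Once the vanishings and this duality are in place, the rank of the central term is pinned down purely by the alternating sum $\chi(\cE'(-1))=\sum(-1)^i h^i$, using that $\cE'|_\ell$ is trivial of rank $rd$ to compute the Chern classes of $\cE'$. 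I would present the vanishings as the conceptual crux and treat the rank count as a routine Euler-characteristic computation.
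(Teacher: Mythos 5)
Your argument is correct in substance, but it takes a noticeably longer route than the paper. The paper's proof is essentially three lines: $\pi_{\ast}\cE$ is reflexive and hence locally free on the smooth surface $\P^2$; its restriction to a general line is trivial, so it is $\mu$-semistable of degree $0$; and then Theorem 17 of \cite{Jar} is cited as a black box, which says precisely that a $\mu$-semistable torsion-free sheaf on $\P^2$ with $c_1=0$ is the cohomology of a linear monad with outer ranks $m=h^1(\cE(-1))$ (the identification $h^1(\cE(-1))=h^1(\pi_{\ast}\cE(-1))$ being automatic for a finite map). You instead unpack that citation: you verify the vanishings $h^0(\pi_{\ast}\cE(-1))=0$ and $h^2(\pi_{\ast}\cE(-2))=0$ and run the Beilinson spectral sequence to build the monad by hand, with the outer ranks equalized by Serre duality and an Euler-characteristic count. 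What your approach buys is self-containedness and an explicit explanation of where the numbers $m$ and $rd+2m$ come from; what it costs is that you are in effect reproving the $\P^2$ case of Jardim's theorem, and the symmetry of the two outer terms is cleanest via $\chi(\pi_{\ast}\cE(-1))=\chi(\pi_{\ast}\cE(-2))=-c_2$ rather than the somewhat vaguer appeal to ``self-duality of the Ulrich condition'' that you make. One small caution: $\mu$-semistability is \emph{not} in general preserved by finite pushforward, so the first half of your semistability argument is not valid as stated; the direct argument you give in the same sentence (a quotient of $\pi_{\ast}\cE$ restricts to a globally generated, hence nonnegative-degree, quotient of the trivial bundle $(\pi_{\ast}\cE)|_{\ell}$) is the correct one and is what the paper implicitly uses.
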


\begin{proof}
If $\cE$ is $\delta-$Ulrich, then $\pi_{\ast}\cE$ is a reflexive, and thus locally free, sheaf on $\P^{2}.$  Since the restriction of $\pi_{\ast}\cE$ to a general line is trivial, $\pi_{\ast}\cE$ is $\mu-$semistable of degree 0, and given that $h^{1}(\cE(-1))=h^{1}(\pi_{\ast}\cE(-1))$, Theorem 17 of \cite{Jar} implies our result.  
\end{proof}

The following statement can be obtained from a short elementary argument, but it seems appropriately stated as a consequence of Proposition \ref{prop:surface-instanton}.

\begin{cor}
\label{cor:delta-charge}
A $\delta-$Ulrich sheaf $\cE$ on $X$ is Ulrich if and only if $H^{1}(\cE(-1))=0.$ \hfill \qedsymbol
\end{cor}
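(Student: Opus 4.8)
The plan is to deduce Corollary~\ref{cor:delta-charge} directly from Proposition~\ref{prop:surface-instanton}, which already tells us that $\pi_{\ast}\cE$ is the cohomology of a monad of the form \eqref{eq:linear-monad} with $m = h^1(\cE(-1))$. The key observation is that the integer $m$ controls the two outer terms of the monad: when $m=0$, the monad degenerates and $\pi_{\ast}\cE$ is forced to be the trivial bundle $\cO_{\P^2}^{\oplus rd}$, which is precisely the condition that $\pi_{\ast}\cE$ be Ulrich on $\P^2$, hence that $\cE$ be Ulrich on $X$.

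First I would prove the implication $(\Leftarrow)$. Assume $H^1(\cE(-1))=0$, i.e. $m=0$. Then the monad \eqref{eq:linear-monad} reduces to $0 \to 0 \to \cO_{\P^2}^{\oplus rd} \to 0 \to 0$, so its cohomology $\pi_{\ast}\cE$ is literally $\cO_{\P^2}^{\oplus rd}$, a trivial vector bundle. By the geometric characterization recalled in the introduction, a sheaf on $\P^n$ is Ulrich with respect to $\cO_{\P^n}(1)$ if and only if it is trivial, and $\cE$ is Ulrich on $X$ if and only if $\pi_{\ast}\cE$ is Ulrich on $\P^2$ (this equivalence is used in the proof of Proposition~\ref{prop:ulrich-equiv-conditions}). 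Hence $\cE$ is Ulrich. For the converse $(\Rightarrow)$, suppose $\cE$ is Ulrich. Then $\pi_{\ast}\cE$ is a trivial bundle $\cO_{\P^2}^{\oplus rd}$, so $h^1(\pi_{\ast}\cE(-1)) = rd\cdot h^1(\cO_{\P^2}(-1)) = 0$; since $h^1(\cE(-1)) = h^1(\pi_{\ast}\cE(-1))$ by the affineness of $\pi$ (as noted in the proof of Proposition~\ref{prop:surface-instanton}), we conclude $H^1(\cE(-1))=0$.

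Alternatively, and perhaps more in the spirit of the ``short elementary argument'' mentioned in the text, one can avoid invoking the monad structure and instead argue purely numerically. Being Ulrich is equivalent to $h^0(\cE)=dr$ by Proposition~\ref{prop:ulrich-equiv-conditions}; meanwhile Lemma~\ref{lem:down-twists} gives $h^0(\cE(-1))=0$, and by the $\delta$-Ulrich property combined with Lemma~\ref{lem:psu-ss} one has $h^2(\cE(-1)) = h^0(\omega_X\otimes\cE^\vee(3-1))^\vee = 0$ since $\omega_X\otimes\cE^\vee(n+1)$ is again $\delta$-Ulrich and its appropriate negative twist has no sections. Then the Euler characteristic $\chi(\cE) - \chi(\cE(-1)) = \chi(\cE|_{X_H})$ reduces to a relation between $h^0(\cE)$ and $h^1(\cE(-1))$, so that $h^0(\cE)=dr$ holds exactly when $h^1(\cE(-1))=0$.

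I expect the main subtlety to lie not in either implication itself but in confirming the vanishing of the extreme cohomology groups $h^0(\cE(-1))$ and $h^2(\cE(-1))$ that make the Euler-characteristic bookkeeping in the elementary approach go through; these follow from Lemmas~\ref{lem:down-twists} and~\ref{lem:psu-ss} together with Serre duality, but care is needed because $\cE$ is only reflexive, not necessarily a vector bundle on $X$. Routing the proof through $\pi_{\ast}\cE$, which is genuinely locally free on the smooth surface $\P^2$, sidesteps this issue entirely, which is why the monad-based argument is the cleaner route and is the one I would present.
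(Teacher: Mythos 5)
Your main argument is exactly the paper's intended proof: the corollary is stated as an immediate consequence of Proposition~\ref{prop:surface-instanton}, with $m=h^1(\cE(-1))=0$ collapsing the monad \eqref{eq:linear-monad} so that $\pi_{\ast}\cE\cong\cO_{\P^2}^{\oplus rd}$ is trivial (hence Ulrich), and the converse following from $h^1(\cE(-1))=h^1(\pi_{\ast}\cE(-1))=0$ for a trivial bundle. Both directions check out, and your sketched elementary alternative (via the restriction sequence to a hyperplane section together with Lemmas~\ref{lem:down-twists} and~\ref{lem:psu-ss}) is the ``short elementary argument'' the paper alludes to, so there is nothing to correct.
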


At this point it is natural to ask if, given a $\delta-$Ulrich sheaf $\cE$ on $X,$ there is a $\delta-$Ulrich sheaf $\cE'$ on $X$ with $h^{1}(\cE'(-1)) < h^{1}(\cE(-1))$; an affirmative answer combined with Theorem \ref{main-theorem} would imply that every normal ACM surface admits an Ulrich sheaf.  The next result shows that it is enough to consider stable $\delta-$Ulrich bundles.

\begin{lem}
\label{lem:halving}
Let $X$ be a smooth ACM surface, and let $\cE$ be a $\delta$-Ulrich sheaf on $X$ which is strictly $\mu$-semistable with $h^{1}(\cE(-1)) = m.$  Then $X$ admits a locally free $\delta$-Ulrich sheaf $\cE'$ with ${\rm rk}(\cE') < {\rm rk}(\cE)$ and $h^{1}(\cE'(-1)) \leq \frac{m}{2}.$
\end{lem}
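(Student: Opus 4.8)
The plan is to realize the charge $m = h^1(\cE(-1))$ as additive along a two-step filtration of $\cE$ by $\delta$-Ulrich sheaves, so that one of the two factors carries at most half the charge, and then to repair local freeness by passing to a double dual. Throughout I use that $\cE$, being $\delta$-Ulrich, is reflexive and hence locally free on the smooth surface $X$, and that its restriction to a \emph{general} smooth $1$-dimensional linear section $Y$ is Ulrich.

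First I would produce the filtration. Since $\cE$ is strictly $\mu$-semistable, Lemma \ref{lem:jh-filtration} supplies a $\mu$-stable $\delta$-Ulrich subsheaf $\cE_1 \subsetneq \cE$; replacing $\cE_1$ by its saturation (which has the same slope by semistability and restricts to the same Ulrich bundle on a general $Y$, hence is still $\delta$-Ulrich) I may assume $\cE_2 := \cE/\cE_1$ is torsion-free. I then claim $\cE_2$ is $\delta$-Ulrich. Choosing a general $Y$ on which $\cE$ and $\cE_1$ restrict to Ulrich bundles and which avoids the finite loci where $\cE_1,\cE_2$ fail to be locally free, the restriction
\[ 0 \to \cE_1|_Y \to \cE|_Y \to \cE_2|_Y \to 0 \]
is a short exact sequence of vector bundles on the smooth curve $Y$. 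Pushing forward along a general finite linear projection $\bar\pi : Y \to \P^1$ (exact, since $\bar\pi$ is finite) realizes $\bar\pi_\ast(\cE_2|_Y)$ as a degree-$0$ quotient bundle of the trivial bundle $\bar\pi_\ast(\cE|_Y)$; such a quotient on $\P^1$ is trivial, so $\cE_2|_Y$ is Ulrich and $\cE_2$ is $\delta$-Ulrich.

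Next I would prove additivity of the charge. Twisting $0 \to \cE_1 \to \cE \to \cE_2 \to 0$ by $\cO_X(-1)$ and taking cohomology, Lemma \ref{lem:down-twists} forces the $H^0$ of every term to vanish. The remaining input is $H^2(\cE_1(-1)) = 0$: since $\cE_1$ is a saturated subsheaf of the locally free $\cE$ on a smooth surface it is reflexive, hence locally free, so Serre duality identifies $H^2(\cE_1(-1))$ with the dual of $H^0(\omega_X \otimes \cE_1^{\vee}(1))$; by Lemma \ref{lem:psu-ss} the sheaf $\omega_X \otimes \cE_1^{\vee}(3)$ is $\delta$-Ulrich, and $\omega_X \otimes \cE_1^{\vee}(1)$ is its twist down by $2$, so this $H^0$ vanishes by Lemma \ref{lem:down-twists}. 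The long exact sequence then collapses to
\[ 0 \to H^1(\cE_1(-1)) \to H^1(\cE(-1)) \to H^1(\cE_2(-1)) \to 0, \]
giving $m = m_1 + m_2$ with $m_i = h^1(\cE_i(-1)) \geq 0$, whence $\min(m_1,m_2) \leq m/2$.

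Finally I would extract $\cE'$. Pick $i$ with $m_i \leq m/2$ and set $\cE' = \cE_i^{\vv}$. As a double dual it is reflexive, hence locally free on the smooth surface $X$; it agrees with $\cE_i$ away from finitely many points, so its restriction to a general $Y$ is still Ulrich and $\cE'$ is $\delta$-Ulrich; and $\rk(\cE') = \rk(\cE_i) < \rk(\cE)$ because $\cE_1$ is a proper nonzero subsheaf. The inclusion $\cE_i \hookrightarrow \cE_i^{\vv}$ has finite-length cokernel $\mathcal{Q}$, and the cohomology sequence of $0 \to \cE_i(-1) \to \cE'(-1) \to \mathcal{Q} \to 0$ (with $H^1(\mathcal{Q}) = 0$) shows $H^1(\cE'(-1))$ is a quotient of $H^1(\cE_i(-1))$, so $h^1(\cE'(-1)) \le m_i \le m/2$. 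I expect the main obstacle to be the second step: guaranteeing that the quotient $\cE_2$ is genuinely $\delta$-Ulrich, which hinges on taking $\cE_1$ saturated so that restriction to a general $Y$ preserves both exactness and local freeness, together with securing the vanishing $H^2(\cE_1(-1))=0$ that makes the charge exactly additive.
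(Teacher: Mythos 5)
Your proposal is correct and follows essentially the same route as the paper: a Jordan--H\"older-type two-step filtration with $\mu$-stable $\delta$-Ulrich sub and $\delta$-Ulrich quotient, exact additivity of $h^1(-(-1))$ obtained from the vanishings $h^0(\cG(-1))=0$ (Lemma \ref{lem:down-twists}) and $h^2(\cF(-1))=0$ (Serre duality plus Lemmas \ref{lem:psu-ss} and \ref{lem:down-twists}), and finally passage to the reflexive hull of the factor carrying at most half the charge. Your write-up merely fills in details the paper leaves implicit (why the quotient restricts to an Ulrich bundle on a general curve section, and why the double dual does not increase $h^1$).
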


\begin{proof}
A $\mu$-Jordan-H\"{o}lder filtration of $\cE$ yields an exact sequence
\[ 0 \to \cF(-1) \to \cE(-1) \to \cG(-1) \to 0 \]
where $\cF$ and $\cG$ are both $\delta$-Ulrich sheaves and $\cF$ is both $\mu-$stable and locally free.  Since $h^{2}(\cF(-1))=h^{0}(\omega_{X} \otimes \cF^{\vee}(1)),$ and $\omega_{X} \otimes \cF^{\vee}(3)$ is $\delta$-Ulrich by Lemma \ref{lem:psu-ss}, we have from Lemma \ref{lem:down-twists} that $h^{2}(\cF(-1))=0.$  Another application of this Lemma implies that $h^{0}(\cG(-1))=0$.  We may then conclude that  $\min\{h^{1}(\cF(-1)),h^{1}(\cG(-1))\} \leq \frac{m}{2},$ and the result follows by taking the reflexive hull of $\cG$ if necessary.
\end{proof}

\begin{rmk}
Suppose that $X \subseteq \P^N$ is an ACM variety of dimension $n$ and let $\pi:X \to \P^n$ be a finite linear projection.  Jardim \cite{Jar} defines a notion of instanton sheaves on $\P^n$ for any $n > 1$; according to his definition, a $\mu-$semistable reflexive instanton sheaf is $\delta-$Ulrich.  However, only in the case $n = 2$ is the direct image $\pi_*\cE$ of a $\delta-$Ulrich sheaf on $X$ clearly an instanton sheaf.  For $n > 2$ an instanton sheaf must satisfy additional cohomology-vanishing which does not follow from having trivial restriction to a line.   Our construction does not appear to allow for any control over the cohomology of $\delta-$Ulrich sheaves.
\end{rmk}

\begin{rmk}
\label{rmk:non-loc-free}
Instanton sheaves can be used to show that a smooth projective threefold  $X \subseteq \P^N$ admitting an Ulrich sheaf (e.g. a smooth complete intersection of $N-3$ hypersurfaces, by \cite{HUB}) admits a $\delta-$Ulrich sheaf which is not locally free.  Let $\pi : X \to \P^3$ be a general linear projection, and let $z \in \P^3$ be a point not contained in the branch divisor of $\pi.$  By Example 5 in \cite{Jar} there exists a rank-3 $\mu-$semistable reflexive instanton (and thus $\delta-$Ulrich) sheaf $\cG$ on $\P^3$ whose singular locus is exactly $z.$  If $\cF$ is an Ulrich sheaf on $X,$ then $\cF$ is locally free by the smoothness of $X$, and $\cE := \cF \otimes \pi^{\ast}\cG$ is a reflexive sheaf on $X$ whose singular locus is exactly $\pi^{-1}(z).$  Since $\cG$ is locally free away from $z,$ the sheaves $\pi_{\ast}\cE$ and $\pi_{\ast}\cF \otimes \cG$ are isomorphic away from $z.$  Consequently there exists a line $\ell \subseteq \P^3$ not containing $z$ such that the restricton of $\pi_{\ast}\cE$ to $\ell$ is a trivial bundle.  It follows that $\cE$ is $\delta-$Ulrich. 
\end{rmk}

\subsection{Proof of Theorem B}

For the next two Lemmas, we consider a $\delta$-Ulrich sheaf $\cE$ on $\P^2$ for the canonical polarization $\cO_{\P^2}(1)$.  In general, it is difficult to understand how an abstract $\delta$-Ulrich sheaf will restrict to a curve in $\P^2$.  However, we can say something when the curve is a general smooth conic.
\begin{lem}
\label{lem:rest-to-conic}
Let $C \subset \P^2$ be a general smooth conic.  Then $\cE|_C$ is trivial.  
\end{lem}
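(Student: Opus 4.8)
The plan is to reduce the triviality of $\cE|_C$ to the vanishing of a single cohomology group and then to verify that vanishing by degenerating $C$ to a pair of general lines. First I would record that $\cE$ is locally free (being reflexive on the smooth surface $\P^2$), that it is $\mu$-semistable of degree $0$ by Proposition \ref{prop:surface-instanton}, and that its restriction to a \emph{general} line is trivial (as already noted for $\delta$-Ulrich sheaves, trivial bundles on $\P^1$ being rigid). Since $C \cong \P^1$, Grothendieck's theorem gives $\cE|_C \cong \bigoplus_{i=1}^{r} \cO_{\P^1}(a_i)$, and $c_1(\cE)=0$ forces $\sum_i a_i = 0$. The key reduction is the identity $h^0(C,\cE|_C \otimes \cO_{\P^1}(-1)) = \sum_i \max(a_i,0)$, so this number vanishes if and only if every $a_i \le 0$; combined with $\sum_i a_i = 0$ this is equivalent to $\cE|_C$ being trivial. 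Thus it suffices to prove $h^0(C,\cE|_C \otimes \cO_{\P^1}(-1)) = 0$ for general $C$.

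The main obstacle is that the twist $\cO_{\P^1}(-1)$ has odd degree on $C$ and is therefore not the restriction of any integral twist $\cO_{\P^2}(k)$, so I cannot control it directly from cohomology on $\P^2$. To get around this, I would degenerate: fix two general lines $\ell_1,\ell_2$ (so that each $\cE|_{\ell_i}$ is trivial), set $C_0 = \ell_1 \cup \ell_2$, and choose a flat pencil of conics with special fiber $C_0$ and general fiber a smooth conic $C_t$. On the total space I would produce a line bundle $\mathcal{N}$ whose restriction to each smooth fiber $C_t$ has degree $-1$ (hence is $\cO_{\P^1}(-1)$) and whose restriction to $C_0$ has multidegree $(-1,0)$, for instance $\mathcal{N} = \cO(-Z)$ for a section $Z$ meeting $C_0$ in a general smooth point $z_0 \in \ell_1$. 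Upper semicontinuity of $t \mapsto h^0(C_t,(\cE \otimes \mathcal{N})|_{C_t})$ then reduces the desired vanishing to the single computation $h^0(C_0,\cE|_{C_0} \otimes \mathcal{N}|_{C_0}) = 0$.

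That computation is immediate from the normalization sequence of the nodal curve $C_0$: a global section of $\cE|_{C_0}\otimes\mathcal{N}|_{C_0}$ is a pair of sections on $\ell_1$ and $\ell_2$ agreeing at the node $p$. On $\ell_1$ the sheaf is $\cO_{\ell_1}(-1)^{\oplus r}$, which has no sections, while on $\ell_2$ it is $\cO_{\ell_2}^{\oplus r}$; the agreement condition at $p$ then forces the constant section on $\ell_2$ to vanish at $p$, hence to be zero. Therefore $h^0(C_0,\cE|_{C_0}\otimes\mathcal{N}|_{C_0}) = 0$, and semicontinuity yields $h^0(C,\cE|_C \otimes \cO_{\P^1}(-1)) = 0$ for general $C$, completing the proof. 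I expect the only delicate points to be the construction of the degenerating family together with the relative degree-$(-1)$ line bundle of the prescribed central multidegree, and the verification that $\ell_1,\ell_2$ as well as the smooth members of the pencil lie in the open locus where $\cE$ restricts trivially; both can be arranged since that locus is dense.
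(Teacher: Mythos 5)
Your argument is correct, and it reaches the conclusion by a genuinely different mechanism than the paper, even though both proofs degenerate $C$ to a union $C_0=\ell_1\cup\ell_2$ of two general lines. The paper twists the Mayer--Vietoris sequence $0\to\cO_{C_0}\to\cO_{\ell_1}\oplus\cO_{\ell_2}\to\cO_{\ell_1\cap\ell_2}\to 0$ by $\cE$ to show $\cE|_{C_0}\cong\cO_{C_0}^{\oplus r}$, hence semistable, and then invokes openness of semistability in the fibers of the universal conic $\mathcal{C}\to|\cO_{\P^2}(2)|$; a semistable degree-$0$ bundle on $\P^1$ is trivial. You instead translate triviality into the single vanishing $h^0(\cE|_C\otimes\cO_{\P^1}(-1))=0$ and propagate it by semicontinuity of $h^0$, verifying it on $C_0$ via the normalization sequence. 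Your route avoids both semistability of sheaves on the singular curve $C_0$ and the openness-of-semistability input, at the cost of constructing the auxiliary relative degree-$(-1)$ line bundle $\mathcal{N}$ (which is unproblematic: take $\mathcal{N}=\cO(-E)$ for the exceptional section over a general base point of the pencil lying on $\ell_1$). You are also right that the odd twist is genuinely needed: $h^0(\cE|_C)=r$ alone does not detect triviality, as $\cO(1)\oplus\cO(-1)$ shows. One point to tighten at the end: semicontinuity over your pencil only yields the vanishing for general members of that one pencil, whereas the lemma concerns a general member of the five-dimensional system $|\cO_{\P^2}(2)|$. This is closed in one sentence: since $H^1(\P^1,\cO_{\P^1}^{\oplus r^2})=0$, triviality is an open condition in any flat family of bundles on smooth conics (the same rigidity the paper uses for lines), so the locus of smooth conics with $\cE|_C$ trivial is open, and being nonempty in the irreducible variety of smooth conics, it is dense.
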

\begin{proof}
Any smooth conic is isomorphic to $\P^1,$ so it is enough to show that $\cE|_{C}$ is of degree 0 and semistable when $C$ is a general element of $|\cO_{\P^2}(2)|.$  Since the restriction of $\cE$ to a general line is a trivial bundle, it follows that $\det(\cE)$ is trivial.  Consequently the restriction of $\cE$ to any plane curve has degree 0.  We now turn to semistability.  Consider the universal plane conic
\[ 
\mathcal{C} := \{ (p,C) \in \mathbb{P}^{2} \times |\cO_{\mathbb{P}^{2}}(2)| : p \in C \}
\] 
with its associated projections $p_{1} : \mathcal{C} \to \P^{2}, p_{2} : \mathcal{C} \to |\cO_{\P^{2}}(2)|.$  Our goal amounts to showing that the restriction of $p_{1}^{\ast}\cE$ to the general fiber of $p_{2}$ is semistable.  Given that this is an open condition on the fibers of $p_{2}$ (e.g. Proposition 2.3.1 in \cite{HL}) it is enough to check the semistability of $\cE|_{C_0}$ when $C_{0} = L \cup L'$ for distinct lines $L,L' \subseteq \P^{2}$ satisfying the property that $\cE|_{L}$ and $\cE|_{L'}$ are trivial.  If we twist the Mayer-Vietoris sequence
\[ 0 \to \cO_{C_0} \to \cO_{L} \oplus \cO_{L'} \to \cO_{L \cap L'} \to 0 \] 
by $\cE$ and take cohomology, we see that the induced difference map $H^{0}(\cE|_{L}) \oplus H^{0}(\cE|_{L'}) \to H^{0}(\cE|_{L \cap L'})$ is surjective.  Therefore $\cE|_{C_0}$ is locally free of rank ${\rm rk}(\cE)$ with ${\rm rk}(\cE)$ global sections, i.e. $\cE|_{C_0} \cong \cO_{C_0}^{\oplus {\rm rk}(\cE)}.$  In particular, $\cE|_{C_0}$ is semistable.
\end{proof}

\begin{lem}\label{lem:ec-formula}
Let $\cF$ be an $\cO(2)$-Ulrich sheaf on $\P^2$.  Then $\cE \tensor \cF$ is $\delta$-Ulrich for $\cO(2)$ and we have
\[ \chi(\cE \tensor \cF) = \rk(\cF)\left( \chi(\cE) + 3 \rk(\cE) \right). \]
\end{lem}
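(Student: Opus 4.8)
The plan is to treat the statement's two assertions separately: first that $\cE \otimes \cF$ is $\delta$-Ulrich for $\cO(2)$, and then the Euler characteristic identity, which I would deduce from Riemann--Roch on $\P^2$ together with the vanishing $c_1(\cE)=0$ and the value $\chi(\cF) = 4\rk(\cF)$.

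For the first assertion I would work with a general smooth conic $C \subset \P^2$, since such a conic is precisely a $1$-dimensional linear section of $(\P^2,\cO(2))$. Because $\P^2$ is smooth, the reflexive sheaf $\cE$ and the ACM sheaf $\cF$ are both locally free, so $\cE\otimes\cF$ is locally free and in particular reflexive, as required by the definition of a $\delta$-Ulrich sheaf. On the conic, Lemma \ref{lem:rest-to-conic} gives that $\cE|_C$ is trivial, while $\cF|_C$ is Ulrich for $\cO_C(2)$ because the restriction of an Ulrich sheaf to a general linear section is again Ulrich. Identifying $C\cong\P^1$ and noting that $\deg_C \cO_C(2) = \cO_{\P^2}(2)\cdot C = 4$, an Ulrich bundle on $(C,\cO_C(2))$ is forced to be $\cO_{\P^1}(3)^{\oplus \rk\cF}$; hence $(\cE\otimes\cF)|_C \cong \cE|_C \otimes \cF|_C \cong \cO_{\P^1}(3)^{\oplus\, \rk\cE\cdot\rk\cF}$, which is Ulrich for $\cO_C(2)$. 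Since a general conic meets both required conditions at once (each being general among conics), I conclude that $\cE\otimes\cF$ is $\delta$-Ulrich for $\cO(2)$.

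For the Euler characteristic I would set $r = \rk\cE$ and $s = \rk\cF$ and apply Riemann--Roch on $\P^2$, $\chi(\cG) = \rk\cG + \tfrac32\,(c_1(\cG)\cdot H) + \tfrac12\, c_1(\cG)^2 - c_2(\cG)$. The triviality of $\cE$ on a general line forces $c_1(\cE)=0$, so $\chi(\cE) = r - c_2(\cE)$, that is $c_2(\cE) = r - \chi(\cE)$. Expanding $\operatorname{ch}(\cE\otimes\cF) = \operatorname{ch}(\cE)\operatorname{ch}(\cF)$ and using $\operatorname{ch}_1(\cE)=0$, the terms of degree at most $2$ collapse to $\chi(\cE\otimes\cF) = r\,\chi(\cF) - s\,c_2(\cE)$, where $c_2(\cE)$ denotes its degree. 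Finally, since $\cF$ is Ulrich for $\cO(2)$ and $\deg_{\cO(2)}\P^2 = \cO(2)^2 = 4$, its higher cohomology vanishes and $\chi(\cF) = h^0(\cF) = 4s$. Substituting gives $\chi(\cE\otimes\cF) = 4rs - s(r-\chi(\cE)) = s(\chi(\cE)+3r)$, which is the asserted formula $\rk(\cF)(\chi(\cE)+3\rk(\cE))$.

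I expect the computation to be essentially routine; the one point that deserves care is the bookkeeping of polarizations. Restricting $\cF$, which is Ulrich for $\cO(2)$, to the conic $C$ must be measured against the degree-$4$ polarization $\cO_C(2)$ on $C\cong\P^1$ rather than against $\cO_C(1)$; getting the twist $\cO_{\P^1}(3)$ right is exactly what makes the tensor product Ulrich, and the same degree $4$ is what feeds $\chi(\cF)=4s$ into the Euler-characteristic identity. Thus the only genuine obstacle is keeping these normalizations consistent throughout.
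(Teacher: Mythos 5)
Your proposal is correct. The first half coincides with the paper's argument: both use Lemma \ref{lem:rest-to-conic} to get $\cE|_C$ trivial on a general conic $C$ and the fact that $\cF|_C$ is Ulrich for $\cO_C(2)$, so that $(\cE\tensor\cF)|_C\cong(\cF|_C)^{\oplus\rk(\cE)}$ is Ulrich; you additionally identify this bundle as $\cO_{\P^1}(3)^{\oplus\rk(\cE)\rk(\cF)}$, which is a harmless refinement. For the Euler characteristic the routes genuinely diverge: the paper first invokes Proposition \ref{prop:surface-instanton} to realize $\cE$ as the cohomology of a linear monad with $m=h^1(\cE(-1))$ copies of $\cO(\pm1)$, twists the monad by $\cF$, and reads off $\chi(\cE\tensor\cF)=\rk(\cE)\chi(\cF)-m\,\rk(\cF)$ before converting $m$ into $\rk(\cE)-\chi(\cE)$ via Riemann--Roch; you instead bypass the instanton structure entirely and get the same intermediate identity from multiplicativity of the Chern character together with $c_1(\cE)=0$, so that $\chi(\cE\tensor\cF)=\rk(\cE)\chi(\cF)-\rk(\cF)c_2(\cE)$ with $c_2(\cE)=\rk(\cE)-\chi(\cE)$. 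Your version is more self-contained (it needs only $\det(\cE)$ trivial, which follows from triviality on a general line, and $\chi(\cF)=4\rk(\cF)$), while the paper's version makes visible that the correction term is exactly the instanton charge $m=h^1(\cE(-1))$, which is the quantity the surrounding section is tracking. Both computations are valid and give the stated formula.
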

\begin{proof}
Since the restriction of $\cE$ and $\cF$ to a general conic are trivial and Ulrich, respectively, and Ulrich sheaves are stable under taking direct sums, we see that the restriction of $\cE \tensor \cF$ to a general conic is Ulrich.  Hence $\cE\tensor\cF$ is $\delta-$Ulrich for $\cO(2)$. 

Since $\cE$ is $\delta-$Ulrich for $\cO_{\P^2}(1),$ Proposition \ref{prop:surface-instanton} implies that it is the cohomology of a monad of the form
\[ 
0 \to \cO_{\P^2}(-1)^{m} \to \cO_{\mathbb{P}^{2}}^{\rk(\cE)+2m} \to \cO_{\P^2}(1)^{m} \to 0
\] 
where $m = h^1(\cE(-1))$.  Twisting by $\cF,$ we have that if $\ell \subseteq \mathbb{P}^{2}$ is a line, then
\begin{align*}
\chi(\cE \otimes \cF) & = (\rk(\cE)+2m) \cdot \chi(\cF) -m\cdot(\chi(\cF(-1))+\chi(\cF(1))) \\
& =\rk(\cE) \cdot \chi(\cF) + m \cdot ((\chi(\cF)-\chi(\cF(-1)))-(\chi(\cF(1))-\chi(\cF)))\\
& =\rk(\cE) \cdot \chi(\cF) + m \cdot (\chi(\cF|_{\ell})-\chi(\cF(1)|_{\ell}))\\
& = \rk(\cE) \cdot \chi(\cF) - m \cdot \rk(\cF)\\
\end{align*}
We have from Riemann-Roch that $\chi(\cE) = {\rm ch}_{2}(\cE)+\rk(\cE) = \rk(\cE)-m;$ also, the fact that $\cF$ is Ulrich with respect to $\cO(2)$ implies that $\chi(\cF)=4 \rk(\cF).$  Summarizing, we have that
\[ 
\chi(\cE \otimes \cF) = 4 \rk(\cE) \cdot \rk(\cF) + (\chi(\cE)-\rk(\cE)) \cdot \rk(\cF) = \rk(\cF)\left( \chi(\cE) + 3 \rk(\cE) \right).
\] 
\end{proof}

One way to explain the previous Lemma is that Ulrich sheaves on $\P^2$ for $\cO(2)$ are slightly positive.  (The main example of an $\cO(2)$-Ulrich sheaf is the tangent bundle $T\P^2$.)  So tensoring with such a sheaf should enlarge the space of sections while decreasing the the higher cohomology.  The Lemma makes this intuition precise and the next Theorem uses this idea to produce $\delta$-Ulrich sheaves with sections (after changing the polarization).

\begin{thm}\label{thm:good-sequence}
Assume that $(X,\cO_X(1))$ admits a $\delta$-Ulrich sheaf.  Then there exists a sequence $\cE_m$ of sheaves on $X$ such that $\cE_m$ is $\delta$-Ulrich for $\cO_X(2^m)$ and 
\[ \lim_{m \to \infty} \alpha(\cE_m) = 1. \]
In particular, for $m \gg 0$, $h^0(\cE_m) > 0$.
\end{thm}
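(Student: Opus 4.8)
The plan is to produce the sequence by repeatedly tensoring with the pullback of a fixed $\cO_{\P^2}(2)$-Ulrich bundle. I would set $\cE_0 = \cE$, fix an $\cO_{\P^2}(2)$-Ulrich bundle $\cF$ on $\P^2$ (for instance the tangent bundle $T\P^2$), and for each $m \geq 0$ choose a general finite linear projection $\pi_m : X \to \P^2$ with respect to $\cO_X(2^m)$, defining $\cE_{m+1} = \cE_m \tensor \pi_m^{\ast}\cF$. Since $\cE_0 = \cE$ is reflexive, hence locally CM because $X$ is a normal surface, and $\pi_m^{\ast}\cF$ is locally free, every $\cE_m$ is reflexive and locally CM, so $\cG_m := \pi_{m,\ast}\cE_m$ is locally free on $\P^2$. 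Everything then reduces to two assertions: that $\cE_m$ is $\delta$-Ulrich for $\cO_X(2^m)$, and that $\alpha(\cE_m) \to 1$.

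The main step, and the principal obstacle, is to show that $\cE_{m+1}$ is $\delta$-Ulrich for $\cO_X(2^{m+1})$. Because $\pi_m$ is taken with respect to $\cO_X(2^m)$, the pushforward $\cG_m$ is $\delta$-Ulrich on $\P^2$ for $\cO_{\P^2}(1)$, so Lemmas \ref{lem:rest-to-conic} and \ref{lem:ec-formula} apply to it. I would exhibit the required smooth $1$-dimensional linear section of $(X,\cO_X(2^{m+1}))$ as $C = \pi_m^{-1}(D)$ for a general conic $D \subset \P^2$: then $C$ lies in $|\pi_m^{\ast}\cO_{\P^2}(2)| = |\cO_X(2^{m+1})|$, so it is a hyperplane section in the $2^{m+1}$-Veronese embedding, and it is smooth (avoiding the finitely many singular points of $X$) for general $D$. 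Writing $f = \pi_m|_C : C \to D$ and combining the projection formula with flat base change — valid since $\pi_m$ is finite and flat — gives
\[ f_{\ast}(\cE_{m+1}|_C) \cong (\pi_{m,\ast}\cE_m)|_D \tensor \cF|_D = \cG_m|_D \tensor \cF|_D. \]
By Lemma \ref{lem:rest-to-conic}, $\cG_m|_D$ is trivial, while $\cF|_D$ is Ulrich on $D$ as the restriction of an Ulrich bundle to a general linear section; hence $f_{\ast}(\cE_{m+1}|_C)$ is a direct sum of copies of $\cF|_D$ and so Ulrich on $D$. Since $f$ is finite with $f^{\ast}\cO_D(1) = \cO_C(1)$, we get $H^{\bullet}((\cE_{m+1}|_C)(-1)) \cong H^{\bullet}((f_{\ast}(\cE_{m+1}|_C))(-1)) = 0$, so $\cE_{m+1}|_C$ is Ulrich on $C$ and $\cE_{m+1}$ is $\delta$-Ulrich for $\cO_X(2^{m+1})$. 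This descent of the Ulrich property from the conic $D$, where the surface-level Lemmas live, down to the curve $C \subset X$ via base change is the technical heart of the argument.

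For the limit I would sandwich $\alpha$. Put $P_m = \deg_{2^m}(X)\,\rk(\cE_m) = 4^m\deg(X)\,\rk(\cE_m)$, so $\alpha(\cE_m) = h^0(\cE_m)/P_m$. The bound $\alpha(\cE_m) \leq 1$ follows from the sequence $0 \to \cE_m(-1) \to \cE_m \to \cE_m|_Y \to 0$ for a general linear section $Y$, using $h^0(\cE_m(-1)) = 0$ (Lemma \ref{lem:down-twists}) and the fact that an Ulrich bundle on the curve $Y$ has $h^0(\cE_m|_Y) = \deg(Y)\rk(\cE_m) = P_m$; the reverse bound $\chi(\cE_m)/P_m \leq \alpha(\cE_m)$ follows once $h^2(\cE_m) = 0$, which I would obtain exactly as in Lemma \ref{lem:halving} from Serre duality together with Lemmas \ref{lem:psu-ss} and \ref{lem:down-twists}. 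It then suffices to show $\chi(\cE_m)/P_m \to 1$. Since $\chi$ is preserved by the finite pushforward $\pi_m$, Lemma \ref{lem:ec-formula} gives $\chi(\cE_{m+1}) = \chi(\cG_m \tensor \cF) = \rk(\cF)\big(\chi(\cG_m) + 3\rk(\cG_m)\big)$; substituting $\chi(\cG_m) = \chi(\cE_m)$, $\rk(\cG_m) = P_m$ and $P_{m+1} = 4\rk(\cF)P_m$ yields the recursion $a_{m+1} = \tfrac14 a_m + \tfrac34$ for $a_m := \chi(\cE_m)/P_m$, independently of $\rk(\cF)$. Hence $a_m - 1 = 4^{-m}(a_0 - 1) \to 0$, and the squeeze gives $\alpha(\cE_m) \to 1$; in particular $h^0(\cE_m) = \alpha(\cE_m)P_m > 0$ for $m \gg 0$.
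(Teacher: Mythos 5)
Your proposal is correct and follows essentially the same route as the paper: set $\cE_0=\cE$, define $\cE_{m+1}=\cE_m\otimes\pi_m^{\ast}\cF$ for a general finite linear projection with respect to $\cO_X(2^m)$, use the conic restriction (Lemma \ref{lem:rest-to-conic}) and the Euler-characteristic formula (Lemma \ref{lem:ec-formula}) to get the $\delta$-Ulrich property and the recursion $a_{m+1}=\tfrac14 a_m+\tfrac34$, then squeeze $\alpha(\cE_m)$ between $\chi(\cE_m)/P_m$ and $1$. You merely spell out two steps the paper leaves implicit (the descent of the Ulrich property from the conic $D$ to $C=\pi_m^{-1}(D)$, and the justification of the sandwich inequalities), and both are done correctly.
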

\begin{proof}
We will construct the sequence $\cE_m$ inductively as follows.  Put $\cE_0 = \cE$ and fix an $\cO(2)$-Ulrich sheaf $\cF$ on $\P^2$.  Now, assume we have constructed $\cE_0,\dotsm, \cE_m$ such that $\cE_i$ is $\delta$-Ulrich with respect to $\cO_X(2^i)$.  To construct $\cE_{m+1}$ we consider the embedding $X \to \P^N$ determined by $\cO_X(2^m)$.  Let $\pi:X \to \P^2$ be a finite map obtained as the compostion of $i$ with a general linear projection $\P^N \dashrightarrow \P^2$.  Define $\cE_{m+1} = \cE_m \tensor \pi^*\cF$.  By Lemma \ref{lem:ec-formula}, $\pi_\ast( \cE_m \tensor \pi^\ast \cF) = \pi_\ast(\cE_m) \tensor \cF$ is $\delta$-Ulrich for $\cO(2)$ since $\pi_\ast \cE_m$ is $\delta$-Ulrich for $\cO(1)$ and $\cF$ is Ulrich for $\cO(2)$.  Thus $\cE_{m+1}$ is $\delta$-Ulrich for $\pi^\ast\cO(2) = \cO_X(2^{m+1})$.  Moreover,
\[ \chi(\cE_{m+1}) = \rk(\cF)( \chi(\cE_m) + 3 \rk(\pi_\ast(\cE_m)) ) = \rk(\cF)(\chi(\cE_m) + 3 \rk(\cE_m)\deg(\cO_X(2^m))). \]
Since $\deg(\cO_X(2^{m+1})) = 4 \deg(\cO_X(2^m))$, we can write
\[ \frac{ \chi(\cE_{m+1}) }{ \rk(\cE_{m+1}) \deg(\cO_X(2^{m+1})) } = \frac{1}{4} \cdot \frac{ \chi(\cE_m)}{\rk(\cE_m)\deg(\cO_X(2^m))} + \frac{3}{4}. \]
Now it is clear that 
\[ \lim_{m \to \infty} \frac{ \chi(\cE_m) }{ \rk(\cE_m)\deg(\cO_X(2^m)) } = 1. \]
On the other hand we have
\[ \frac{ \chi(\cE_m) }{ \rk(\cE_m)\deg(\cO_X(2^m)) } \leq \alpha(\cE_m,\cO_X(2^m)) \leq 1 \]
and the Theorem follows immediately.
\end{proof}

\begin{rmk}
Suppose that $\cE$ is a $\delta$-Ulrich sheaf for $\cO(1)$ and $\cF$ is an $\cO(2)$-Ulrich sheaf on $\P^2$.  A calculation similar to those in the proof of Lemma \ref{lem:ec-formula} shows that 
\[ h^1(\cE \tensor \cF(-2)) = \rk(\cF) h^1(\cE(-1)) \]
Hence 
\[ \frac{h^1(\cE \tensor \cF(-2)}{\rk(\cE \tensor \cF)} = \frac{h^1(\cE(-1))}{\rk(\cE)}. \]
So while $\cE \tensor \cF$ is closer to being $\cO(2)$-Ulrich than $\cE$ is to being $\cO(1)$-Ulrich as measured by $\alpha(-)$, it is no closer at all by this other measure.
\end{rmk}

\begin{rmk}
The minimum rank of an $\cO(2)$-Ulrich bundle on $\P^2$ is two.  So the ranks of the sheaves $\cE_m$ in Theorem \ref{thm:good-sequence} are growing exponentially.  
\end{rmk}

\subsection{Intermediate Cohomology Modules}

Let $\cE$ be a $\delta$-Ulrich sheaf on $X$.  Our last result describes the structure of the graded module $H^{1}_{\ast}(\cE)$ in a way that refines Corollary \ref{cor:delta-charge}.  First we need a definition.

\begin{definition}
Let $S$ be a standard graded ring and $M$ a finitely generated $S$ module.  We say that $M$ has the \emph{Weak Lefschetz Property} \cite{MN} if there is a linear element $z \in S_1$ such that each multiplication map $\mu_z:M_i \to M_{i+1}$ has maximum rank.
\end{definition}

\begin{prop} 
\label{prop:coh-mod-WLP}
The graded module $\rmH^1_\ast(\cE)$ over the graded ring $S_X = \rmH^0_\ast(\cO_X)$ has the Weak Lefschetz property.  Moreover, the following inequalities hold:
\begin{align*}
& h^1(\cE(i)) \leq h^1(\cE(i+1)) \quad (i \leq -2) \\
& h^1(\cE(i)) \geq h^1(\cE(i+1)) \quad (i \geq -2 ) \\
\end{align*}
\end{prop}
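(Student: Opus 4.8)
The plan is to prove both the Weak Lefschetz Property (WLP) and the two inequalities for $\rmH^1_\ast(\cE)$ simultaneously, since they are closely intertwined: the WLP for a general linear form $z$ says that each multiplication map $\mu_z : H^1(\cE(i)) \to H^1(\cE(i+1))$ has maximal rank, and the inequalities assert that these maps are injective for $i \le -2$ and surjective for $i \ge -2$. First I would choose $z \in S_1$ to be the linear form cutting out a general hyperplane section $X_H \subseteq X$, chosen generally enough that $\cE|_{X_H}$ is again $\delta$-Ulrich (as noted in the remark following Lemma~\ref{lem:rest-lin-sec}) and that $X_H$ avoids the singular locus of $\cE$. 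The key tool is the short exact sequence obtained by twisting $0 \to \cO_X(-1) \xrightarrow{z} \cO_X \to \cO_{X_H} \to 0$ by $\cE(i+1)$:
\[ 0 \to \cE(i) \xrightarrow{z} \cE(i+1) \to \cE|_{X_H}(i+1) \to 0. \]
Taking the long exact sequence in cohomology, the map $\mu_z : H^1(\cE(i)) \to H^1(\cE(i+1))$ sits between $H^0(\cE|_{X_H}(i+1))$ on the left and $H^1(\cE|_{X_H}(i+1))$ on the right, with kernel a quotient of $H^0(\cE|_{X_H}(i+1))$ and cokernel injecting into $H^1(\cE|_{X_H}(i+1))$.

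The heart of the argument is then to control $H^0$ and $H^1$ of twists of the restricted sheaf $\cE|_{X_H}$, which is a $\delta$-Ulrich sheaf on the \emph{curve} $X_H$ — but here I must be careful, since $X_H$ is a curve and a $\delta$-Ulrich sheaf on a curve is exactly an Ulrich bundle. Thus $\cE|_{X_H}$ is Ulrich on $X_H$, and for an Ulrich sheaf on a curve of dimension $1$ the vanishing theorems are sharp: negative twists have no sections (Lemma~\ref{lem:down-twists} applied on $X_H$, or the defining Ulrich property), and by Serre duality positive-enough twists have no $H^1$. Concretely, for $\cE|_{X_H}$ Ulrich I expect $H^0(\cE|_{X_H}(j)) = 0$ for $j \le -1$ and $H^1(\cE|_{X_H}(j)) = 0$ for $j \ge -1$, with the single ``transitional'' degree being $j = -1$. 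Reindexing via $j = i+1$, the threshold $j = -1$ corresponds precisely to $i = -2$, which is exactly where the two inequalities switch direction. For $i \le -2$ we have $i+1 \le -1$, so $H^0(\cE|_{X_H}(i+1)) = 0$ forces $\mu_z$ to be injective, giving $h^1(\cE(i)) \le h^1(\cE(i+1))$; for $i \ge -2$ we have $i+1 \ge -1$, so $H^1(\cE|_{X_H}(i+1)) = 0$ forces $\mu_z$ to be surjective, giving $h^1(\cE(i)) \ge h^1(\cE(i+1))$.

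Once injectivity holds for all $i \le -2$ and surjectivity for all $i \ge -2$ with respect to this single chosen $z$, the WLP follows immediately, since every $\mu_z$ then has maximal rank (it is either injective or surjective). The inequalities follow from the same dichotomy by comparing dimensions across an injection or a surjection. The main obstacle I anticipate is \emph{pinning down the exact vanishing ranges} for $H^0$ and $H^1$ of twists of the Ulrich bundle $\cE|_{X_H}$ on the curve $X_H$, and in particular verifying that the unique non-vanishing degree is $-1$ on both sides so that the transition lands precisely at $i = -2$. This requires using the normalization of the Ulrich property on $X_H$ — an Ulrich bundle is initialized ACM with the maximal number of sections — together with Serre duality on $X_H$ and the fact (from Lemma~\ref{lem:psu-ss}) that the Ulrich dual $\omega_{X_H} \otimes (\cE|_{X_H})^\vee(2)$ is again Ulrich, which converts the $H^1$ vanishing into an $H^0$ vanishing for the dual and lets me invoke Lemma~\ref{lem:down-twists}. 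A secondary technical point is ensuring the general hyperplane $X_H$ can simultaneously be chosen smooth (for Serre duality), avoiding $\sing(\cE)$, and general enough that $\cE|_{X_H}$ is $\delta$-Ulrich; this is a standard Bertini-type genericity argument given that these are all open conditions.
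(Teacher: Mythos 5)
Your proposal is correct and follows essentially the same route as the paper: restrict to a general hyperplane section $X_H$ on which $\cE$ is Ulrich, use the long exact sequence of $0 \to \cE(i) \to \cE(i+1) \to \cE|_{X_H}(i+1) \to 0$, and invoke the vanishings $H^0(\cE|_{X_H}(j))=0$ for $j\le -1$ and $H^1(\cE|_{X_H}(j))=0$ for $j\ge -1$ to get injectivity of $\mu_z$ for $i\le -2$ and surjectivity for $i\ge -2$. The paper simply asserts these vanishings as standard properties of Ulrich bundles on curves rather than rederiving them via Serre duality, but the argument is the same.
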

\begin{proof}
Let $H \subset X$ be a hyperplane section (with respect to $\cO_X(1)$) such that $\cE|_H$ is Ulrich and $z \in \rmH^0(\cO_X(1))$ a defining section.  Then consider the long exact sequence
\begin{equation}\label{eq:long-exact-hyperplane}\tag{$\ast$}
\xymatrix{ \rmH^0(\cE|_H(i+1)) \ar[r] & \rmH^1(\cE(i)) \ar[r]_(.45){\mu_z} & \rmH^1(\cE(i+1)) \ar[r] & \rmH^1(\cE|_H(i+1)) }
\end{equation}
on cohomology induced by
\[ 0 \to \cE(i) \to \cE(i+1) \to \cE|_H(i+1) \to 0. \]
Recall that since $\cE|_H$ is Ulrich, we have
\[ \rmH^0(\cE|_H(i+1)) = 0, \, (i \leq -2), \quad \text{and} \quad \rmH^1(\cE|_H(i+1)) = 0, \, (i \geq -2). \]
So if $i < -1$, the map $\mu_z$ in \eqref{eq:long-exact-hyperplane} is injective, and if $i > -3$ it is surjective.  
\end{proof}

\begin{rmk}
An immediate consequence of Proposition \ref{prop:coh-mod-WLP} is that $\rmH^{1}_{\ast}(\cE)$ is generated in degree at most $-2.$  We show this is the best possible statement by exhibiting for each $s \geq 2$ a $\delta-$Ulrich sheaf $\cE_{s}$ such that $\rmH^{1}_{\ast}(\cE_{s})$ has a generator in degree $-s.$  Consider the simplest of the varieties discussed in Remark \ref{rmk:lin-det-examples}, i.e. a smooth quadric surface $X \subseteq \mathbb{P}^{3}.$  Let $L_{1}, L_{2}$ be the line classes which generate ${\rm Pic}(X).$  Then $H=L_{1}+L_{2}$ and $H^{\perp}$ is generated by $L_{1}-L_{2}.$  For each $s \in \mathbb{Z},$ the line bundle $\cE_{s} := \cO_{X}(sL_{1}+(1-s)L_{2})$ is $\delta$-Ulrich, and fails to be Ulrich precisely when $s \neq 0,1.$  For $s \geq 2$ and $k \in \mathbb{Z}$ we have
\[
h^{1}(\cE_{s}(-s+k)) = h^{1}(\cO_{\mathbb{P}^{1}}(k) \boxtimes \cO_{\mathbb{P}^{1}}(1-2s+k)) = \begin{cases} (k+1)(2s-k-2), & 0 \leq k \leq 2s-3 \\ 0 & \mbox{otherwise} \end{cases}
\]
\end{rmk}

\bibliographystyle{alpha}
\bibliography{ulrich}

\end{document}